\documentclass[12pt,reqno]{amsart}
\usepackage{amsmath,amsfonts,amsthm,amsopn,amssymb,mathrsfs,enumerate,color}
\usepackage{amsmath}
\usepackage{comment}

\usepackage{cite,marginnote}
\usepackage{mathtools} % ��Ҫ���� mathtools ���

\usepackage{color,graphicx,enumerate}
\usepackage[colorlinks=true,urlcolor=blue,
citecolor=red,linkcolor=blue,linktocpage,pdfpagelabels,
bookmarksnumbered,bookmarksopen]{hyperref}
\usepackage[english]{babel}

\usepackage[left=2.9cm,right=2.9cm,top=2.8cm,bottom=2.8cm]{geometry}
\numberwithin{equation}{section}
\makeindex
\newtheorem{theorem}{Theorem}[section]

\newtheorem{corollary}{Corollary}[section]

\newtheorem{proposition}{Proposition}[section]
\newtheorem{lemma}{Lemma}[section]

\newtheorem{iteration lemma}{iteration Lemma}[section]

\usepackage{amsmath,amssymb,amsthm,mathtools}
\usepackage{hyperref}

\usepackage{tikz}
\usetikzlibrary{arrows.meta,decorations.pathreplacing,positioning}

\theoremstyle{remark}

\title[Stability for Affine Sobolev Inequality]{Sharp Quantitative Stability for the Affine \(p\)-Sobolev Inequality, Part II: The Case \(1<p<2\)}

\author[S.~Fan]{Song Fan}
\author[G-D.~Li]{Gui-Dong Li}
\author[J.~J.~Zhang]{Jianjun Zhang}
	
\address[S.~Fan]{\newline\indent School  of  Mathematics  and  Statistics
\newline\indent
Guizhou University
\newline\indent
Guiyang, 550025, Guizhou, PR China}
\email{\href{mailto:doraemonsong77@gmail.com}{doraemonsong77@gmail.com}}

\address[G-D.~Li]{\newline\indent School  of  Mathematics  and  Statistics
\newline\indent
Guizhou University
\newline\indent
Guiyang, 550025, Guizhou, PR China}
\email{\href{mailto:bestdong123@163.com}{bestdong123@163.com}}

\address[J.~J.~Zhang]{\newline\indent College of Mathematics and Statistics
\newline\indent
Chongqing Jiaotong University
\newline\indent
Xuefu, Nan'an, 400074, Chongqing, PR China}
\email{\href{mailto:zhangjianjun09@tsinghua.org.cn}{zhangjianjun09@tsinghua.org.cn}}

\begin{document}

\begin{abstract}

  We prove a sharp quantitative stability  result for the affine \(L^p\)-Sobolev inequality, for
  \(1<p<2\), introduced by Lutwak--Yang--Zhang (\emph{J. Differential Geom.}, \textbf{62} (2002),
  17--38). Moreover, the stability exponent is shown to be optimal, and equal to \(2\).

  \vskip0.23in

  \noindent{\it   {\bf Key  words:} Sharp quantitative stability, Affine \(L^p\) Sobolev inequality, Optimal stability exponent.}

  \vskip0.1in \noindent{\it  {\bf 2020 Mathematics Subject Classification:} Primary 46E35; Secondary
  26D10, 35A23.}

%\vskip0.1in
%\noindent{\it  {\bf 2010 Mathematics Subject Classification:}} 35A16, 35B40, 35A02, 35J60, 46N50.
\end{abstract}

\maketitle

\section{Introduction}

Let \(1<p<n\), \(p^*=np/(n-p)\), and \(p'=p/(p-1)\). We work with real-valued functions in the
homogeneous Sobolev space \(\dot W^{1,p}(\mathbb R^n)\), identified with the functions in
\(L^{p^*}(\mathbb R^n)\) whose distributional gradient belongs to \(L^p(\mathbb R^n)\). The sharp
Sobolev inequality \cite{Aubin76,Talenti76} is
\begin{equation}
  \label{eq:classical-Sobolev}
  S_{n,p}^p\|u\|_{L^{p^*}(\mathbb R^n)}^p \le \|\nabla u\|_{L^p(\mathbb R^n)}^p, \qquad u\in\dot W^{1,p}(\mathbb R^n).
\end{equation}
The nonzero extremals are obtained from \(U(x)=(1+|x|^{p'})^{-(n-p)/p}\) by multiplication by a nonzero
constant, translation, and dilation. We fix this choice of \(U\) throughout.

Brezis and Lieb \cite{BL85} raised the question of remainder terms in the Sobolev inequality. For
\(p=2\) and \(n\ge3\), Bianchi and Egnell \cite{BE91} proved that the deficit controls the square of the
distance to the extremal manifold in the gradient norm. For general \(1<p<n\), Cianchi, Fusco, Maggi,
and Pratelli \cite{CFMP09} obtained a quantitative estimate in the \(L^{p^*}(\mathbb R^n)\)-distance to
the extremal manifold. Figalli and Neumayer \cite{FN19} established gradient stability for \(2\le p<n\),
and Neumayer \cite{Neumayer20} obtained strong-form stability throughout \(1<p<n\). Figalli and Zhang
\cite{FZ22} determined the optimal exponent \(\max\{2,p\}\) in the relative gradient distance.  In the
Hilbertian case \(p=2\), Dolbeault, Esteban, Figalli, Frank, and Loss \cite{DEFFL25} obtained explicit
stability constants with optimal dimensional dependence.

At the endpoint \(p=1\), Fusco, Maggi, and Pratelli \cite{FMP07} proved quantitative stability for the
\(BV\)-Sobolev inequality, whose extremals are multiples of characteristic functions of balls. Figalli,
Maggi, and Pratelli \cite{FMP13} treated the anisotropic \(BV\)-Sobolev and logarithmic Sobolev
inequalities, using a distance that also contains a truncated variation term. These endpoint distances
differ from the gradient distance for \(1<p<n\).

For the Hilbertian fractional Sobolev inequality of order \(0<s<n/2\), Chen, Frank, and Weth
\cite{CFW13} proved a remainder estimate in the square of the homogeneous fractional Sobolev distance.
K\"onig \cite{Konig23} showed that the best global stability constant is strictly smaller than the
constant obtained from the local spectral gap. For \(n\ge3\) and \(0<s<1\), Chen, Lu, and Tang
\cite{CLT25} obtained explicit lower bounds with optimal asymptotic dependence on the dimension and the
fractional order. They also proved a global logarithmic Sobolev stability estimate on the sphere.

For related interpolation inequalities, Carlen and Figalli \cite{CF13} proved stability for the planar
Gagliardo--Nirenberg--Sobolev inequality involving \(\|u\|_{L^6(\mathbb R^2)}\), \(\|\nabla
u\|_{L^2(\mathbb R^2)}\), and \(\|u\|_{L^4(\mathbb R^2)}\), and for the logarithmic
Hardy--Littlewood--Sobolev inequality, with an application to the critical-mass Keller--Segel equation.
Nguyen \cite{Nguyen19} treated the Del~Pino--Dolbeault inequalities interpolating \(L^{2t}\) between
\(\dot H^1\) and \(L^{t+1}\), for \(n\ge2\) and \(1<t<(2n+1)/(2n-3)\). Zhang and Zhang \cite{ZZ25}
extended the superlinear range to \(1<t<n/(n-2)\) when \(n\ge3\), and to all \(t>1\) when \(n=2\), and
also treated the corresponding sublinear family \(1/2<t<1\). Here the gradient remains in \(L^2\); the
parameter \(t\) specifies the Lebesgue exponents. For the ordinary Hardy--Littlewood--Sobolev
inequality, Chen, Lu, and Tang \cite{CLT24} proved stability with explicit positive lower bounds for the
constant in the diagonal case, measured by the squared \(L^{2n/(n+2s)}(\mathbb R^n)\)-distance,
\(0<s<n/2\).

For the classical half-space trace inequality with \(p=2\) and \(n\ge3\), Ho \cite{Ho22} proved a
Bianchi--Egnell estimate in the gradient norm. Zhang, Zhou, and Zou \cite{ZZZ25} established quadratic
stability for the Hilbertian fractional trace inequality: for a trace of codimension \(m\), their range
is \(m/2<\alpha<n/2\), and the distance is measured in \(\dot H^\alpha\). Ma, Zhang, and Zhou
\cite{MZZ26} obtained gradient stability for the classical half-space trace inequality when \(n\ge3\)
and \(1<p<n\), with optimal exponent \(\max\{2,p\}\).

The affine Sobolev inequality strengthens \eqref{eq:classical-Sobolev}. Zhang \cite{Zhang99} proved the
endpoint inequality for \(p=1\). For \(1<p<n\), Lutwak, Yang, and Zhang \cite{LYZ02} deduced the sharp
affine \(L^p\)-Sobolev inequality from the \(L^p\) affine isoperimetric inequalities in \cite{LYZ00}.
For \(u\in\dot W^{1,p}(\mathbb R^n)\) and \(\xi\in\mathbb S^{n-1}\), define
\[
  A_\xi(u)=\int_{\mathbb R^n}|\partial_\xi u|^p\,dx, \qquad \partial_\xi u=\xi\cdot\nabla u,
\]
and write
\[
  \langle g\rangle_{\mathbb S^{n-1}} = \frac1{|\mathbb S^{n-1}|}\int_{\mathbb S^{n-1}}g(\xi)\,d\sigma(\xi), \qquad m_{n,p}=\bigl\langle|\omega_1|^p\bigr\rangle_{\omega\in\mathbb S^{n-1}}.
\]
The affine energy is
\begin{equation}
  \label{eq:def-E}
  \mathcal E_{\rm aff,p}(u) = m_{n,p}^{-1/p} \left\langle A_\xi(u)^{-n/p}\right\rangle_{\mathbb S^{n-1}}^{-1/n}.
\end{equation}
With the normalization in \eqref{eq:def-E}, one has
\begin{equation}
  \label{eq:affine-Sobolev}
  S_{n,p}^p\|u\|_{L^{p^*}(\mathbb R^n)}^p \le \mathcal E_{\rm aff,p}(u)^p \le \|\nabla u\|_{L^p(\mathbb R^n)}^p.
\end{equation}
If \(u\) is radial, then \(\mathcal E_{\rm aff,p}(u)=\|\nabla u\|_{L^p(\mathbb R^n)}\). By
\eqref{eq:affine-Sobolev}, the affine deficit is bounded above by the classical Sobolev deficit.
Classical Sobolev stability therefore does not directly imply the estimate sought here. The extremal
family also contains nonorthogonal linear deformations of \(U\).

The equality cases in the first inequality of \eqref{eq:affine-Sobolev} are the affine images of the
Aubin--Talenti functions; see \cite{LYZ02} and \cite{HJM16}. Including zero, the set of extremals is
\[
  \mathcal M_{\rm aff} = \left\{ c\,\lambda^{\frac{n-p}{p}} U\bigl(\lambda A(\,\cdot-x_0)\bigr): c\in\mathbb R,\ \lambda>0,\ x_0\in\mathbb R^n,\ A\in SL(n) \right\}.
\]
For \(\lambda>0\), \(A\in SL(n)\), and \(x_0\in\mathbb R^n\), set
\[
  (T_{\lambda A,x_0}u)(x) = \lambda^{-\frac{n-p}{p}} u\bigl(\lambda^{-1}A^{-1}x+x_0\bigr).
\]
These transformations preserve the \(L^{p^*}(\mathbb R^n)\)-norm and the affine energy. The gradient
norm is invariant under translations and Sobolev dilations, but generally changes under \(A\in SL(n)\).
We therefore measure the gradient error after optimizing over these transformations.

Several extensions of the sharp affine inequality are known. Haberl and Schuster \cite{HS09} proved
asymmetric affine \(L^p\)-Sobolev inequalities using positive directional derivatives. Haberl, Schuster,
and Xiao \cite{HSX12} established the asymmetric affine P\'olya--Szeg\H{o} principle. Cianchi, Lutwak,
Yang, and Zhang \cite{CLYZ09} proved affine Moser--Trudinger inequalities at \(p=n\) and affine
Morrey--Sobolev inequalities for \(p>n\). Haddad, Jim\'enez, and Montenegro \cite{HJM16} gave a proof of
\eqref{eq:affine-Sobolev} based on the \(L^p\) Busemann--Petty centroid inequality, and later proved
weighted affine inequalities on a half-space with a monomial weight \cite{HJM19}. De~N\'apoli, Haddad,
Jim\'enez, and Montenegro \cite{DHJM18} treated the sharp affine \(L^2\) trace inequality and its
\(L^p\) variants. Haddad and Ludwig \cite{HL24} proved affine fractional \(L^p\)-Sobolev inequalities
and asymmetric versions. These results establish sharp inequalities and their affine extensions; the
quantitative stability question requires an additional estimate for the deficit.

\subsection*{Main result and comparison with earlier work}

The earlier quantitative affine results concern different deficits or distances. Wang \cite{Wang13}
proved a quantitative affine P\'olya--Szeg\H{o} principle in terms of \(L^1(\mathbb R^n)\)-distance to
ellipsoidal rearrangements. The estimate assumes support of finite measure and includes an additional
term measuring the region where the gradient is small. Nguyen \cite[Theorem~1.2]{Nguyen16} proved
stability for the endpoint affine Sobolev inequality on \(BV\), in terms of a power of the normalized
\(L^{n/(n-1)}(\mathbb R^n)\)-distance to multiples of ellipsoid indicators; the exponent in that
estimate is not optimal. Neither result gives the gradient-distance estimate considered here for
\(1<p<2\). In \cite{FLZ26A}, we proved sharp stability for the affine fractional \(L^2\)-Sobolev
inequality. For the local inequality \eqref{eq:affine-Sobolev}, we use the distance
\begin{equation}
  \label{eq:affine-distance}
  d_{\rm aff}(u,\mathcal M_{\rm aff}) = \inf_{\substack{a\in\mathbb R,\ A\in SL(n), \\
  \lambda>0,\ x_0\in\mathbb R^n}} \frac{ \left\| \nabla\left(T_{\lambda A,x_0}u-aU\right) \right\|_{L^p(\mathbb R^n)} }{ \left\| \nabla T_{\lambda A,x_0}u \right\|_{L^p(\mathbb R^n)} },
\end{equation}
for \(u\in\dot W^{1,p}(\mathbb R^n)\setminus\{0\}\). The distance is invariant under \(T_{\lambda
A,x_0}\) and multiplication by a nonzero constant. Taking \(a=0\) gives \(0\le d_{\rm aff}\le1\).

In Part~I \cite{FLZ26B}, we proved
\[
  \frac{\mathcal E_{\rm aff,p}(u)} {S_{n,p}\|u\|_{L^{p^*}(\mathbb R^n)}} -1 \ge c_{n,p}\, d_{\rm aff}(u,\mathcal M_{\rm aff})^p, \qquad 2\le p<n,
\]
with optimal exponent \(p\). In a concurrent and independent work, Frank, Li, and Yang
\cite[Theorem~1.1]{FLY26} proved, for \(2\le p<n\), the homogeneous estimate
\[
  \begin{aligned}
    &\mathcal E_{\rm aff,p}(u)^p-S_{n,p}^p\|u\|_{L^{p^*}(\mathbb R^n)}^p \\
    &\quad\ge c_{n,p} \inf_{\substack{v\in\mathcal M_{\rm aff}\setminus\{0\} \\
    B\in SL(n)}} \left\{ \|\nabla(u\circ B-v)\|_{L^p(\mathbb R^n)}^p +\int_{\mathbb R^n}|\nabla v|^{p-2} |\nabla(u\circ B-v)|^2\,dx \right\}.
  \end{aligned}
\]
They also proved quantitative stability estimates for critical points of the affine Sobolev functional.

In the present paper, we prove the corresponding gradient-distance estimate for \(1<p<2\). In this
range, the optimal exponent is \(2\). Thus the optimal exponent agrees with that in the classical
Sobolev inequality proved by Figalli--Zhang \cite{FZ22}.

Our main result is the following.

\begin{theorem}
  \label{thm:intro-main-affine-p<2}
  Let \(n\ge 2\) and \(1<p<2\). There exists \(c_{n,p}>0\) such that every \(u\in\dot W^{1,p}(\mathbb
  R^n)\setminus\{0\}\) satisfies
  \begin{equation}
    \label{eq:intro-main-affine-p<2}
    \frac{\mathcal E_{\rm aff,p}(u)} {S_{n,p}\|u\|_{L^{p^*}(\mathbb R^n)}}-1 \ge c_{n,p}\,d_{\rm aff}(u,\mathcal M_{\rm aff})^2.
  \end{equation}
  The exponent \(2\) is optimal.
\end{theorem}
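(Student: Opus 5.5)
The plan is the standard two-regime argument: a global compactness step reduces to functions close to \(\mathcal M_{\rm aff}\), and a local second-order expansion of the affine deficit near \(U\) handles the rest. The exponent \(2\) enters only through the Figalli--Zhang-type coercivity for \(1<p<2\).

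\emph{Normalization and reduction to the local regime.} By homogeneity and the invariance of both sides under \(T_{\lambda A,x_0}\), we may assume \(\|u\|_{L^{p^*}}=\|U\|_{L^{p^*}}\). Since \(d_{\rm aff}\le 1\), it suffices to show that \(\delta(u):=\mathcal E_{\rm aff,p}(u)/(S_{n,p}\|u\|_{L^{p^*}})-1\) is bounded below by \(c\,d_{\rm aff}^2\) when \(\delta\) is small. The first step is the qualitative statement: if \(\delta(u_k)\to0\), then \(d_{\rm aff}(u_k,\mathcal M_{\rm aff})\to0\). To prove it, use John's position (or the \(L^p\) centroid/projection body of \(u\)) to choose \(A_k\in SL(n)\) so that \(v_k=T_{A_k}u_k\) is in an isotropic position, meaning \(A_\xi(v_k)\) is comparable to \(\|\nabla v_k\|_p^p\) uniformly in \(\xi\). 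In this position \(\mathcal E_{\rm aff,p}(v_k)\simeq\|\nabla v_k\|_p\), so \(v_k\) is bounded in \(\dot W^{1,p}\). A concentration-compactness argument for the affine functional then applies; it uses the affine P\'olya--Szeg\H{o} principle and the characterization of equality in \eqref{eq:affine-Sobolev}. Up to dilation and translation it gives \(v_k\to aU\circ B\) strongly in \(\dot W^{1,p}\), which yields \(d_{\rm aff}\to0\).

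\emph{Local expansion.} Near \(\mathcal M_{\rm aff}\), choose an optimal \((a,\lambda,A,x_0)\) in \eqref{eq:affine-distance}; existence follows from the compactness above. After transforming, write \(u=U+\varphi\) with \(\|\nabla\varphi\|_p=\varepsilon\) small. Optimality gives orthogonality conditions of \(\varphi\) against the tangent space of \(\mathcal M_{\rm aff}\) at \(U\): constants, dilations, translations, and the traceless linear deformations \(x\mapsto Mx\cdot\nabla U\) with \(\operatorname{tr}M=0\). These conditions hold in the weighted sense \(\int|\nabla U|^{p-2}\nabla U\cdot\nabla(\cdot)\) or the corresponding \(L^{p^*}\) sense. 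Expand \(\mathcal E_{\rm aff,p}(U+\varphi)^p\). Since \(U\) is radial, \(A_\xi(U)\) is constant in \(\xi\), and
\[
A_\xi(U+\varphi)=A_\xi(U)+p\!\int|\partial_\xi U|^{p-2}\partial_\xi U\,\partial_\xi\varphi+Q_\xi(\varphi)+R_\xi.
\]
Averaging \(A_\xi^{-n/p}\) over the sphere produces \(\|\nabla(U+\varphi)\|_p^p\) minus a nonnegative variance term. This variance is the variance in \(\xi\) of the first-order linear functionals \(L_\xi(\varphi)=\int|\partial_\xi U|^{p-2}\partial_\xi U\,\partial_\xi\varphi\). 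The second-order inequality I aim for is
\[
\delta(u)\gtrsim \int (|\nabla U|+|\nabla\varphi|)^{p-2}|\nabla\varphi|^2 - \kappa\,\mathrm{Var}_\xi L_\xi(\varphi) -o(\cdot),
\]
combined with the classical Figalli--Zhang spectral gap for \(1<p<2\). That gap controls \(\int(|\nabla U|+|\nabla\varphi|)^{p-2}|\nabla\varphi|^2\) by the Sobolev deficit, with a strict gap on the complement of the classical tangent space.

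\emph{Main obstacle.} The key point is that the variance term, the only place where the affine deficit is smaller than the classical one, sees only the projection of \(\varphi\) onto the \(n(n+1)/2-1\) dimensional space of traceless symmetric deformations \(x\cdot M\nabla U\). Indeed, \(L_\xi(\varphi)\) is a quadratic form in \(\xi\), and only its traceless part contributes to the variance. On exactly these directions we have imposed orthogonality by optimizing over \(A\in SL(n)\); the antisymmetric part is absorbed by rotational invariance. Hence the variance is a higher-order error, bounded by \(o(\int(|\nabla U|+|\nabla\varphi|)^{p-2}|\nabla\varphi|^2)\). Making this rigorous for \(1<p<2\) is the hardest step, because the weight \(|\nabla U|^{p-2}\) is singular at the origin and the expansion of \(|\partial_\xi(U+\varphi)|^p\) must be done with the Figalli--Zhang pointwise inequality \(|a+b|^p\ge|a|^p+p|a|^{p-2}ab+c(|a|+|b|)^{p-2}b^2\), uniformly in \(\xi\). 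I would first try to apply this inequality directionwise and integrate over \(\xi\) before averaging \(A_\xi^{-n/p}\). Finally, \(\int(|\nabla U|+|\nabla\varphi|)^{p-2}|\nabla\varphi|^2\gtrsim\varepsilon^2\) by Hölder, since \(p<2\), which gives \eqref{eq:intro-main-affine-p<2}.

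\emph{Optimality of the exponent.} Take \(u=U+\varepsilon\psi\) with \(\psi\) smooth and orthogonal to the tangent space of \(\mathcal M_{\rm aff}\), and radial so that \(\mathcal E_{\rm aff,p}=\|\nabla\cdot\|_p\). Then \(\delta(u)=O(\varepsilon^2)\) by a second-order Taylor expansion, which is valid because \(p>1\) and \(\psi\) is supported where \(\nabla U\ne0\). Meanwhile \(d_{\rm aff}\sim\varepsilon\), so no exponent smaller than \(2\) can hold.
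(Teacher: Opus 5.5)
Your global architecture (qualitative compactness after an affine normalization, then a local analysis at $U$) matches the paper's. The local step, however, rests on a claim that is false for $1<p<2$.

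\textbf{The variance term is not higher order.} You assert that $L_\xi(\varphi)$ is a quadratic form in $\xi$, so that $\operatorname{Var}_\xi L_\xi(\varphi)$ sees only the traceless deformations $x\cdot B\nabla U$ and becomes a higher-order error once orthogonality is imposed. But the integrand $|\partial_\xi U|^{p-2}\partial_\xi U\,\partial_\xi\varphi$ is homogeneous of degree $p$ in $\xi$. It is a quadratic polynomial only when $p=2$.

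The Funk--Hecke coefficients of $t\mapsto|t|^p$ are nonzero in every even degree when $p\notin 2\mathbb N$; for $n=2$, $d_{2k,p}/d_{0,p}=(-1)^k(-\frac p2)_k/(1+\frac p2)_k$. Hence $\mathcal R_p$ subtracts a nonzero term $\beta_{2k,p}R_{2k}(f_{2k,m})^2$ from $Q_{\rm Sob,p}$ on every even angular mode. This is a genuine order-$\varepsilon^2$ contribution with the fixed coefficient $\alpha_{n,p}(\tau+1)/(2A_0)$. Finitely many orthogonality conditions cannot remove it.

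Orthogonality does not kill it even in degree two. There the variance involves $R_2(f)=\frac1p Q^{(2)}_{\rm Sob,p}(h,f)$ with $h=rU'$. This functional is not a multiple of the pairing $f\mapsto\int U^{p^*-2}hf$ fixed by modulation; after integration by parts their radial weights already differ near $r=0$. The paper handles this mode through the Schur-complement identity $Q^{(2)}_{\rm aff,p}(f)=Q^{(2)}_{\rm Sob,p}(f)-Q^{(2)}_{\rm Sob,p}(f,h)^2/Q^{(2)}_{\rm Sob,p}(h)$.

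\textbf{What is missing.} For the same reason, you cannot combine the classical Figalli--Zhang estimate with the variance term. That estimate bounds the classical deficit below with an unquantified constant, while the variance enters with a fixed one. On $x\cdot B\nabla U$ the two cancel exactly, since there $Q_{\rm Sob,p}>0=Q_{\rm aff,p}$.

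What you need is a spectral analysis of $Q_{\rm aff,p}=Q_{\rm Sob,p}-\mathcal R_p$ itself:
\begin{enumerate}
\item The kernel identity $\ker_{\mathcal H_U}Q_{\rm aff,p}=T_U\mathcal M_{\rm aff}$. This rests on the quantitative bound $\beta_{2k,p}R_{2k}(f)^2\le\nu_k Q^{(2k)}_{\rm Sob,p}(f)$ with $\nu_1=1$ and $\nu_k<1$ for $k\ge2$.
\item The resulting gap $Q_{\rm aff,p}\ge c_{\rm sg}\|\cdot\|_{\mathcal H_U}^2$ on the complement of $T_U\mathcal M_{\rm aff}$.
\item A nonlinear version that keeps sharp constants. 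In the paper this uses the factor $(1-\kappa)$ on the exact second-order term in each direction, rather than the crude inequality with a constant $c$, together with the sharp $L^{p^*}$ expansion. It also uses the two-scale compactness argument of Section~\ref{sec:nonlinear-coercivity}, which is needed because $\int|\nabla U|^{p-2}|\nabla\varphi|^2$ can be infinite for $\varphi\in\dot W^{1,p}$.
\end{enumerate}

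\textbf{Two smaller points.} First, for $p<2$, optimizing the $\dot W^{1,p}$ distance yields nonlinear conditions, not the weighted orthogonality $\int U^{p^*-2}Z\varphi\,dx=0$ that the spectral gap uses. A separate modulation step is required. Second, in your sharpness example the bound $d_{\rm aff}(u_\varepsilon,\mathcal M_{\rm aff})\gtrsim\varepsilon$ needs proof, since $d_{\rm aff}$ also optimizes over $SL(n)$.
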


By Theorem~\ref{thm:intro-main-affine-p<2} and Part~I \cite{FLZ26B}, one has the following corollary.

\begin{corollary}
  Let \(1<p<n\). There exists \(c_{n,p}>0\) such that every \(u\in\dot W^{1,p}(\mathbb
  R^n)\setminus\{0\}\) satisfies
  \[
    \frac{\mathcal E_{\rm aff,p}(u)} {S_{n,p}\|u\|_{L^{p^*}(\mathbb R^n)}}-1 \ge c_{n,p}\, d_{\rm aff}(u,\mathcal M_{\rm aff})^{\max\{2,p\}}.
  \]
  The exponent \(\max\{2,p\}\) is optimal.
\end{corollary}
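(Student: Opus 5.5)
The plan is to derive the corollary by splitting the range \(1<p<n\) at \(p=2\) and invoking, in each subrange, the estimate already available there, together with its optimality statement. No new analysis is required. The only points needing care are that the hypotheses of the two source results cover the whole range, and that ``optimal exponent'' has the same meaning in both.

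First consider \(1<p<2\). Since \(p>1\) and \(p<n\), we have \(n\ge2\), so Theorem~\ref{thm:intro-main-affine-p<2} applies. It gives \eqref{eq:intro-main-affine-p<2} with exponent \(2=\max\{2,p\}\), and it states that \(2\) is optimal.

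Now consider \(2\le p<n\). We invoke the estimate of Part~I \cite{FLZ26B} quoted in the introduction. It has the same left-hand side and the same distance \(d_{\rm aff}\) from \eqref{eq:affine-distance}, and its exponent is \(p=\max\{2,p\}\). Part~I also establishes that \(p\) is optimal in this range. At the overlap point \(p=2\) the two exponents coincide, so there is no conflict.

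Taking \(c_{n,p}\) to be the constant from whichever source result applies gives the inequality for every \(1<p<n\).

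For optimality, fix \(p\) and suppose the inequality held with some exponent \(\beta<\max\{2,p\}\) in place of \(\max\{2,p\}\). In the case \(1<p<2\) this contradicts the optimality part of Theorem~\ref{thm:intro-main-affine-p<2}. In the case \(p\ge2\) it contradicts the optimality statement of Part~I.

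Since \(0\le d_{\rm aff}\le1\), an estimate with exponent \(\gamma\) implies one with any larger exponent. Hence ``optimal'' means precisely that no smaller exponent works, which is what both sources assert. Concretely, both sources exhibit a family \(u_\varepsilon\to U\) with \(d_{\rm aff}(u_\varepsilon,\mathcal M_{\rm aff})\to0\) and deficit \(O(d_{\rm aff}(u_\varepsilon,\mathcal M_{\rm aff})^{\max\{2,p\}})\).

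The only potential obstacle is bookkeeping: we must check that the normalization in Part~I (the ratio \(\mathcal E_{\rm aff,p}(u)/(S_{n,p}\|u\|_{L^{p^*}(\mathbb R^n)})-1\), and the same \(d_{\rm aff}\)) matches the present one exactly. As quoted in the introduction, it does, so the corollary follows directly.
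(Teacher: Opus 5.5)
Your proposal is correct and is the paper's argument: the corollary is obtained directly by combining Theorem~\ref{thm:intro-main-affine-p<2} (exponent \(2\), optimal) for \(1<p<2\) with the Part~I estimate \cite{FLZ26B} (exponent \(p\), optimal) for \(2\le p<n\). Both results use the same normalized deficit and the same distance \(d_{\rm aff}\). Your check that \(p<n\) forces \(n\ge2\) in the first range is the only hypothesis that needed verifying.
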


\subsection*{Strategy of the proof}

We first prove
\begin{equation}
  \label{eq:strategy-local-bound}
  \mathcal E_{\rm aff,p}(U+\varepsilon\phi)^p -S_{n,p}^p\|U+\varepsilon\phi\|_{L^{p^*}(\mathbb R^n)}^p \ge c\varepsilon^2
\end{equation}
for \(0<\varepsilon\le\varepsilon_0\), \(\|\nabla\phi\|_{L^p(\mathbb R^n)}=1\), and \(\phi\perp
T_U\mathcal M_{\rm aff}\). The constants \(c,\varepsilon_0>0\) depend only on \(n,p\). The notation
\(\phi\perp T_U\mathcal M_{\rm aff}\) means
\[
  \int_{\mathbb R^n}U^{p^*-2}Z\phi\,dx=0 \qquad \text{for every }Z\in T_U\mathcal M_{\rm aff}.
\]

For each fixed smooth, compactly supported \(\phi\) that is constant near the origin, the second
variation gives
\[
  \mathcal E_{\rm aff,p}(U+\varepsilon\phi)^p -S_{n,p}^p\|U+\varepsilon\phi\|_{L^{p^*}(\mathbb R^n)}^p =\frac p2\varepsilon^2Q_{\rm aff,p}(\phi) +o(\varepsilon^2).
\]
For \(n\ge3\), we verify that the proof of \cite[Section~5]{FLZ26B} applies to \(1<p<2\) and yields
\(\ker_{\mathcal H_U}Q_{\rm aff,p} =T_U\mathcal M_{\rm aff}\). For \(n=2\), we prove the nondegeneracy
using Fourier series in Appendix~\ref{app:kernel-affine-Hessian-n=2}.
Proposition~\ref{prop:affine-spectral-gap} then gives
\[
  Q_{\rm aff,p}(\phi) \ge c_{\rm sg}\int_{\mathbb R^n} |\nabla U|^{p-2}|\nabla\phi|^2\,dx
\]
for every \(\phi\in\mathcal H_U\) with \(\phi\perp T_U\mathcal M_{\rm aff}\). The space \(\mathcal H_U\)
is defined in \eqref{eq:def-HU-low}.

The assumption \(\phi\in\dot W^{1,p}\) does not ensure that \(\int_{\mathbb R^n}|\nabla
U|^{p-2}|\nabla\phi|^2\,dx\) is finite. Also, the expansion for a fixed \(\phi\) does not give constants
valid for all \(\phi\) with \(\|\nabla\phi\|_{L^p(\mathbb R^n)}=1\). We therefore estimate
\(\|\partial_\xi(U+\varepsilon\phi)\|_{L^p(\mathbb R^n)}^p\) using \cite[Lemma~2.1]{FZ22}. The
inequality includes the nonnegative term
\[
  \min\left\{ \varepsilon^p|\partial_\xi\phi|^p,\, \varepsilon^2|\partial_\xi U|^{p-2} |\partial_\xi\phi|^2 \right\}.
\]
Its integral is finite because the minimum is bounded by \(\varepsilon^p|\partial_\xi\phi|^p\). Applying
the directional estimates in the formula for \(\mathcal E_{\rm aff,p}\) gives
Lemma~\ref{lem:affine-nonlinear-lower-p<2}.

We next estimate \(\|U+\varepsilon\phi\|_{L^{p^*}(\mathbb R^n)}^p\). For \(p^*\le2\), we use
\cite[Lemma~2.4(i)]{FZ22} in terms of \(\mathcal P_\varepsilon(\phi)\), defined in
\eqref{sec2edit:def-Pepsilon}. This estimate remains applicable when \(\int_{\mathbb
R^n}U^{p^*-2}\phi^2\,dx\) is infinite. For \(p^*>2\), we use \cite[Lemma~2.4(ii)]{FZ22}. The estimate
involves \(\int_{\mathbb R^n}U^{p^*-2}\phi^2\,dx\), with an arbitrarily small increase in its
coefficient, and an error of order \(\varepsilon^{p^*}\). Lemma~\ref{lem:compact-embedding-high-p} gives
the convergence of these weighted integrals along weakly convergent sequences in \(\dot W^{1,p}\). The
error \(\varepsilon^{p^*}\) is \(o(\varepsilon^2)\).

To establish \eqref{eq:strategy-local-bound}, we argue by contradiction in
Section~\ref{sec:nonlinear-coercivity}. We rescale the perturbations as in
\eqref{eq:coercivity-normalization-low}, so that the integral in \eqref{eq:two-scale-normalization-low}
equals one. Lemma~\ref{lem:averaged-affine-lsc-low-p} shows that the weak limit belongs to \(\mathcal
H_U\). The weighted orthogonality passes to the limit, and Proposition~\ref{prop:affine-spectral-gap}
forces the weak limit to be zero. The estimates for the affine energy and the \(L^{p^*}(\mathbb
R^n)\)-norm, together with \eqref{eq:averaged-R-lower}, then imply that the integral in
\eqref{eq:two-scale-normalization-low} tends to zero. This contradicts the normalization.

Finally, if the global inequality failed, we could choose \(u_k\) such that
\[
  \|u_k\|_{L^{p^*}(\mathbb R^n)}=1, \qquad \frac{\mathcal E_{\rm aff,p}(u_k)}{S_{n,p}}-1 < \frac1k\,d_{\rm aff}(u_k,\mathcal M_{\rm aff})^2.
\]
Since \(d_{\rm aff}\le1\), the affine deficit tends to zero.
Proposition~\ref{prop:qualitative-affine-compactness-low-p} and Lemma~\ref{lem:affine-modulation-low-p}
allow us, after affine transformations and multiplication by constants, to write the functions as
\(U+\varepsilon_k\phi_k\), where
\[
  \varepsilon_k\to0,\qquad \|\nabla\phi_k\|_{L^p(\mathbb R^n)}=1,\qquad \phi_k\perp T_U\mathcal M_{\rm aff}.
\]
The definition of the affine distance gives \(d_{\rm aff}(u_k,\mathcal M_{\rm aff})\le C\varepsilon_k\).
By \eqref{eq:strategy-local-bound} and invariance of the normalized deficit,
\[
  \frac{\mathcal E_{\rm aff,p}(u_k)}{S_{n,p}}-1 \ge c\varepsilon_k^2 \ge c'\,d_{\rm aff}(u_k,\mathcal M_{\rm aff})^2.
\]
This contradicts the choice of \(u_k\) for large \(k\).

The paper is organized as follows. Section~\ref{sec:local-affine-expansion} derives the nonlinear lower
bounds for \(A_\xi(U+\varepsilon\varphi)\) and the identity \(Q_{\rm aff,p}=Q_{\rm Sob,p}-\mathcal
R_p\). Section~\ref{sec:nonlinear-coercivity} proves \(\ker_{\mathcal H_U}Q_{\rm aff,p}=T_U\mathcal
M_{\rm aff}\), establishes the corresponding spectral gap, and treats separately the ranges \(p^*\le2\)
and \(p^*>2\). Section~\ref{sec:proof-main-theorem} proves Theorem~\ref{thm:intro-main-affine-p<2}. The
case \(n=2\) for the kernel identity is treated in Appendix~\ref{app:kernel-affine-Hessian-n=2}.

\section{\texorpdfstring{Second variation and local estimates for \(1<p<2\)}{Second variation and local estimates for 1<p<2}}
\label{sec:local-affine-expansion}

Throughout this section, \(1<p<2\). Set
\[
  \delta_{\rm aff}(u)=\mathcal E_{\rm aff,p}(u)^p -S_{n,p}^p\|u\|_{L^{p^*}(\mathbb R^n)}^p.
\]
\subsection{The weighted Sobolev space}

Following \cite[Section~3.1]{FZ22}, set
\[
  C_{c,0}^{1}(\mathbb R^n) = \left\{ \phi\in C_c^1(\mathbb R^n): \phi \text{ is constant in a neighborhood of }0 \right\}.
\]
Let \(C_{c,0}^{\infty}(\mathbb R^n) =C_c^\infty(\mathbb R^n)\cap C_{c,0}^{1}(\mathbb R^n)\). We define
\begin{equation}
  \label{eq:def-HU-low}
  \mathcal H_U = \overline{C_{c,0}^{1}(\mathbb R^n)} ^{\|\cdot\|_{\mathcal H_U}}, \qquad \|\phi\|_{\mathcal H_U}^2 = \int_{\mathbb R^n} |\nabla U|^{p-2}|\nabla\phi|^2\,dx.
\end{equation}
The restriction at the origin is needed only for \(1<p\le(n+2)/(n+1)\). For \((n+2)/(n+1)<p<2\),
completing \(C_c^1(\mathbb R^n)\) in the norm in \eqref{eq:def-HU-low} gives the same space.

By \cite[Proposition~3.2]{FZ22},
\[
  \mathcal H_U \hookrightarrow\hookrightarrow L^2(\mathbb R^n;U^{p^*-2}dx).
\]
The weighted Poincar\'e inequality \cite[Lemma~3.3]{FZ22} gives \(c_0=c_0(n,p)>0\) such that
\begin{equation}
  \label{eq:weighted-Poincare-used-low}
  \int_{\mathbb R^n} U^{p^*-2}\phi^2\,dx \le c_0\|\phi\|_{\mathcal H_U}^2, \qquad \phi\in\mathcal H_U.
\end{equation}

The quadratic form associated with the linearized \(p\)-Laplacian satisfies
\begin{equation}
  \label{eq:principal-equivalence-gap-proof}
  (p-1)\|\phi\|_{\mathcal H_U}^2 \le \int_{\mathbb R^n}|\nabla U|^{p-2} \left( |\nabla\phi|^2 +(p-2)\frac{(\nabla U\cdot\nabla\phi)^2}{|\nabla U|^2} \right)\,dx \le \|\phi\|_{\mathcal H_U}^2.
\end{equation}

The tangent space to \(\mathcal M_{\rm aff}\) at \(U\) is
\[
  T_U\mathcal M_{\rm aff} = \operatorname{span} \left\{ U,\ Z_0,\ \partial_{x_1}U,\ldots,\partial_{x_n}U,\ x\cdot B\nabla U: B=B^T,\ \operatorname{tr}B=0 \right\}.
\]
Here \(Z_0=\frac{n-p}{p}U+x\cdot\nabla U\). Following \cite[Definition~3.7]{FZ22}, for \(\phi\in\dot
W^{1,p}(\mathbb R^n)\) we write
\begin{equation}
  \label{eq:orthogonality-convention}
  \phi\perp T_U\mathcal M_{\rm aff} \quad\Longleftrightarrow\quad \int_{\mathbb R^n}U^{p^*-2}Z\phi\,dx=0 \quad\text{for every }Z\in T_U\mathcal M_{\rm aff}.
\end{equation}
The pairings are finite by H\"older's inequality, since \(U^{p^*-2}Z\in L^{(p^*)'}(\mathbb R^n)\) for
each \(Z\in T_U\mathcal M_{\rm aff}\) and \(\phi\in L^{p^*}(\mathbb R^n)\) by the Sobolev inequality.

\subsection{\texorpdfstring{The quadratic form \(Q_{\rm aff,p}\)}{The quadratic form Q aff,p}}

Set \(\tau=n/p\) and define
\begin{equation}
  \label{eq:def-Phi-low-p}
  \Phi(a) = \left\langle a(\xi)^{-\tau} \right\rangle_{\mathbb S^{n-1}}^{-1/\tau}.
\end{equation}
For \(g\in L^2(\mathbb S^{n-1})\), write \(\operatorname{Var}_{\xi}(g) =\langle g^2\rangle_{\mathbb
S^{n-1}} -\langle g\rangle_{\mathbb S^{n-1}}^2\). Then \(\mathcal E_{\rm aff,p}(u)^p
=\alpha_{n,p}\Phi(A_\xi(u))\), where \(\alpha_{n,p}=m_{n,p}^{-1}\). Since \(U\) is radial,
\(A_\xi(U)=A_0\) for every \(\xi\in\mathbb S^{n-1}\). Moreover, \(-\Delta_pU=\Lambda U^{p^*-1}\), where
\(\Lambda=n((n-p)/(p-1))^{p-1}\).

For \(\phi\in\dot W^{1,p}(\mathbb R^n)\), set
\[
  L_\xi(\phi) = p\int_{\mathbb R^n} |\partial_\xi U|^{p-2} \partial_\xi U\,\partial_\xi\phi\,dx, \qquad h_t(\xi) = A_\xi(U+t\phi)-A_0.
\]
H\"older's inequality gives
\begin{equation}
  \label{eq:Lxi-Lp-bound}
  |L_\xi(\phi)| \le pA_0^{(p-1)/p}\|\partial_\xi\phi\|_{L^p(\mathbb R^n)} \le pA_0^{(p-1)/p}\|\nabla\phi\|_{L^p(\mathbb R^n)}.
\end{equation}
For the second variation, we take \(\phi\in C_{c,0}^\infty(\mathbb R^n)\).

The scalar inequality \(\bigl||a+b|^p-|a|^p-p|a|^{p-2}ab\bigr|\le C_p|b|^p\) gives
\[
  \left| h_t(\xi)-tL_\xi(\phi) \right| \le C_p|t|^p \int_{\mathbb R^n} |\partial_\xi\phi|^p\,dx \le C_p|t|^p \|\nabla\phi\|_{L^p(\mathbb R^n)}^p.
\]
Together with \eqref{eq:Lxi-Lp-bound}, this implies \(\|h_t\|_{L^\infty(\mathbb S^{n-1})}=O(|t|)\), and
\[
  \sup_{\xi\in\mathbb S^{n-1}} \left| \frac{h_t(\xi)}{t} - L_\xi(\phi) \right| \to0 \qquad \text{as }t\to0.
\]
In particular,
\begin{equation}
  \label{eq:affine-variance-limit-low-p}
  \operatorname{Var}_{\xi}(h_t) = t^2 \operatorname{Var}_{\xi}\bigl(L_\xi(\phi)\bigr) + o(t^2).
\end{equation}

The spherical mean of \(A_\xi(U+t\phi)\) satisfies
\[
  \left\langle A_\xi(U+t\phi) \right\rangle_{\mathbb S^{n-1}} = m_{n,p} \int_{\mathbb R^n} |\nabla U+t\nabla\phi|^p\,dx.
\]
Since \(|\nabla U|\) is bounded away from zero on \(\operatorname{supp}\nabla\phi\), Taylor's formula
gives
\begin{equation}
  \label{eq:affine-average-limit-low-p}
  \begin{aligned}
    \left\langle h_t \right\rangle_{\mathbb S^{n-1}} &= pm_{n,p}t \int_{\mathbb R^n} |\nabla U|^{p-2} \nabla U\cdot\nabla\phi\,dx \\
    &\quad+ \frac{pm_{n,p}}2t^2 \int_{\mathbb R^n} |\nabla U|^{p-2} \left( |\nabla\phi|^2 + (p-2) \frac{ (\nabla U\cdot\nabla\phi)^2 }{ |\nabla U|^2 } \right)\,dx + o(t^2).
  \end{aligned}
\end{equation}

For every \(\theta,\eta\in\mathbb R^n\) with \(|\theta|=1\),
\begin{equation}
  \label{eq:angular-identities-low-p}
  \begin{aligned}
    \left\langle |\theta\cdot\xi|^{p-2} (\theta\cdot\xi) (\eta\cdot\xi) \right\rangle_{\xi} &= m_{n,p}\,\theta\cdot\eta, \\
    \left\langle |\theta\cdot\xi|^{p-2} (\eta\cdot\xi)^2 \right\rangle_{\xi} &= \frac{m_{n,p}}{p-1} \left( |\eta|^2 + (p-2)(\theta\cdot\eta)^2 \right).
  \end{aligned}
\end{equation}
These identities follow by differentiation of \(\langle|\xi\cdot z|^p\rangle_\xi=m_{n,p}|z|^p\). By
\eqref{eq:angular-identities-low-p} and \(-\Delta_pU=\Lambda U^{p^*-1}\),
\begin{equation}
  \label{eq:average-Lxi-low-p}
  \alpha_{n,p} \left\langle L_\xi(\phi) \right\rangle_{\mathbb S^{n-1}} = p\Lambda \int_{\mathbb R^n} U^{p^*-1}\phi\,dx.
\end{equation}

Taylor's formula for \(\Phi\) at \(A_0\), together with \(\|h_t\|_{L^\infty(\mathbb S^{n-1})}=O(|t|)\),
yields
\begin{equation}
  \label{eq:Phi-expansion-low-p}
  \Phi(A_0+h_t) = A_0 + \left\langle h_t \right\rangle_{\mathbb S^{n-1}} - \frac{\tau+1}{2A_0} \operatorname{Var}_{\xi}(h_t) + o(t^2).
\end{equation}
By \eqref{eq:affine-variance-limit-low-p}, \eqref{eq:affine-average-limit-low-p}, and
\eqref{eq:Phi-expansion-low-p}, we obtain
\[
  \begin{aligned}
    \mathcal E_{\rm aff,p}(U+t\phi)^p &= \alpha_{n,p}A_0 + t\alpha_{n,p} \left\langle L_\xi(\phi) \right\rangle_{\mathbb S^{n-1}} \\
    &\quad+ \frac p2t^2 \int_{\mathbb R^n} |\nabla U|^{p-2} \left( |\nabla\phi|^2 + (p-2) \frac{ (\nabla U\cdot\nabla\phi)^2 }{ |\nabla U|^2 } \right)\,dx \\
    &\quad- t^2\alpha_{n,p} \frac{\tau+1}{2A_0} \operatorname{Var}_{\xi} \bigl(L_\xi(\phi)\bigr) + o(t^2).
  \end{aligned}
\]

For each fixed \(\phi\in C_{c,0}^\infty(\mathbb R^n)\), \eqref{eq:average-Lxi-low-p} and Taylor's
formula for \(S_{n,p}^p\|U+t\phi\|_{L^{p^*}(\mathbb R^n)}^p\) give
\begin{equation}
  \label{eq:affine-deficit-expansion-low-p}
  \mathcal E_{\rm aff,p}(U+t\phi)^p - S_{n,p}^p \|U+t\phi\|_{L^{p^*}(\mathbb R^n)}^p = \frac p2t^2 Q_{\rm aff,p}(\phi) + o(t^2),
\end{equation}
where \(Q_{\rm aff,p}(\phi)=Q_{\rm Sob,p}(\phi)-\mathcal R_p(\phi)\) and
\begin{equation}
  \label{sec2edit:def-Rp}
  \mathcal R_p(\phi) = \alpha_{n,p} \frac{\tau+1}{pA_0} \operatorname{Var}_{\xi} \bigl(L_\xi(\phi)\bigr).
\end{equation}
The Sobolev quadratic form is
\begin{equation}
  \label{eq:QSob-low-p}
  \begin{aligned}
    Q_{\rm Sob,p}(\phi) &= \int_{\mathbb R^n} |\nabla U|^{p-2} \left( |\nabla\phi|^2 + (p-2) \frac{ (\nabla U\cdot\nabla\phi)^2 }{ |\nabla U|^2 } \right)\,dx \\
    &\quad- (p^*-1)\Lambda \int_{\mathbb R^n} U^{p^*-2}\phi^2\,dx \\
    &\quad+ (p^*-p)\Lambda \left( \int_{\mathbb R^n} U^{p^*}\,dx \right)^{-1} \left( \int_{\mathbb R^n} U^{p^*-1}\phi\,dx \right)^2.
  \end{aligned}
\end{equation}

The Cauchy--Schwarz inequality with weight \(|\nabla U|^{p-2}\) gives
\begin{equation}
  \label{eq:Lxi-HU-bound}
  \begin{aligned}
    |L_\xi(\phi)|^2 &= p^2 \left| \int_{\mathbb R^n} |\partial_\xi U|^{p-2} \partial_\xi U\,\partial_\xi\phi\,dx \right|^2 \\
    &\le p^2 \left( \int_{\mathbb R^n} |\partial_\xi U|^{2p-2} |\nabla U|^{2-p}\,dx \right) \left( \int_{\mathbb R^n} |\nabla U|^{p-2} |\nabla\phi|^2\,dx \right) \\
    &\le p^2 \left( \int_{\mathbb R^n} |\nabla U|^p\,dx \right) \|\phi\|_{\mathcal H_U}^2.
  \end{aligned}
\end{equation}
Hence, by \eqref{eq:weighted-Poincare-used-low} and \eqref{eq:principal-equivalence-gap-proof}, \(Q_{\rm
aff,p}\) extends uniquely to a continuous quadratic form on \(\mathcal H_U\).

%For a general \(\varphi\in\dot W^{1,p}(\mathbb R^n)\), the weighted integral \(\int_{\mathbb R^n}|\nabla U|^{p-2}|\nabla\varphi|^2\,dx\) may be infinite.

\subsection{Nonlinear lower bounds}

\begin{lemma}
  \label{lem:affine-nonlinear-lower-p<2}
  Let \(1<p<2\) and \(0<\kappa<1\). There exist \(\varepsilon_0=\varepsilon_0(n,p,\kappa)>0\) and
  \(c_\kappa=c_\kappa(p,\kappa)>0\) such that, for \(0<\varepsilon\le\varepsilon_0\) and
  \(\varphi\in\dot W^{1,p}(\mathbb R^n)\) with \(\|\nabla\varphi\|_{L^p(\mathbb R^n)}=1\),
  \begin{equation}
    \label{eq:affine-lower-p<2}
    \mathcal E_{\rm aff,p}(U+\varepsilon\varphi)^p \ge \mathcal E_{\rm aff,p}(U)^p + \varepsilon\alpha_{n,p} \left\langle L_\xi(\varphi)
      \right\rangle_{\mathbb S^{n-1}} + \varepsilon^2 \mathcal N_{{\rm aff}}^{\varepsilon,\kappa}(\varphi) + c_\kappa\alpha_{n,p}
      \left\langle \mathfrak R_{\xi,\varepsilon}(\varphi) \right\rangle_{\mathbb S^{n-1}},
  \end{equation}
  where
  \begin{equation}
    \label{eq:def-N}
    \mathcal N_{{\rm aff}}^{\varepsilon,\kappa}(\varphi) ={} \alpha_{n,p}(1-\kappa) \left\langle \mathfrak B_{\xi,\varepsilon}(\varphi)
      \right\rangle_{\mathbb S^{n-1}} - \alpha_{n,p}(1+\kappa) \frac{\tau+1}{2A_0} \operatorname{Var}_{\xi} \bigl(L_\xi(\varphi)\bigr),
  \end{equation}
  and
  \begin{equation}
    \label{eq:def-B}
    \mathfrak B_{\xi,\varepsilon}(\varphi) = \frac{p}{2}\int_{\mathbb R^n} \Bigg[|\partial_\xi U|^{p-2} |\partial_\xi\varphi|^2 +
      (p-2)|w_{\xi,\varepsilon}|^{p-2} \left( \frac{ |\partial_\xi U| - |\partial_\xi U+\varepsilon\partial_\xi\varphi| }{\varepsilon}
      \right)^2 \Bigg]\,dx,
  \end{equation}
  and
  \begin{equation}
    \label{eq:def-R}
    \mathfrak R_{\xi,\varepsilon}(\varphi) = \int_{\mathbb R^n} \min \left\{ \varepsilon^p|\partial_\xi\varphi|^p,\, \varepsilon^2|\partial_\xi U|^{p-2} |\partial_\xi\varphi|^2 \right\}\,dx.
  \end{equation}
  The function \(w_{\xi,\varepsilon}\) is defined by
  \begin{equation}
    \label{eq:def-w}
    w_{\xi,\varepsilon} =
    \begin{cases}
      \displaystyle \left( \frac{ |\partial_\xi U+\varepsilon\partial_\xi\varphi| }{ (2-p)|\partial_\xi U+\varepsilon\partial_\xi\varphi| +
        (p-1)|\partial_\xi U| } \right)^{\frac1{p-2}} \partial_\xi U, & |\partial_\xi U| < |\partial_\xi U+\varepsilon\partial_\xi\varphi|,
        \\[1.2em]
      \partial_\xi U, & |\partial_\xi U+\varepsilon\partial_\xi\varphi| \le |\partial_\xi U|.
    \end{cases}
  \end{equation}
\end{lemma}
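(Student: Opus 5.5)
The proof has two layers. First comes a pointwise scalar inequality in each direction \(\xi\), which yields a lower bound for \(A_\xi(U+\varepsilon\varphi)\). Then we insert these directional bounds into the concave-type functional \(\Phi\) and expand \(\Phi\) to second order around \(A_0\).

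\textbf{Step 1: the directional bound.} Fix \(\xi\) and apply \cite[Lemma~2.1]{FZ22} pointwise with \(a=\partial_\xi U\) and \(b=\varepsilon\partial_\xi\varphi\). For \(1<p<2\), that lemma gives
\[
|a+b|^p\ge |a|^p+p|a|^{p-2}ab+(1-\kappa)\tfrac p2\Bigl[|a|^{p-2}b^2+(p-2)|w|^{p-2}(|a|-|a+b|)^2\Bigr]+c_\kappa\min\{|b|^p,|a|^{p-2}b^2\},
\]
where \(w\) is exactly the interpolated vector in \eqref{eq:def-w}. If the lemma in \cite{FZ22} is stated only for vectors with a fixed constant, I would rederive the scalar version. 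One expands \(t\mapsto|a+tb|^p\) with an integral remainder, and identifies \(|w|^{p-2}\) through the mean value theorem applied to \(s\mapsto s^{p}\) on \([|a|,|a+b|]\). That mean value step explains the formula \(\bigl(|a+b|/((2-p)|a+b|+(p-1)|a|)\bigr)^{1/(p-2)}\). The \(\kappa\)-loss absorbs the small-\(|a|\) region into the \(\min\) term. Integrating over \(\mathbb R^n\), all terms are finite because each is bounded by \(C|b|^p+|a|^p\). This gives
\[
h_\varepsilon(\xi)\ge \varepsilon L_\xi(\varphi)+(1-\kappa)\varepsilon^2\mathfrak B_{\xi,\varepsilon}(\varphi)+c_\kappa\mathfrak R_{\xi,\varepsilon}(\varphi)=:g_\varepsilon(\xi).
\]
Note that \(\mathfrak B_{\xi,\varepsilon}\ge0\) up to the factor \((p-1)\). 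This holds because \(\bigl||a|-|a+b|\bigr|\le|b|\) and \(|w|\ge|a|\) in the relevant case, so \(|w|^{p-2}\le|a|^{p-2}\).

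\textbf{Step 2: monotonicity and expansion of \(\Phi\).} Since \(\Phi\) is monotone increasing in \(a\), we get \(\Phi(A_0+h_\varepsilon)\ge\Phi(A_0+g_\varepsilon)\). We need \(\|g_\varepsilon\|_{L^\infty}=O(\varepsilon)\) uniformly in \(\varphi\). This follows from \eqref{eq:Lxi-Lp-bound}, the bound \(\varepsilon^2\mathfrak B_{\xi,\varepsilon}+\mathfrak R_{\xi,\varepsilon}\lesssim \varepsilon^p+\varepsilon^2\cdot(\text{possibly infinite})\), and the fact that \(h_\varepsilon=O(\varepsilon)\). The issue is that \(\mathfrak B\) may be large.

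To handle this I would instead use a Taylor inequality for \(\Phi\) that needs only a lower bound, not smallness of the positive part. Concavity of \(\Phi\) (a power mean with negative exponent is concave) gives
\[
\Phi(A_0+g)\ge \Phi(A_0+\varepsilon L)+\langle\nabla\Phi(A_0+\varepsilon L),g-\varepsilon L\rangle
\]
only for an upper bound, so concavity points the wrong way. I would therefore argue directly. Write \(g=\varepsilon L+r\) with \(r\ge0\), and use that \(\partial\Phi/\partial a(\xi)\) is positive and equals \(|\mathbb S^{n-1}|^{-1}(1+O(\varepsilon))\) whenever \(a\) stays within \(O(\varepsilon)\) of \(A_0\). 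Truncate by setting \(r'=\min\{r,\varepsilon^{p/2}\}\), or handle the large part by monotonicity. The expansion \eqref{eq:Phi-expansion-low-p} with uniform \(O(\varepsilon^3)\)-type errors gives
\[
\Phi(A_0+\varepsilon L+r)\ge A_0+\varepsilon\langle L\rangle+(1-o(1))\langle r\rangle-\frac{\tau+1}{2A_0}\varepsilon^2\operatorname{Var}(L)-C\varepsilon^3.
\]
The cubic error must be absorbed into \(\kappa\varepsilon^2\operatorname{Var}(L)\) and the \(\mathfrak R\) term. The main obstacle is this step: the cubic and cross errors such as \(\varepsilon\langle L\,r\rangle\) must be controlled uniformly over all unit \(\varphi\). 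I would bound \(\varepsilon|L||r|\le C\varepsilon\, r\) and absorb it into \(\kappa\langle r\rangle\) for small \(\varepsilon\). For the pure \(L\) part, I would use exact homogeneity of \(\Phi\) and \(\Phi(A_0)=A_0\). I would also note that the \(\varepsilon\langle L\rangle\) linear term is exact, so the cubic remainder depends only on \(\varepsilon L\), with \(|\varepsilon L|\le C\varepsilon\). Its third-order term is then \(O(\varepsilon^3)\langle|L-\langle L\rangle|^2\rangle\)-like, which can be absorbed by \((1+\kappa)\) after shifting by the mean. This uses the invariance \(\Phi(a+c)\) in the constant direction, whose linearization is exact.

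Multiplying through by \(\alpha_{n,p}\), adjusting \(\kappa\), and choosing \(\varepsilon_0\) small then yields \eqref{eq:affine-lower-p<2}.
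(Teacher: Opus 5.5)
There is a genuine gap in Step~2. You never show, and in fact doubt, that the nonnegative remainder $r(\xi)=(1-\kappa)\varepsilon^2\mathfrak B_{\xi,\varepsilon}(\varphi)+c_\kappa\mathfrak R_{\xi,\varepsilon}(\varphi)$ is uniformly small on $\mathbb S^{n-1}$. Without that bound, no expansion of $\Phi$ can give the lemma. The inequality you display, $\Phi(A_0+\varepsilon L+r)\ge A_0+\varepsilon\langle L\rangle+(1-o(1))\langle r\rangle-\dots$, fails for large $r$. For a profile $A_0+M\mathbf 1_E$ one has $\Phi(A_0+M\mathbf 1_E)\le(1-\mu)^{-1/\tau}A_0$ with $\mu=|E|/|\mathbb S^{n-1}|$, uniformly in $M$, while $\langle M\mathbf 1_E\rangle=M\mu$ is unbounded. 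Truncating to $\min\{r,\varepsilon^{p/2}\}$ and using monotonicity only yields $\langle\min\{r,\varepsilon^{p/2}\}\rangle_{\mathbb S^{n-1}}$. That is not the full term $(1-\kappa)\varepsilon^2\langle\mathfrak B_{\xi,\varepsilon}(\varphi)\rangle_{\mathbb S^{n-1}}$ which enters $\mathcal N^{\varepsilon,\kappa}_{\rm aff}$. Also, the term $\operatorname{Var}_\xi(r)$ in the second-order expansion can be absorbed only when $\|r\|_{L^\infty(\mathbb S^{n-1})}$ is small.

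The missing observation is that $\varepsilon^2\mathfrak B_{\xi,\varepsilon}(\varphi)=O(\varepsilon^p)$ automatically. Read \cite[Lemma~2.1(i)]{FZ22} backwards, dropping the nonnegative $\min$ term, and use the $(p-1)$-H\"older continuity of $z\mapsto|z|^{p-2}z$:
\[
(1-\kappa)\frac p2\Bigl[|a|^{p-2}b^2+(p-2)|w|^{p-2}\bigl(|a|-|a+b|\bigr)^2\Bigr]\le|a+b|^p-|a|^p-p|a|^{p-2}ab\le C_p|b|^p .
\]
Hence $0\le\varepsilon^2\mathfrak B_{\xi,\varepsilon}(\varphi)\le C_{p,\kappa}\varepsilon^p$ and $0\le\mathfrak R_{\xi,\varepsilon}(\varphi)\le\varepsilon^p$, so $0\le r\le C\varepsilon^p$.

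Your contrary belief rests on the claim $|w|\ge|a|$, which is false. When $|a|<|a+b|$, \eqref{eq:def-w} gives $|w|^{p-2}=\frac{|a+b|}{(2-p)|a+b|+(p-1)|a|}|a|^{p-2}\ge|a|^{p-2}$. Nonnegativity of the bracket comes instead from the identity $|a|^{p-2}-(2-p)|w|^{p-2}=\frac{(p-1)|a|}{(2-p)|a+b|+(p-1)|a|}|a|^{p-2}\ge0$. This factor degenerates as $|a|/|a+b|\to0$, and that degeneration cancels the singular part $|a|^{p-2}b^2$. That part is not bounded by $C(|a|^p+|b|^p)$ on its own, so no lower bound by $(p-1)\frac p2|a|^{p-2}b^2$ holds.

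Once $0\le r\le C\varepsilon^p$ is known, $\|g_\varepsilon\|_{L^\infty}\le C\varepsilon$, and the proof closes with three facts:
\begin{itemize}
\item The bound $\Phi(A_0+g)\ge A_0+\langle g\rangle-(1+\gamma)\frac{\tau+1}{2A_0}\operatorname{Var}_\xi(g)$ holds for small $\|g\|_{L^\infty}$. Your homogeneity and mean-shift idea gives exactly this variance-proportional error. A bare $-C\varepsilon^3$ could not be absorbed into $\kappa\varepsilon^2\operatorname{Var}_\xi(L_\xi(\varphi))$, which vanishes for radial $\varphi$.
\item One has $\operatorname{Var}_\xi(g_\varepsilon)\le(1+\gamma)\varepsilon^2\operatorname{Var}_\xi(L_\xi(\varphi))+C_\gamma\langle r^2\rangle$.
\item One has $\langle r^2\rangle\le C\varepsilon^p\langle r\rangle$, which is absorbed into $\gamma\langle r\rangle$.
\end{itemize}
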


\begin{proof}
  Fix \(0<\kappa_*<1\). By the reverse triangle inequality and \eqref{eq:def-w},
  \begin{equation}
    \label{eq:Bge0}
    \begin{aligned}
      \frac p2 |\partial_\xi U|^{p-2}|\partial_\xi\varphi|^2 &+ \frac{p(p-2)}2 |w_{\xi,\varepsilon}|^{p-2} \left( \frac{ |\partial_\xi
        U+\varepsilon\partial_\xi\varphi| - |\partial_\xi U| }{ \varepsilon } \right)^2 \\
      &\qquad\ge \frac p2 \left( |\partial_\xi U|^{p-2} - (2-p)|w_{\xi,\varepsilon}|^{p-2} \right) |\partial_\xi\varphi|^2 \ge0.
    \end{aligned}
  \end{equation}
  Indeed, if \(|\partial_\xi U+\varepsilon\partial_\xi\varphi| \le|\partial_\xi U|\), then
  \(w_{\xi,\varepsilon}=\partial_\xi U\). Otherwise,
  \[
    |\partial_\xi U|^{p-2} - (2-p)|w_{\xi,\varepsilon}|^{p-2} = \frac{ (p-1)|\partial_\xi U| }{ (2-p)|\partial_\xi U+\varepsilon\partial_\xi\varphi| + (p-1)|\partial_\xi U| } |\partial_\xi U|^{p-2} \ge0.
  \]
  By \cite[Lemma~2.1(i)]{FZ22} and the \((p-1)\)-H\"older continuity of \(z\mapsto|z|^{p-2}z\),
  \[
    \begin{aligned}
      0 &\le \frac{\varepsilon^2}{2} \Bigg[ p|\partial_\xi U|^{p-2} |\partial_\xi\varphi|^2 + p(p-2)|w_{\xi,\varepsilon}|^{p-2} \left(
        \frac{ |\partial_\xi U| - |\partial_\xi U+\varepsilon\partial_\xi\varphi| }{\varepsilon} \right)^2 \Bigg] \\
      &\le \frac{1}{1-\kappa_*} \Bigl( |\partial_\xi U+\varepsilon\partial_\xi\varphi|^p - |\partial_\xi U|^p - p|\partial_\xi U|^{p-2} \partial_\xi U\, \varepsilon\partial_\xi\varphi \Bigr) \\
      &= \frac{p}{1-\kappa_*} \int_0^1 \Bigl( |\partial_\xi U+t\varepsilon\partial_\xi\varphi|^{p-2} \bigl( \partial_\xi
        U+t\varepsilon\partial_\xi\varphi \bigr) - |\partial_\xi U|^{p-2}\partial_\xi U \Bigr) \varepsilon\partial_\xi\varphi\,dt \\
      &\le C_{p,\kappa_*} \varepsilon^p|\partial_\xi\varphi|^p.
    \end{aligned}
  \]
  Integrating and using \eqref{eq:def-R}, we obtain
  \begin{equation}
    \label{eq:B-R-upper-p<2}
    0\le\varepsilon^2\mathfrak B_{\xi,\varepsilon}(\varphi) \le C_{p,\kappa_*}\varepsilon^p A_\xi(\varphi) \le
      C_{p,\kappa_*}\varepsilon^p,\quad 0\le\mathfrak R_{\xi,\varepsilon}(\varphi) \le\varepsilon^p A_\xi(\varphi)\le\varepsilon^p.
  \end{equation}
  Thus by \cite[Lemma~2.1(i)]{FZ22}, with \(x=\partial_\xi U\) and \(y=\varepsilon\partial_\xi\varphi\),
  gives
  \begin{equation}
    \label{eq:directional-FZ-used-p<2}
    \begin{aligned}
      A_\xi(U+\varepsilon\varphi) &\ge A_0 + \varepsilon L_\xi(\varphi) + (1-\kappa_*)\varepsilon^2 \mathfrak B_{\xi,\varepsilon}(\varphi) + c_{\kappa_*} \mathfrak R_{\xi,\varepsilon}(\varphi).
    \end{aligned}
  \end{equation}

  Set
  \[
    h_\varepsilon(\xi) = \varepsilon L_\xi(\varphi)+s_\varepsilon(\xi), \qquad s_\varepsilon(\xi) = (1-\kappa_*)\varepsilon^2 \mathfrak
      B_{\xi,\varepsilon}(\varphi) + c_{\kappa_*} \mathfrak R_{\xi,\varepsilon}(\varphi).
  \]
  By \eqref{eq:Lxi-Lp-bound} and \eqref{eq:B-R-upper-p<2}, \(0\le s_\varepsilon(\xi)\le
  C_{p,\kappa_*}\varepsilon^p\) and \(\|h_\varepsilon\|_{L^\infty(\mathbb S^{n-1})}\le C\varepsilon\).
  By monotonicity of \(\Phi\) and \eqref{eq:directional-FZ-used-p<2}, \(\mathcal E_{\rm
  aff,p}(U+\varepsilon\varphi)^p \ge\alpha_{n,p}\Phi(A_0+h_\varepsilon)\).

  For each \(\gamma>0\), there is \(\delta_0>0\) such that
  \begin{equation}
    \label{eq:Phi-lower-general-p<2}
    \Phi(A_0+h) \ge A_0 + \left\langle h\right\rangle_{\mathbb S^{n-1}} - (1+\gamma) \frac{\tau+1}{2A_0} \operatorname{Var}_{\xi}(h)
  \end{equation}
  whenever \(\|h\|_{L^\infty(\mathbb S^{n-1})}\le\delta_0\). By Young's inequality and \(0\le
  s_\varepsilon\le C_{p,\kappa_*}\varepsilon^p\) give
  \begin{equation}
    \label{eq:bounded-var}
    \begin{aligned}
      \operatorname{Var}_{\xi}(h_\varepsilon) &\le (1+\gamma)\varepsilon^2 \operatorname{Var}_{\xi} \bigl(L_\xi(\varphi)\bigr) + C_\gamma \operatorname{Var}_{\xi}(s_\varepsilon), \\
      \operatorname{Var}_{\xi}(s_\varepsilon) &\le \left\langle s_\varepsilon^2 \right\rangle_{\mathbb S^{n-1}} \le C_{p,\kappa_*}\varepsilon^p \left\langle s_\varepsilon \right\rangle_{\mathbb S^{n-1}}.
    \end{aligned}
  \end{equation}
  By \eqref{eq:Phi-lower-general-p<2}, for sufficiently small \(\varepsilon_0\) with
  \(C_\gamma\varepsilon_0^p\le\gamma\) and \eqref{eq:bounded-var},
  \[
    \begin{aligned}
      \Phi(A_0+h_\varepsilon) &\ge A_0 + \varepsilon \left\langle L_\xi(\varphi) \right\rangle_{\mathbb S^{n-1}} + \left\langle s_\varepsilon \right\rangle_{\mathbb S^{n-1}} \nonumber \\
      &\quad - (1+\gamma)^2 \frac{\tau+1}{2A_0} \varepsilon^2 \operatorname{Var}_{\xi} \bigl(L_\xi(\varphi)\bigr) - C_\gamma\varepsilon^p \left\langle s_\varepsilon \right\rangle_{\mathbb S^{n-1}} \nonumber \\
      &\ge A_0 + \varepsilon \left\langle L_\xi(\varphi) \right\rangle_{\mathbb S^{n-1}} + (1-\gamma) \left\langle s_\varepsilon \right\rangle_{\mathbb S^{n-1}} \nonumber \\
      &\quad - (1+\gamma)^2 \frac{\tau+1}{2A_0} \varepsilon^2 \operatorname{Var}_{\xi} \bigl(L_\xi(\varphi)\bigr) \nonumber \\
      &= A_0 + \varepsilon \left\langle L_\xi(\varphi) \right\rangle_{\mathbb S^{n-1}} + (1-\gamma)(1-\kappa_*)\varepsilon^2 \left\langle
        \mathfrak B_{\xi,\varepsilon}(\varphi) \right\rangle_{\mathbb S^{n-1}} \nonumber \\
      &\quad - (1+\gamma)^2 \frac{\tau+1}{2A_0} \varepsilon^2 \operatorname{Var}_{\xi} \bigl(L_\xi(\varphi)\bigr) + (1-\gamma)c_{\kappa_*}
        \left\langle \mathfrak R_{\xi,\varepsilon}(\varphi) \right\rangle_{\mathbb S^{n-1}}.
    \end{aligned}
  \]
  Taking \(\gamma=\kappa_*=\kappa/4\) and \(c_\kappa=(1-\gamma)c_{\kappa_*}\), we obtain
  \eqref{eq:affine-lower-p<2} from \eqref{eq:def-N} and \(\alpha_{n,p}A_0=\mathcal E_{\rm aff,p}(U)^p\).
\end{proof}

Assume \(\|\nabla\varphi\|_{L^p(\mathbb R^n)}=1\) and \(0<\varepsilon\le\varepsilon_0\).

Suppose first that \(1<p\le2n/(n+2)\), or equivalently \(p^*\le2\). Define
\begin{equation}
  \label{sec2edit:def-Pepsilon}
  \mathcal P_\varepsilon(\varphi) = \int_{\mathbb R^n} \frac{ \bigl(U+C_1|\varepsilon\varphi|\bigr)^{p^*} }{ U^2+|\varepsilon\varphi|^2 } |\varphi|^2\,dx,
\end{equation}
where \(C_1=C_1(p^*,\kappa)>0\) is chosen as in \cite[Lemma~2.4(i)]{FZ22}, for fixed \(0<\kappa<1\).
Applying that lemma gives
\begin{equation}
  \label{eq:Lpstar-upper-low-p}
  \begin{aligned}
    S_{n,p}^p \|U+\varepsilon\varphi\|_{L^{p^*}(\mathbb R^n)}^p &\le S_{n,p}^p \|U\|_{L^{p^*}(\mathbb R^n)}^p + \varepsilon p\Lambda \int_{\mathbb R^n} U^{p^*-1}\varphi\,dx \\
    &\quad+ \varepsilon^2\Lambda \left( \frac{p(p^*-1)}2 + \frac{p\kappa}{p^*} \right) \mathcal P_\varepsilon(\varphi).
  \end{aligned}
\end{equation}
By \eqref{eq:average-Lxi-low-p}, \eqref{eq:affine-lower-p<2}, and \eqref{eq:Lpstar-upper-low-p}, we
obtain
\begin{equation}
  \label{eq:deficit-lower-low-p-before-prop}
  \delta_{\rm aff}(U+\varepsilon\varphi) \ge \varepsilon^2 \left[ \mathcal N_{{\rm aff}}^{\varepsilon,\kappa}(\varphi) - \Lambda \left(
    \frac{p(p^*-1)}2 + \frac{p\kappa}{p^*} \right) \mathcal P_\varepsilon(\varphi) \right] + c_\kappa\alpha_{n,p} \left\langle \mathfrak
    R_{\xi,\varepsilon}(\varphi) \right\rangle_{\mathbb S^{n-1}}.
\end{equation}

The following proposition is proved in Section~\ref{sec:nonlinear-coercivity}.

\begin{proposition}
  \label{prop:coercivity-low-p}
  Let \(1<p\le2n/(n+2)\). There exist \(\kappa_0=\kappa_0(n,p)\in(0,1)\) and \(\eta_0=\eta_0(n,p)>0\)
  such that, for each \(0<\kappa\le\kappa_0\), one can choose
  \(\varepsilon_0=\varepsilon_0(n,p,\kappa)>0\) for which
  \begin{equation}
    \label{eq:coercivity-low-p}
    \begin{aligned}
      &\mathcal N_{{\rm aff}}^{\varepsilon,\kappa}(\varphi) + \frac{c_\kappa}{2} \alpha_{n,p}\varepsilon^{-2} \left\langle \mathfrak
        R_{\xi,\varepsilon}(\varphi) \right\rangle_{\mathbb S^{n-1}} \ge \frac p2 \bigl( (p^*-1)\Lambda+\eta_0 \bigr) \mathcal
        P_\varepsilon(\varphi).
    \end{aligned}
  \end{equation}
  whenever \(0<\varepsilon\le\varepsilon_0\) and \(\varphi\in\dot W^{1,p}(\mathbb R^n)\) satisfy
  \(\|\nabla\varphi\|_{L^p(\mathbb R^n)}=1\) and \(\varphi\perp T_U\mathcal M_{\rm aff}\).
\end{proposition}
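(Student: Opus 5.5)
We argue by contradiction, following the compactness scheme of \cite[Section~3]{FZ22}. Fix \(\kappa\) small (to be chosen, giving \(\kappa_0\)) and \(\eta_0>0\) small. If the proposition fails for this \(\kappa\), there are \(\varepsilon_k\to0\) and \(\varphi_k\in\dot W^{1,p}(\mathbb R^n)\) with \(\|\nabla\varphi_k\|_{L^p}=1\) and \(\varphi_k\perp T_U\mathcal M_{\rm aff}\), for which \eqref{eq:coercivity-low-p} fails. We normalize by the two-scale quantity
\[
  D_k=\bigl\langle \mathfrak B_{\xi,\varepsilon_k}(\varphi_k)\bigr\rangle_{\mathbb S^{n-1}}+\varepsilon_k^{-2}\bigl\langle\mathfrak R_{\xi,\varepsilon_k}(\varphi_k)\bigr\rangle_{\mathbb S^{n-1}},
\]
and set \(\psi_k=\varphi_k/\sqrt{D_k}\). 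Every term in \eqref{eq:coercivity-low-p} is \(2\)-homogeneous after this rescaling. The variance term is controlled by \eqref{eq:Lxi-HU-bound}, applied on the region where \(|\varepsilon\partial_\xi\varphi|\lesssim|\partial_\xi U|\); the complementary region is absorbed by \(\mathfrak R\) through Hölder's inequality. Likewise \(\mathcal P_{\varepsilon_k}(\varphi_k)\lesssim D_k\), using the weighted Poincaré inequality \eqref{eq:weighted-Poincare-used-low} on the region \(|\varepsilon\varphi|\le U\) and the \(p^*\le2\) structure (the weight is then \(\lesssim|\varepsilon\varphi|^{p^*-2}\)) elsewhere. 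Hence all quantities are \(O(1)\) after normalization.

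Next we extract limits. Split \(\psi_k=\psi_k^{(1)}+\psi_k^{(2)}\) according to whether \(|\varepsilon_k\nabla\varphi_k|\le\delta|\nabla U|\) or not. This splitting makes \(\psi_k^{(1)}\) bounded in \(\mathcal H_U\), since there \(\mathfrak B\) dominates \(c\int|\nabla U|^{p-2}|\nabla\psi|^2\) after averaging over \(\xi\) by \eqref{eq:angular-identities-low-p} and \eqref{eq:principal-equivalence-gap-proof}. The part \(\psi_k^{(2)}\), by contrast, has gradient small in \(L^p\) relative to the normalization, because \(\varepsilon_k^{-2}\mathfrak R\) controls \(\varepsilon_k^{p-2}\int|\nabla\psi|^p\) there and \(\varepsilon_k^{p-2}\to\infty\). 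This is the analogue of Lemma~2.1-type splittings in \cite{FZ22}; the averaged version is what we would prove as Lemma~\ref{lem:averaged-affine-lsc-low-p}. Then \(\psi_k^{(1)}\rightharpoonup\psi\) in \(\mathcal H_U\), and \(\psi\perp T_U\mathcal M_{\rm aff}\) follows by compact embedding into \(L^2(U^{p^*-2})\). Along the way we establish the lower semicontinuity
\[
  \liminf_k\Bigl[(1-\kappa)\langle\mathfrak B_{\xi,\varepsilon_k}(\psi_k)\rangle-(1+\kappa)\tfrac{\tau+1}{2A_0}\operatorname{Var}_\xi L_\xi(\psi_k)\Bigr]\ge \tfrac p2 Q_{\rm Sob,p}^{\rm grad}(\psi)-\tfrac{p}{2}\mathcal R_p(\psi)+\tfrac p2\liminf_k\bigl[\text{positive remainder}\bigr]-C\kappa,
\]
where \(Q^{\rm grad}_{\rm Sob,p}\) denotes the gradient part of \(Q_{\rm Sob,p}\). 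This uses weak convergence of \(L_\xi(\psi_k)\) uniformly in \(\xi\), which holds because \(L_\xi\) is a continuous linear functional on \(\mathcal H_U\) with norm bounded uniformly in \(\xi\). Meanwhile \(\mathcal P_{\varepsilon_k}(\psi_k)\to\int U^{p^*-2}\psi^2\) up to an error, by compactness on the good region and smallness on the bad one.

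To conclude, Proposition~\ref{prop:affine-spectral-gap} gives \(Q_{\rm aff,p}(\psi)\ge c_{\rm sg}\|\psi\|_{\mathcal H_U}^2\). The term \((\int U^{p^*-1}\psi)^2\) vanishes by orthogonality to \(U\), so the limiting inequality holds with slack \(c_{\rm sg}\|\psi\|^2_{\mathcal H_U}\), which dominates \(\eta_0\int U^{p^*-2}\psi^2\) for \(\eta_0\le c_{\rm sg}/c_0\). The failure of \eqref{eq:coercivity-low-p} then forces \(\psi=0\), and forces the positive remainder parts of \(\mathfrak B\) and of \(\varepsilon^{-2}\mathfrak R\) to tend to zero. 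Together these give \(D_k/D_k\to0\), contradicting the normalization \(=1\); choosing \(\kappa_0\) small absorbs the \(C\kappa\) losses.

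The main obstacle is the lower semicontinuity step. \(\mathfrak B\) contains the negative term with \((p-2)|w|^{p-2}\), and it must be shown that, uniformly in \(\xi\), this term converges on the good region to the correct second-variation integrand while remaining nonnegative overall by \eqref{eq:Bge0}. In addition, the variance term must pass to the limit \emph{exactly}, not merely semicontinuously, since it enters with a minus sign. We would handle the latter by proving that the bad-region contribution to \(L_\xi(\psi_k)\) tends to \(0\) uniformly in \(\xi\), and that \(L_\xi(\psi_k^{(1)})\to L_\xi(\psi)\) uniformly by equicontinuity in \(\xi\).
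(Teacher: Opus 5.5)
Your architecture matches the paper's: contradiction, two-scale normalization, a weak limit $\psi\in\mathcal H_U$ obtained from lower semicontinuity of $\langle\mathfrak B\rangle$, the spectral gap forcing $\psi=0$, and a final contradiction through $\mathfrak R$. The normalization step, however, is wrong as written. The terms in \eqref{eq:coercivity-low-p} are not $2$-homogeneous in $\varphi$ at fixed $\varepsilon$; for instance, $\mathfrak R_{\xi,\varepsilon}$ is $p$-homogeneous on one branch of the minimum. What is true is that $\varepsilon^2\mathfrak B_{\xi,\varepsilon}(\varphi)$, $\mathfrak R_{\xi,\varepsilon}(\varphi)$, $\varepsilon^2\mathcal P_\varepsilon(\varphi)$ and $\varepsilon^2\operatorname{Var}_\xi(L_\xi(\varphi))$ depend only on $\varepsilon\varphi$. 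So you must rescale the parameter together with the function: take $\psi_k=\varphi_k/\sqrt{D_k}$ and $\rho_k=\varepsilon_k\sqrt{D_k}$. Then $\rho_k\psi_k=\varepsilon_k\varphi_k$, and the failing inequality becomes the same inequality for $(\rho_k,\psi_k)$. Every later object is $\mathfrak B_{\xi,\rho_k}(\psi_k)$, $\mathcal P_{\rho_k}(\psi_k)$, and so on, not $\mathfrak B_{\xi,\varepsilon_k}(\psi_k)$. Your good/bad splitting then runs on $\rho_k$: the bad-region gradient is penalized by $\rho_k^{p-2}$, not $\varepsilon_k^{p-2}$, and the Chebyshev argument on annuli needs $\rho_k\to0$. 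This is not automatic, because $D_k$ can be as large as $\varepsilon_k^{p-2}$. It follows from $\|\nabla\varphi_k\|_{L^p(\mathbb R^n)}=1$, which gives $\rho_k^2\lesssim\varepsilon_k^p$; this is exactly the paper's \eqref{eq:rho-to-zero-low}. Your $D_k$ is comparable to the paper's $\rho_j^2/\varepsilon_j^2$ (below by \eqref{eq:averaged-R-lower}, above by the pointwise bounds on the integrands), so after this correction the two normalizations agree. Note also that splitting $\psi_k$ by the size of its gradient does not produce two functions. The paper instead works with the truncated fields $\mathbf 1_{G_{j,R}^{\eta,\sigma}}\nabla\psi_j$ on annuli, and proves $\psi\in\mathcal H_U$ by a separate cutoff-and-density argument in Lemma~\ref{lem:averaged-affine-lsc-low-p}.

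Two further steps do not close as stated. First, the argument needs $\mathcal P_{\rho_k}(\psi_k)\to\int_{\mathbb R^n}U^{p^*-2}\psi^2\,dx$ \emph{exactly}, and uses it twice: to force $\psi=0$, and then to make the right-hand side of the failing inequality vanish. Convergence ``up to an error'' is not enough. Your route to it also fails. The inequality \eqref{eq:weighted-Poincare-used-low} holds for functions in $\mathcal H_U$, while $\varphi_k$ is only controlled in the two-scale sense. Nor can it be localized to $\{|\varepsilon\varphi|\le U\}$, a set defined by the values of $\varphi$ rather than its gradient. For $p^*\le2$ the weight $U^{p^*-2}$ does not decay, and $\dot W^{1,p}(\mathbb R^n)$ does not embed compactly into $L^2(\mathbb R^n;U^{p^*-2}dx)$ (contrast Lemma~\ref{lem:compact-embedding-high-p}). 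The required compactness is exactly \cite[Lemma~3.4]{FZ22}, which the paper invokes under \eqref{eq:two-scale-normalization-low} and $\rho_j\to0$.

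Second, a lower semicontinuity estimate with an additive loss $-C\kappa$ is too weak. It only yields $\|\psi\|_{\mathcal H_U}^2\lesssim\kappa$, and then $\limsup_k\rho_k^{-2}\langle\mathfrak R_{\xi,\rho_k}(\psi_k)\rangle_{\mathbb S^{n-1}}\lesssim\kappa/c_\kappa$. Since $c_\kappa\lesssim\kappa$ (for $|y|\ll|x|$ the inequality of \cite[Lemma~2.1(i)]{FZ22} is sharp at second order up to the factor $1-\kappa$), this does not contradict the normalization. The paper keeps the losses multiplicative:
\begin{itemize}
\item Lemma~\ref{lem:averaged-affine-lsc-low-p} bounds $(1-\kappa)\liminf_k\alpha_{n,p}\langle\mathfrak B_{\xi,\rho_k}(\psi_k)\rangle_{\mathbb S^{n-1}}$ below by $(1-\kappa)$ times the gradient part of $\frac p2Q_{\rm Sob,p}(\psi)$.
\item $\operatorname{Var}_\xi(L_\xi(\psi_k))\to\operatorname{Var}_\xi(L_\xi(\psi))$ follows from weak convergence in $\dot W^{1,p}$ together with compactness of $\{L_\xi:\xi\in\mathbb S^{n-1}\}$ in $(\dot W^{1,p})^*$. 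A uniform bound on $\|L_\xi\|$ alone would not give uniformity in $\xi$.
\end{itemize}
The $\kappa$-losses are then at most $\kappa C_*\|\psi\|_{\mathcal H_U}^2$, with $C_*$ from \eqref{eq:quadratic-terms-controlled-low}, and the choice $C_*\kappa_0\le c_{\rm sg}/4$ absorbs them into the spectral gap. This gives $\psi=0$ exactly. The failing inequality then forces $\langle\mathfrak B_{\xi,\rho_k}(\psi_k)\rangle+\rho_k^{-2}\langle\mathfrak R_{\xi,\rho_k}(\psi_k)\rangle\to0$, with no comparison between $c_\kappa$ and $\kappa$ required.
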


For \(\varphi\perp T_U\mathcal M_{\rm aff}\), choose \(0<\kappa\le\kappa_0\) so that \(\frac
p2\eta_0-\frac{p\kappa}{p^*}\Lambda>0\). Applying \eqref{eq:coercivity-low-p} to
\eqref{eq:deficit-lower-low-p-before-prop} gives
\begin{equation}
  \label{eq:deficit-after-low-coercivity}
  \begin{aligned}
    \delta_{\rm aff}(U+\varepsilon\varphi) &\ge \varepsilon^2 \left( \frac p2\eta_0 - \frac{p\kappa}{p^*}\Lambda \right) \mathcal
      P_\varepsilon(\varphi) + \frac{c_\kappa}{2} \alpha_{n,p} \left\langle \mathfrak R_{\xi,\varepsilon}(\varphi) \right\rangle_{\mathbb
      S^{n-1}} \\
    &\ge c\varepsilon^2\mathcal P_\varepsilon(\varphi) + \frac{c_\kappa}{2} \alpha_{n,p} \left\langle \mathfrak R_{\xi,\varepsilon}(\varphi) \right\rangle_{\mathbb S^{n-1}}.
  \end{aligned}
\end{equation}

For every \(1<p<2\), there is \(c_p>0\) such that
\begin{equation}
  \label{eq:minimum-remainder-coercivity}
  \min \left\{ t^p,\, a^{p-2}t^2 \right\} \ge c_p(a+t)^{p-2}t^2 \qquad \text{for every }a,t\ge0.
\end{equation}
Set \(a=|\partial_\xi U|\) and \(t=\varepsilon|\partial_\xi\varphi|\) in
\eqref{eq:minimum-remainder-coercivity}. Since \(p-2<0\),
\[
  \begin{aligned}
    \mathfrak R_{\xi,\varepsilon}(\varphi) &\ge c_p\varepsilon^2 \int_{\mathbb R^n} \bigl( |\partial_\xi U| + \varepsilon|\partial_\xi\varphi| \bigr)^{p-2} |\partial_\xi\varphi|^2\,dx \\
    &\ge c_p\varepsilon^2 \int_{\mathbb R^n} \bigl( |\nabla U| + \varepsilon|\nabla\varphi| \bigr)^{p-2} |\nabla\varphi\cdot\xi|^2\,dx.
  \end{aligned}
\]
Average over \(\mathbb S^{n-1}\) and use \(\langle|\nabla\varphi\cdot\xi|^2\rangle_\xi
=|\nabla\varphi|^2/n\). Then
\begin{equation}
  \label{eq:averaged-R-lower}
  \varepsilon^{-2} \left\langle \mathfrak R_{\xi,\varepsilon}(\varphi) \right\rangle_{\mathbb S^{n-1}} \ge \frac{c_p}{n} \int_{\mathbb R^n}
    \bigl( |\nabla U| + \varepsilon|\nabla\varphi| \bigr)^{p-2} |\nabla\varphi|^2\,dx.
\end{equation}
H\"older's inequality gives
\begin{equation}
  \label{eq:two-scale-energy-lower}
  \begin{aligned}
    1 &= \int_{\mathbb R^n} |\nabla\varphi|^p\,dx \\
    &\le \left( \int_{\mathbb R^n} \bigl( |\nabla U| + \varepsilon|\nabla\varphi| \bigr)^{p-2} |\nabla\varphi|^2\,dx \right)^{p/2} \left(
      \int_{\mathbb R^n} \bigl( |\nabla U| + \varepsilon|\nabla\varphi| \bigr)^p\,dx \right)^{(2-p)/2}.
  \end{aligned}
\end{equation}
For \(0<\varepsilon\le1\),
\[
  \begin{aligned}
    \int_{\mathbb R^n} \bigl( |\nabla U| + \varepsilon|\nabla\varphi| \bigr)^p\,dx &\le C_p \int_{\mathbb R^n} \left( |\nabla U|^p + \varepsilon^p|\nabla\varphi|^p \right)\,dx \le C_{n,p}.
  \end{aligned}
\]
Substitution into \eqref{eq:two-scale-energy-lower} yields
\begin{equation}
  \label{eq:two-scale-energy-positive}
  \int_{\mathbb R^n} \bigl( |\nabla U| + \varepsilon|\nabla\varphi| \bigr)^{p-2} |\nabla\varphi|^2\,dx \ge c_{n,p}.
\end{equation}
By \eqref{eq:averaged-R-lower} and \eqref{eq:two-scale-energy-positive} imply
\begin{equation}
  \label{eq:R-controls-epsilon2}
  \left\langle \mathfrak R_{\xi,\varepsilon}(\varphi) \right\rangle_{\mathbb S^{n-1}} \ge c_{n,p}\varepsilon^2.
\end{equation}
For \(1<p\le2n/(n+2)\), \eqref{eq:deficit-after-low-coercivity} and \eqref{eq:R-controls-epsilon2} give
\begin{equation}
  \label{eq:local-lower-low-p}
  \delta_{\rm aff}(U+\varepsilon\varphi) \ge c\varepsilon^2.
\end{equation}

Suppose next that \(2n/(n+2)<p<2\), so \(p^*>2\). For every \(\kappa>0\), \cite[Lemma~2.4(ii)]{FZ22} and
the concavity of \(s\mapsto s^{p/p^*}\) give
\begin{equation}
  \label{eq:Lpstar-upper-high-p}
  \begin{aligned}
    S_{n,p}^p \|U+\varepsilon\varphi\|_{L^{p^*}(\mathbb R^n)}^p &\le S_{n,p}^p \|U\|_{L^{p^*}(\mathbb R^n)}^p + \varepsilon p\Lambda
      \int_{\mathbb R^n} U^{p^*-1}\varphi\,dx+ C_\kappa\varepsilon^{p^*} \int_{\mathbb R^n} |\varphi|^{p^*}\,dx \\
    &\quad+ \varepsilon^2\Lambda \left( \frac{p(p^*-1)}2+\kappa \right) \int_{\mathbb R^n} U^{p^*-2}\varphi^2\,dx .
  \end{aligned}
\end{equation}

By \eqref{eq:average-Lxi-low-p}, \eqref{eq:affine-lower-p<2}, and \eqref{eq:Lpstar-upper-high-p}, for
\(0<\kappa<1\),
\begin{equation}
  \label{eq:deficit-lower-high-p-before-prop}
  \begin{aligned}
    \delta_{\rm aff}(U+\varepsilon\varphi) &\ge \varepsilon^2 \left[ \mathcal N_{{\rm aff}}^{\varepsilon,\kappa}(\varphi) - \Lambda \left(
      \frac{p(p^*-1)}2+\kappa \right) \int_{\mathbb R^n} U^{p^*-2}\varphi^2\,dx \right] \\
    &\quad+ c_\kappa\alpha_{n,p} \left\langle \mathfrak R_{\xi,\varepsilon}(\varphi) \right\rangle_{\mathbb S^{n-1}} - C_\kappa\varepsilon^{p^*} \int_{\mathbb R^n} |\varphi|^{p^*}\,dx .
  \end{aligned}
\end{equation}

The corresponding coercivity estimate for \(p^*>2\) is also proved in
Section~\ref{sec:nonlinear-coercivity}.

\begin{proposition}
  \label{prop:coercivity-high-p}
  Let \(2n/(n+2)<p<2\). There exist \(\kappa_0=\kappa_0(n,p)\in(0,1)\) and \(\eta_0=\eta_0(n,p)>0\) such
  that, for each \(0<\kappa\le\kappa_0\), one can choose \(\varepsilon_0=\varepsilon_0(n,p,\kappa)>0\)
  for which
  \begin{equation}
    \label{eq:coercivity-high-p}
    \begin{aligned}
      &\mathcal N_{{\rm aff}}^{\varepsilon,\kappa}(\varphi) + \frac{c_\kappa}{2} \alpha_{n,p}\varepsilon^{-2} \left\langle \mathfrak
        R_{\xi,\varepsilon}(\varphi) \right\rangle_{\mathbb S^{n-1}} \ge \frac p2 \bigl( (p^*-1)\Lambda+\eta_0 \bigr) \int_{\mathbb R^n}
        U^{p^*-2}\varphi^2\,dx.
    \end{aligned}
  \end{equation}
  whenever \(0<\varepsilon\le\varepsilon_0\) and \(\varphi\in\dot W^{1,p}(\mathbb R^n)\) satisfy
  \(\|\nabla\varphi\|_{L^p(\mathbb R^n)}=1\) and \(\varphi\perp T_U\mathcal M_{\rm aff}\).
\end{proposition}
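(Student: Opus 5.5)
We argue by contradiction, following the blueprint in the strategy section and \cite[Section~3]{FZ22}. Fix \(\eta_0>0\) and \(\kappa_0\) small, to be chosen from the spectral gap constant. Suppose the estimate fails for some \(0<\kappa\le\kappa_0\). Then there are \(\varepsilon_k\to0\) and \(\varphi_k\perp T_U\mathcal M_{\rm aff}\) with \(\|\nabla\varphi_k\|_{L^p}=1\) such that
\[
\mathcal N_{\rm aff}^{\varepsilon_k,\kappa}(\varphi_k)+\tfrac{c_\kappa}{2}\alpha_{n,p}\varepsilon_k^{-2}\langle\mathfrak R_{\xi,\varepsilon_k}(\varphi_k)\rangle<\tfrac p2\bigl((p^*-1)\Lambda+\eta_0\bigr)\int U^{p^*-2}\varphi_k^2 .
\]
\textbf{Step 1 (boundedness of the right side).} The left side is bounded below by \(-C\operatorname{Var}_\xi L_\xi(\varphi_k)+c\varepsilon_k^{-2}\langle\mathfrak R\rangle\), since \(\mathfrak B\ge0\) by \eqref{eq:Bge0}. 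The term \(\operatorname{Var}_\xi L_\xi\) is bounded by \eqref{eq:Lxi-Lp-bound}, and the last term is \(\ge c_{n,p}\) by \eqref{eq:R-controls-epsilon2}. Since \(p^*>2\), the weighted integral \(\int U^{p^*-2}\varphi_k^2\) is bounded via Hölder and Sobolev.

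\textbf{Step 2 (normalization and weak limit).} Set \(D_k=\int(|\nabla U|+\varepsilon_k|\nabla\varphi_k|)^{p-2}|\nabla\varphi_k|^2\). Then \(D_k\ge c\) by \eqref{eq:two-scale-energy-positive}, and the contradiction inequality together with \eqref{eq:averaged-R-lower} gives \(D_k\lesssim1+\int U^{p^*-2}\varphi_k^2\). Hence \(D_k\) is bounded above and below. Pass to a subsequence with \(\varphi_k\rightharpoonup\varphi\) in \(\dot W^{1,p}\). By Lemma~\ref{lem:averaged-affine-lsc-low-p}, \(\varphi\in\mathcal H_U\) with \(\|\varphi\|_{\mathcal H_U}^2\le\liminf D_k\). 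More precisely, we split each \(\varphi_k\) into the region \(\{\varepsilon_k|\nabla\varphi_k|\le\delta|\nabla U|\}\) and its complement. On the first region, the integrand of \(\mathfrak B\) is close to the quadratic form integrand, while \(\mathfrak R\) controls the complement. The weighted integrals converge by Lemma~\ref{lem:compact-embedding-high-p}, \(\int U^{p^*-2}\varphi_k^2\to\int U^{p^*-2}\varphi^2\), and \(L_\xi(\varphi_k)\to L_\xi(\varphi)\) uniformly in \(\xi\), since the weights \(|\partial_\xi U|^{p-1}\) lie in \(L^{p'}\) uniformly. The orthogonality \(\varphi\perp T_U\mathcal M_{\rm aff}\) passes to the limit.

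\textbf{Step 3 (key lower semicontinuity).} We aim to show
\[
\liminf_k\Bigl[\alpha_{n,p}(1-\kappa)\langle\mathfrak B_{\xi,\varepsilon_k}(\varphi_k)\rangle+\tfrac{c_\kappa}{2}\alpha_{n,p}\varepsilon_k^{-2}\langle\mathfrak R\rangle\Bigr]\ge\tfrac p2(1-\kappa)\int|\nabla U|^{p-2}\Bigl(|\nabla\varphi|^2+(p-2)\tfrac{(\nabla U\cdot\nabla\varphi)^2}{|\nabla U|^2}\Bigr)+c\,\limsup_k\|\nabla\varphi_k-\nabla\varphi\|^2_{\rm two\text{-}scale}.
\]
This is where the angular identities \eqref{eq:angular-identities-low-p} convert \(\langle\mathfrak B\rangle\) into the \(p\)-Laplacian form. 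This is the main obstacle: the angular average must be taken before the limit, and the remainder \(\mathfrak R\) must absorb the defect both near the zero set of \(\partial_\xi U\) and where \(\varepsilon_k|\nabla\varphi_k|\) is large. I expect this to rely on Lemma~\ref{lem:averaged-affine-lsc-low-p}.

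\textbf{Step 4 (conclusion).} Taking limits in the contradiction inequality gives
\[
\tfrac p2 Q_{\rm aff,p}(\varphi)-C\kappa\|\varphi\|_{\mathcal H_U}^2+c\,\delta_\infty\le\tfrac p2\eta_0\int U^{p^*-2}\varphi^2 ,
\]
where \(\delta_\infty\ge0\) is the weak defect. The cross term \(\int U^{p^*-1}\varphi\) vanishes by orthogonality, because \(U\in T_U\mathcal M_{\rm aff}\). Now apply Proposition~\ref{prop:affine-spectral-gap}, \(Q_{\rm aff,p}(\varphi)\ge c_{\rm sg}\|\varphi\|^2_{\mathcal H_U}\), together with \eqref{eq:weighted-Poincare-used-low}. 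Choosing \(\eta_0,\kappa_0\) small relative to \(c_{\rm sg}/c_0\) forces \(\varphi=0\) and \(\delta_\infty=0\). Then \(\int U^{p^*-2}\varphi_k^2\to0\) and \(\operatorname{Var}_\xi L_\xi(\varphi_k)\to0\), so the contradiction inequality gives \(\varepsilon_k^{-2}\langle\mathfrak R\rangle\to0\). This contradicts \eqref{eq:R-controls-epsilon2}.
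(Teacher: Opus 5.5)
Your proposal is correct and is essentially the paper's argument: contradiction, weak limit, Lemma~\ref{lem:averaged-affine-lsc-low-p} and Lemma~\ref{lem:compact-embedding-high-p} combined with the spectral gap to force the limit to vanish, and then a contradiction with \eqref{eq:R-controls-epsilon2}. The only differences are cosmetic. The paper rescales by \(\rho_j\) so that the two-scale integral equals one, whereas you keep \(\|\nabla\varphi_k\|_{L^p(\mathbb R^n)}=1\) and show that \(D_k\) is bounded above and below; this is equivalent because the failure inequality is invariant under that rescaling. The defect term in your Step~3 is neither established nor needed: \eqref{eq:B-lower-cts} together with \(\mathfrak R_{\xi,\varepsilon}\ge0\) already gives your Step~4 inequality without the \(\delta_\infty\) term, and that is all your conclusion uses.
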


Assume \(\varphi\perp T_U\mathcal M_{\rm aff}\), and choose \(0<\kappa\le\kappa_0\) so that \(\frac
p2\eta_0-\kappa\Lambda>0\). Then \eqref{eq:coercivity-high-p} and
\eqref{eq:deficit-lower-high-p-before-prop} give
\begin{equation}
  \label{eq:deficit-after-high-coercivity}
  \begin{aligned}
    \delta_{\rm aff}(U+\varepsilon\varphi) \ge c\varepsilon^2 \int_{\mathbb R^n} U^{p^*-2}\varphi^2\,dx + \frac{c_\kappa}{2} \alpha_{n,p}
      \left\langle \mathfrak R_{\xi,\varepsilon}(\varphi) \right\rangle_{\mathbb S^{n-1}} - C_\kappa\varepsilon^{p^*} \int_{\mathbb R^n}
      |\varphi|^{p^*}\,dx .
  \end{aligned}
\end{equation}
Since \(p^*>2\), the Sobolev inequality and \eqref{eq:R-controls-epsilon2} give, for sufficiently small
\(\varepsilon_0\),
\begin{equation}
  \label{eq:absorb-Lpstar-remainder-high-p}
  \begin{aligned}
    C_\kappa\varepsilon^{p^*} \int_{\mathbb R^n} |\varphi|^{p^*}\,dx &\le C_{n,p,\kappa}\varepsilon^{p^*} \le \frac{c_\kappa}{4}
      \alpha_{n,p} \left\langle \mathfrak R_{\xi,\varepsilon}(\varphi) \right\rangle_{\mathbb S^{n-1}}.
  \end{aligned}
\end{equation}
Combining \eqref{eq:deficit-after-high-coercivity}, \eqref{eq:absorb-Lpstar-remainder-high-p}, and
\eqref{eq:R-controls-epsilon2}, we conclude that
\begin{equation}
  \label{eq:local-lower-high-p}
  \begin{aligned}
    \delta_{\rm aff}(U+\varepsilon\varphi) &\ge c\varepsilon^2 \int_{\mathbb R^n} U^{p^*-2}\varphi^2\,dx + \frac{c_\kappa}{4} \alpha_{n,p}
      \left\langle \mathfrak R_{\xi,\varepsilon}(\varphi) \right\rangle_{\mathbb S^{n-1}} \ge c\varepsilon^2.
  \end{aligned}
\end{equation}

\section{Nonlinear coercivity near the Talenti extremal}
\label{sec:nonlinear-coercivity}

\subsection{Spectral gap and compactness}

We begin with the spectral gap for \(Q_{\rm aff,p}\) and the lower semicontinuity estimate
\eqref{eq:B-lower-cts}.

\begin{proposition}
  \label{prop:affine-kernel-n-ge3-low-p}
  Let \(n\ge3\) and \(1<p<2\). Then \(\ker_{\mathcal H_U}Q_{\rm aff,p}=T_U\mathcal M_{\rm aff}\).
\end{proposition}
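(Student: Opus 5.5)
We prove the two inclusions separately. The inclusion \(T_U\mathcal M_{\rm aff}\subset\ker_{\mathcal H_U}Q_{\rm aff,p}\) is standard. Since \(\mathcal M_{\rm aff}\) consists of minimizers of \(\delta_{\rm aff}\ge0\), and \(\delta_{\rm aff}\) is smooth along the finite-dimensional curves \(t\mapsto c(t)\lambda(t)^{(n-p)/p}U(\lambda(t)A(t)(\cdot-x_0(t)))\), each generator \(Z\) of \(T_U\mathcal M_{\rm aff}\) satisfies \(Q_{\rm aff,p}(Z)=0\) and is a minimizer of the nonnegative form. Note that \(Q_{\rm aff,p}\ge0\) on \(\mathcal H_U\), because \eqref{eq:affine-deficit-expansion-low-p} holds on the dense class \(C^\infty_{c,0}\) and \(Q_{\rm aff,p}\) is continuous. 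Hence \(Z\) lies in the kernel of the associated bilinear form. One must also check that each \(Z\) lies in \(\mathcal H_U\); this follows from the explicit decay of \(U\) and the behaviour of \(\nabla U\) near \(0\), exactly as in \cite[Section~3]{FZ22}.

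For the reverse inclusion, we follow \cite[Section~5]{FLZ26B} and verify that nothing there used \(p\ge2\).

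\textbf{Step 1.} Take \(\phi\in\ker_{\mathcal H_U}Q_{\rm aff,p}\). Since \(Q_{\rm aff,p}=Q_{\rm Sob,p}-\mathcal R_p\) with \(\mathcal R_p\ge0\), the kernel condition is a linear Euler--Lagrange equation: the linearized \(p\)-Laplacian \(\mathcal L\phi\) minus the Sobolev terms equals \(\alpha_{n,p}\frac{\tau+1}{pA_0}\) times the first variation of \(\operatorname{Var}_\xi(L_\xi(\cdot))\). The key observation concerns \(\xi\mapsto L_\xi(\phi)\). Writing \(\partial_\xi U=U'(r)\,(x/r)\cdot\xi\) and using rotation invariance, \(L_\xi(\phi)\) is obtained from \(\phi\) by a radial kernel. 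It therefore depends only on the spherical-harmonic components of \(\phi\) of degrees \(0\) and \(2\): after integrating in \(r\), it becomes the Funk--Hecke multiplier of \(\theta\mapsto|\theta\cdot\xi|^{p-2}(\theta\cdot\xi)\) against \(\partial_r\phi\) and against the tangential gradient, and all odd degrees vanish by parity. Degree \(\ge4\) components might still contribute through the Funk--Hecke coefficients of \(|t|^{p}\). Thus \(\mathcal R_p\) is a finite-rank perturbation that acts only on a restricted set of even degrees \(k\).

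\textbf{Step 2.} Decompose \(\phi=\sum_{k}\sum_j f_{k,j}(r)Y_{k,j}\). The Sobolev part \(Q_{\rm Sob,p}\) diagonalizes in \(k\) into radial ODE forms. For \(1<p<2\) these are still governed by \cite[Section~3]{FZ22}, whose kernel analysis of \(Q_{\rm Sob,p}\) on \(\mathcal H_U\) covers all \(1<p<n\). The mixed term \((p-2)(\nabla U\cdot\nabla\phi)^2/|\nabla U|^2=(p-2)(\partial_r\phi)^2\) also preserves the decomposition. In each degree \(k\) where \(\mathcal R_p\) vanishes, the kernel equals the known kernel of \(Q_{\rm Sob,p}\): \(Z_0\) for \(k=0\), the \(\partial_iU\) for \(k=1\), and \(\{0\}\) otherwise. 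Together with \(U\) (which \(Q\) annihilates through the last term), this is the Sobolev part of the tangent space.

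\textbf{Step 3.} In the remaining degree, \(k=2\) (and any other degree where the Funk--Hecke coefficient of \(\mathcal R_p\) is nonzero), we compute \(\mathcal R_p\) explicitly on \(f(r)Y_2\). It takes the form \(c_{n,p}\bigl(\int_0^\infty |U'|^{p-1}(\ldots f'+\ldots f/r)r^{n-1}dr\bigr)^2\), a rank-one form per harmonic. The reduced problem is then minimizing the ODE form \(q_k(f)\) of \(Q_{\rm Sob,p}\), which is positive definite for \(k\ge2\), minus a rank-one term \((\ell(f))^2\). The kernel is nontrivial if and only if \(\sup \ell(f)^2/q_k(f)=1\), and in that case it is spanned by the Riesz representer \(q_k^{-1}\ell\). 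We know \(f=rU'(r)\), corresponding to \(x\cdot B\nabla U\), lies in the kernel. Hence the representer is \(rU'\), and the kernel in degree \(2\) is exactly \(\{x\cdot B\nabla U\}\). In degrees \(k\ge4\), the same argument yields a contradiction unless the ratio is \(<1\). We would show this by comparison: the multiplier decreases in \(k\) while \(q_k\) increases, so the supremum ratio is strictly smaller than at \(k=2\).

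\textbf{Main obstacle.} The main obstacle is justifying the harmonic separation and the Funk--Hecke computation in \(\mathcal H_U\) when \(1<p<2\). The weight \(|\nabla U|^{p-2}\) blows up at the origin, where \(\nabla U=0\), and \(|\partial_\xi U|^{p-2}\) is singular on hyperplanes. The strict monotonicity in \(k\) for \(k\ge4\) also has to be checked. For the first point we would work with \(C^1_{c,0}\) approximants, which are constant near \(0\), and pass to limits using \eqref{eq:Lxi-HU-bound}. For the monotonicity we would try the explicit Gegenbauer coefficients of \(|t|^{p-2}t\) and show they decay in \(k\).
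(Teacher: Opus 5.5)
\textbf{There is a genuine gap: the even degrees $2m\ge4$ are not excluded from the kernel.} Your architecture matches the one the paper imports from \cite[Section~5]{FLZ26B}. You expand in spherical harmonics, note that $\operatorname{Var}_\xi(L_\xi(\cdot))$ contributes one rank-one term per harmonic of even degree $\ge2$, and use affine invariance to identify the degree-two kernel with $rU'(r)\,\theta\cdot B\theta$. The missing step is exactly what the paper's proof consists of: a direct computation showing that the coefficients $\nu_m$ of \cite[Subsection~5.7]{FLZ26B} satisfy $\nu_1=1$ and $0<\nu_{m+1}<\nu_m$ for $1<p<2$.

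Your comparison argument (the Funk--Hecke multiplier decreases in $k$ while $q_k$ increases, so the ratio is smaller than at $k=2$) does not work, because the functional is not fixed. In degree $k$ it is $\ell_k(f)=\int_0^\infty|U'|^{p-2}U'\bigl(f'+\frac{k(k+n-2)}{p}\frac fr\bigr)r^{n-1}\,dr$, and its coefficient on $f/r$ grows like $k^2$. An integration by parts using $-\Delta_pU=\Lambda U^{p^*-1}$, and the fact that $rU'=Z_0-\frac{n-p}{p}U$ solves the linearized equation up to $p\Lambda U^{p^*-1}$, gives $\ell_k(f)=\frac1p\,q_k(rU',f)$ in every degree $k\ge2$ (with consistent normalizations). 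Hence $\sup_{f\ne0}\ell_k(f)^2/q_k(f)=q_k(rU')/p^2$, which is proportional to $k(k+n-2)-(n-p)$; compare \eqref{eq:dual-norm-R-n=2} for $n=2$. This dual norm increases with $k$. The real question is whether the decay of the squared Funk--Hecke coefficients beats this quadratic growth for every $k\ge2$, not merely asymptotically. Answering it needs this exact dual-norm identity together with the explicit coefficients, which give a closed formula for $\nu_k$ whose monotonicity is then checked by hand, as the appendix does for $n=2$. Showing only that the coefficients decay, as you propose, is not enough.

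\textbf{Step~1 misidentifies both the relevant kernel and the structure of $\mathcal R_p$.} Write $\partial_\xi\phi=(\theta\cdot\xi)\partial_r\phi+r^{-1}\xi_T\cdot\nabla_\theta\phi$, where $\xi_T=\xi-(\theta\cdot\xi)\theta$. Since $|\theta\cdot\xi|^{p-2}(\theta\cdot\xi)\,\xi_T=\frac1p\nabla_\theta|\theta\cdot\xi|^p$, integrating by parts on the sphere shows that the Funk--Hecke kernel is $|t|^p$. It is not the odd function $|t|^{p-2}t$, whose coefficients vanish in every even degree, so their decay carries no information here. Because $p$ is not an even integer, the coefficients of $|t|^p$ are nonzero in every even degree. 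Consequently $L_\xi(\phi)$ does not depend only on the degree-$0$ and degree-$2$ components, and $\mathcal R_p$ is not finite rank. This infinite family of rank-one perturbations is why the quantitative bound $\nu_k<1$ for all $k\ge2$ is the heart of the proof.
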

\begin{proof}
  We follow \cite[Section~5]{FLZ26B}, working first on \(C_{c,0}^{\infty}(\mathbb R^n)\) and then
  extending to \(\mathcal H_U\) by density and continuity. For \(1<p<2\), direct computation shows that
  the coefficients \(\nu_m\) defined in \cite[Subsection~5.7]{FLZ26B} satisfy
  \[
    \nu_1=1, \qquad 0<\nu_{m+1}<\nu_m \quad\text{for every }m\ge1.
  \]
  Thus \(\nu_m<1\) for \(m\ge2\), and the argument of \cite[Subsection~5.8]{FLZ26B} gives
  \[
    \ker_{\mathcal H_U}Q_{\rm aff,p} =T_U\mathcal M_{\rm aff}.
  \]
  This completes the proof.
\end{proof}

The kernel description is therefore available in every dimension: for \(n\ge3\) by
Proposition~\ref{prop:affine-kernel-n-ge3-low-p}, and for \(n=2\) by
Proposition~\ref{prop:affine-kernel-n=2}. We now upgrade this nondegeneracy to a uniform coercive gap.

\begin{proposition}
  \label{prop:affine-spectral-gap}
  Let \(1<p<2\). There exists \(c_{\rm sg}=c_{\rm sg}(n,p)>0\) such that \(Q_{\rm aff,p}(\phi)\ge c_{\rm
  sg}\|\phi\|_{\mathcal H_U}^2\) for every \(\phi\in\mathcal H_U\) with \(\phi\perp T_U\mathcal M_{\rm
  aff}\).
\end{proposition}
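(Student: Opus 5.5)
We argue by contradiction and use compactness, combining the kernel identity \(\ker_{\mathcal H_U}Q_{\rm aff,p}=T_U\mathcal M_{\rm aff}\) (Proposition~\ref{prop:affine-kernel-n-ge3-low-p} for \(n\ge3\), Proposition~\ref{prop:affine-kernel-n=2} for \(n=2\)) with the structure of \(Q_{\rm aff,p}\). It splits as a principal part, which is coercive in \(\mathcal H_U\), plus a weakly continuous perturbation.

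Write
\[
  Q_{\rm aff,p}(\phi)=\mathcal Q_0(\phi)-K(\phi),
\]
where
\[
  \mathcal Q_0(\phi)=\int|\nabla U|^{p-2}\Bigl(|\nabla\phi|^2+(p-2)\tfrac{(\nabla U\cdot\nabla\phi)^2}{|\nabla U|^2}\Bigr)dx .
\]
By \eqref{eq:principal-equivalence-gap-proof}, \(\mathcal Q_0(\phi)\ge(p-1)\|\phi\|_{\mathcal H_U}^2\). The remainder \(K\) collects \((p^*-1)\Lambda\int U^{p^*-2}\phi^2\), the squared pairing \(\bigl(\int U^{p^*-1}\phi\bigr)^2\) (which vanishes on \(\phi\perp U\)), and \(\mathcal R_p(\phi)\).

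The key claim is that \(K\) is weakly sequentially continuous on \(\mathcal H_U\). For the first term this follows from the compact embedding \(\mathcal H_U\hookrightarrow\hookrightarrow L^2(U^{p^*-2}dx)\) \cite[Proposition~3.2]{FZ22}. For \(\mathcal R_p\), each \(L_\xi\) is a bounded linear functional on \(\mathcal H_U\) by \eqref{eq:Lxi-HU-bound}, uniformly in \(\xi\). Hence \(L_\xi(\phi_k)\to L_\xi(\phi)\) pointwise in \(\xi\) when \(\phi_k\rightharpoonup\phi\), with a uniform bound, and dominated convergence on \(\mathbb S^{n-1}\) gives convergence of \(\langle L_\xi\rangle\) and \(\langle L_\xi^2\rangle\), hence of \(\operatorname{Var}_\xi\). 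I would check that \(L_\xi(\phi)\) is well defined on \(\mathcal H_U\) (it is, by density and \eqref{eq:Lxi-HU-bound}) and that \(\xi\mapsto L_\xi(\phi)\) is measurable (continuity in \(\xi\) for smooth \(\phi\), then uniform limits).

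Next we need \(Q_{\rm aff,p}\ge0\) on \(\mathcal H_U\). Since \(\delta_{\rm aff}\ge0\) by \eqref{eq:affine-Sobolev}, the expansion \eqref{eq:affine-deficit-expansion-low-p} gives \(Q_{\rm aff,p}(\phi)\ge0\) for \(\phi\in C^\infty_{c,0}\), and this extends to \(\mathcal H_U\) by continuity.

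Now suppose the gap fails. Then there are \(\phi_k\in\mathcal H_U\), \(\phi_k\perp T_U\mathcal M_{\rm aff}\), with \(\|\phi_k\|_{\mathcal H_U}=1\) and \(Q_{\rm aff,p}(\phi_k)\to0\). Since \(\mathcal H_U\) is a Hilbert space, pass to a subsequence with \(\phi_k\rightharpoonup\phi\). The orthogonality conditions pass to the limit, because \(U^{p^*-2}Z\) defines a continuous functional on \(L^2(U^{p^*-2}dx)\): the tangent fields satisfy \(|Z|\lesssim U\), so \(\int U^{p^*-2}Z^2<\infty\). Weak lower semicontinuity of the convex form \(\mathcal Q_0\), together with continuity of \(K\), gives
\[
  0\le Q_{\rm aff,p}(\phi)\le\liminf_k Q_{\rm aff,p}(\phi_k)=0 .
\]
A nonnegative quadratic form vanishing at \(\phi\) has \(\phi\) in its kernel, by the Cauchy--Schwarz inequality for the associated bilinear form. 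So \(\phi\in T_U\mathcal M_{\rm aff}\) and \(\phi\perp T_U\mathcal M_{\rm aff}\) in the weighted \(L^2\) inner product, which forces \(\phi=0\). Then \(K(\phi_k)\to K(0)=0\), so
\[
  \mathcal Q_0(\phi_k)=Q_{\rm aff,p}(\phi_k)+K(\phi_k)\to0,
\]
contradicting \(\mathcal Q_0(\phi_k)\ge p-1\).

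The main obstacle is the weak continuity of \(\mathcal R_p\), in particular making sure that \(\operatorname{Var}_\xi(L_\xi)\) is genuinely compact rather than merely bounded. I expect the uniform-in-\(\xi\) bound \eqref{eq:Lxi-HU-bound} plus pointwise convergence to suffice, since each \(L_\xi\) is a single linear functional and so is weakly continuous. A secondary point is that the tangent fields must lie in \(\mathcal H_U\), which is needed for the kernel statement to make sense. For \(p\le (n+2)/(n+1)\) this requires the behaviour near the origin built into the definition of \(\mathcal H_U\), and it is presumably already covered by the kernel propositions.
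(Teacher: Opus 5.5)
Your proposal is correct and takes essentially the same approach as the paper. The paper combines the kernel identity \(\ker_{\mathcal H_U}Q_{\rm aff,p}=T_U\mathcal M_{\rm aff}\) with a compactness argument that it omits, citing Part~I. Your contradiction argument (coercive principal part \(\mathcal Q_0\), weakly continuous remainder via the compact embedding into \(L^2(\mathbb R^n;U^{p^*-2}dx)\) and the uniform-in-\(\xi\) bound on \(L_\xi\), nonnegativity of \(Q_{\rm aff,p}\) from the deficit expansion, and passage of the weighted orthogonality to the weak limit) is that argument written out in full.
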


\begin{proof}
  For \(n\ge3\), the kernel computation in \cite[Section~5]{FLZ26B} remains valid for \(1<p<2\), while
  the case \(n=2\) is established in Proposition~\ref{prop:affine-kernel-n=2}. Hence
  \[
    \ker_{\mathcal H_U}Q_{\rm aff,p} = T_U\mathcal M_{\rm aff}.
  \]
  The spectral gap now follows by the same compactness argument as in \cite[Proposition~3.1]{FLZ26B}. We
  omit the proof.
\end{proof}

\begin{lemma}
  \label{lem:compact-embedding-high-p}
  If \(\frac{2n}{n+2}<p<n\), then
  \begin{equation}
    \label{eq:compact-embedding-high-p}
    \dot W^{1,p}(\mathbb R^n) \hookrightarrow\hookrightarrow L^2(\mathbb R^n;U^{p^*-2}dx).
  \end{equation}
\end{lemma}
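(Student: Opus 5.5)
The plan is to show that a bounded sequence in \(\dot W^{1,p}(\mathbb R^n)\) has a subsequence converging strongly in \(L^2(\mathbb R^n;U^{p^*-2}dx)\). This follows from three ingredients: continuity of the embedding, uniform smallness of the weighted \(L^2\) mass near infinity, and local compactness (Rellich). Recall \(U(x)\sim |x|^{-(n-p)/(p-1)}\) as \(|x|\to\infty\), so
\[
  U^{p^*-2}\sim |x|^{-\frac{n-p}{p-1}(p^*-2)}.
\]
The condition \(p>2n/(n+2)\) is exactly \(p^*>2\), so the weight \(U^{p^*-2}\) is bounded and decays at infinity.

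\emph{Continuity.} By H\"older with exponents \(p^*/2\) and \(p^*/(p^*-2)\),
\[
  \int_{\mathbb R^n} U^{p^*-2}\phi^2\,dx \le \|U\|_{L^{p^*}}^{p^*-2}\|\phi\|_{L^{p^*}}^2 \le C\|\nabla\phi\|_{L^p}^2,
\]
using the Sobolev inequality \eqref{eq:classical-Sobolev}. The same estimate restricted to \(\{|x|>R\}\) gives
\[
  \int_{|x|>R} U^{p^*-2}\phi^2\,dx \le \Bigl(\int_{|x|>R}U^{p^*}dx\Bigr)^{(p^*-2)/p^*}\|\phi\|_{L^{p^*}}^2 .
\]
Since \(U\in L^{p^*}\), the first factor tends to \(0\) as \(R\to\infty\). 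This holds uniformly over the unit ball of \(\dot W^{1,p}\), which is the tail control.

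\emph{Compactness.} Take \(\phi_k\rightharpoonup\phi\) weakly in \(\dot W^{1,p}\). Then \((\phi_k)\) is bounded in \(W^{1,p}(B_R)\), since its \(L^p(B_R)\) norm is controlled by \(\|\phi_k\|_{L^{p^*}}\). By Rellich--Kondrachov, \(\phi_k\to\phi\) in \(L^q(B_R)\) for every \(q<p^*\), after a diagonal subsequence over \(R\in\mathbb N\). Since \(p^*>2\) we may take \(q=2\), and \(U^{p^*-2}\le U(0)^{p^*-2}\) is bounded, so \(\int_{B_R}U^{p^*-2}|\phi_k-\phi|^2\to0\). Combining this with the uniform tail bound applied to \(\phi_k-\phi\), which is bounded in \(\dot W^{1,p}\), gives \(\int_{\mathbb R^n} U^{p^*-2}|\phi_k-\phi|^2\,dx\to0\). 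The weak limit also identifies the strong limit, so weakly convergent sequences yield convergence of the weighted integrals, as used later.

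\emph{Main obstacle.} The only delicate point is the exponent bookkeeping at the endpoint. For \(p^*\le2\) the H\"older split fails, and in that regime \(\varphi^2U^{p^*-2}\) need not be integrable for \(\varphi\in\dot W^{1,p}\); this is why the lemma is restricted to \(p^*>2\) and why \(\mathcal P_\varepsilon\) is used in the other range. Otherwise the argument is routine, and the restriction \(p<n\) only ensures that \(p^*\) is finite.
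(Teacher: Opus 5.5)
Your proposal is correct and essentially matches the paper's proof. Both arguments obtain strong convergence in \(L^2(B_R)\) from Rellich--Kondrachov (using \(p^*>2\)) and the boundedness of \(U^{p^*-2}\) on \(B_R\), and control the tail uniformly via H\"older's inequality with exponents \(p^*/2\) and \(p^*/(p^*-2)\), the Sobolev inequality, and \(U\in L^{p^*}(\mathbb R^n)\).
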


\begin{proof}
  Let \(\phi_j\rightharpoonup\phi\) in \(\dot W^{1,p}(\mathbb R^n)\). By the Sobolev inequality,
  \((\phi_j)\) is bounded in \(L^{p^*}(\mathbb R^n)\). Since \(p^*>2\), the Rellich--Kondrachov theorem
  gives a subsequence such that
  \begin{equation}
    \label{eq:local-L2-convergence-high-p}
    \phi_j\to\phi \qquad \text{strongly in }L^2(B_R)
  \end{equation}
  for every \(R>0\).

  By H\"older's inequality and the Sobolev inequality,
  \begin{equation}
    \label{eq:weighted-tail-high-p}
    \begin{aligned}
      \int_{\mathbb R^n\setminus B_R} U^{p^*-2}|\phi_j-\phi|^2\,dx &\le \left( \int_{\mathbb R^n\setminus B_R} U^{p^*}\,dx
        \right)^{1-\frac{2}{p^*}} \left( \int_{\mathbb R^n} |\phi_j-\phi|^{p^*}\,dx \right)^{\frac{2}{p^*}} \\
      &\le C \left( \int_{\mathbb R^n\setminus B_R} U^{p^*}\,dx \right)^{1-\frac{2}{p^*}},
    \end{aligned}
  \end{equation}
  with \(C\) independent of \(j\). Since \(U\in L^{p^*}(\mathbb R^n)\) and \(U^{p^*-2}\) is bounded on
  \(B_R\), \eqref{eq:local-L2-convergence-high-p} and \eqref{eq:weighted-tail-high-p} imply
  \(\phi_j\to\phi\) strongly in \(L^2(\mathbb R^n;U^{p^*-2}dx)\).
\end{proof}

\begin{lemma}
  \label{lem:averaged-affine-lsc-low-p}
  Let \(1<p<2\). Assume that \(\delta_j\downarrow0\), \(\psi_j\rightharpoonup\psi\) in \(\dot
  W^{1,p}(\mathbb R^n)\), and
  \[
    \sup_j \int_{\mathbb R^n} \bigl( |\nabla U|+\delta_j|\nabla\psi_j| \bigr)^{p-2} |\nabla\psi_j|^2\,dx <\infty.
  \]
  Then \(\psi\in\mathcal H_U\). Moreover,
  \begin{equation}
    \label{eq:B-lower-cts}
    \liminf_{j\to\infty} \alpha_{n,p} \left\langle \mathfrak B_{\xi,\delta_j}(\psi_j) \right\rangle_{\mathbb S^{n-1}} \ge \frac p2
      \int_{\mathbb R^n} |\nabla U|^{p-2} \left( |\nabla\psi|^2 + (p-2) \frac{(\nabla U\cdot\nabla\psi)^2} {|\nabla U|^2} \right)\,dx.
  \end{equation}
\end{lemma}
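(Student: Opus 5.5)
The plan is to prove the two assertions separately. Membership \(\psi\in\mathcal H_U\) comes from a local weak-compactness argument on the region where \(|\nabla U|\) is bounded below and \(\delta_j|\nabla\psi_j|\) is small. The lower bound \eqref{eq:B-lower-cts} comes from a pointwise lower bound on the integrand of \(\mathfrak B_{\xi,\delta_j}\), followed by weak lower semicontinuity of a convex quadratic functional.

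\emph{Step 1 (localization and truncation).} Set \(M=\sup_j\int(|\nabla U|+\delta_j|\nabla\psi_j|)^{p-2}|\nabla\psi_j|^2\). Fix \(K>0\) and \(0<r<R\), and let \(D=B_R\setminus B_r\), on which \(|\nabla U|\ge c_D>0\). Put
\[
E_j=\{x\in D:\ \delta_j|\nabla\psi_j|\le K\}.
\]
On \(E_j\) the weight is comparable to \(1\), so \(\int_{E_j}|\nabla\psi_j|^2\le C_{D,K}M\). On \(D\setminus E_j\) we have \(|\nabla\psi_j|\ge K/\delta_j\), so by Chebyshev
\[
|D\setminus E_j|\le (\delta_j/K)^p\|\nabla\psi_j\|_p^p\to0.
\]
Hence \(g_j=\nabla\psi_j\mathbf 1_{E_j}\) is bounded in \(L^2(D)\). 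Since \(\nabla\psi_j\mathbf 1_{D\setminus E_j}\) is bounded in \(L^p\) on sets of vanishing measure, one hopes it tends to \(0\) weakly in \(L^1\). This is the delicate point, because uniform integrability is not automatic from an \(L^p\) bound. I would instead use the \(M\) bound on \(D\setminus E_j\): there the weight is \(\approx(\delta_j|\nabla\psi_j|)^{p-2}\), so
\[
\int_{D\setminus E_j}\delta_j^{p-2}|\nabla\psi_j|^p\le CM,
\qquad\text{hence}\qquad
\int_{D\setminus E_j}|\nabla\psi_j|^p\le CM\delta_j^{2-p}\to0.
\]
Thus \(\nabla\psi_j-g_j\to0\) strongly in \(L^p(D)\). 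Consequently \(g_j\rightharpoonup\nabla\psi\) in \(L^1(D)\), and by boundedness also in \(L^2(D)\).

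\emph{Step 2 (\(\psi\in\mathcal H_U\)).} By weak lower semicontinuity of the weighted \(L^2\) norm, and since the weight converges to \(|\nabla U|^{p-2}\) uniformly on \(E_j\) as \(\delta_jK\)-scale corrections vanish,
\[
\int_D|\nabla U|^{p-2}|\nabla\psi|^2\le \liminf_j\int_{E_j}(|\nabla U|+\delta_j|\nabla\psi_j|)^{p-2}|\nabla\psi_j|^2\cdot(1+o_K(1))\le CM.
\]
To get uniformity, I would note that on \(E_j\) the ratio of the two weights tends to \(1\) only if \(K\delta_j\ll\) fails. So I take \(E_j\) with threshold \(\delta_j|\nabla\psi_j|\le\eta|\nabla U|\), let \(\eta\to0\) afterwards, and rerun Step 1 with this threshold. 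Letting \(r\to0\) and \(R\to\infty\) gives \(\|\psi\|_{\mathcal H_U}^2<\infty\). Membership in the closure \(\mathcal H_U\), which requires approximation by \(C^1_{c,0}\), is then obtained by the standard cutoff and mollification argument of \cite[Section~3]{FZ22}, using \(\psi\in L^{p^*}\) and the weighted Poincaré inequality \eqref{eq:weighted-Poincare-used-low}. I expect this density step to be the main obstacle near the origin for \(p\le(n+2)/(n+1)\). I would cite \cite{FZ22}, where the space of finite-norm \(\dot W^{1,p}\) functions is identified with \(\mathcal H_U\).

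\emph{Step 3 (lower bound).} Fix \(\xi\). On the good set \(\{\delta_j|\partial_\xi\psi_j|\le\eta|\partial_\xi U|\}\) we have the following, up to factors \(1+O(\eta)\):
\[
|w_{\xi,\delta_j}|^{p-2}=|\partial_\xi U|^{p-2},\qquad
\frac{|\partial_\xi U|-|\partial_\xi U+\delta_j\partial_\xi\psi_j|}{\delta_j}=-\operatorname{sgn}(\partial_\xi U)\,\partial_\xi\psi_j+O(\eta)|\partial_\xi\psi_j|.
\]
So the integrand is at least \((1-C\eta)\frac p2(p-1)|\partial_\xi U|^{p-2}|\partial_\xi\psi_j|^2\ge0\), and it equals the quadratic form \(\frac p2(p-1)|\partial_\xi U|^{p-2}(\partial_\xi\psi_j)^2\) up to \(C\eta\) times this. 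By \eqref{eq:Bge0} the integrand is nonnegative on the bad set, so the bad set can be discarded. Restricting to \(D\) and applying weak \(L^2\) lower semicontinuity with the good-set truncations from Step 1, I obtain
\[
\liminf_j \mathfrak B_{\xi,\delta_j}(\psi_j)\ge (1-C\eta)\frac{p(p-1)}2\int_D|\partial_\xi U|^{p-2}|\partial_\xi\psi|^2 .
\]
Fatou in \(\xi\) then bounds the liminf of the spherical average from below by the average of this. By the second identity in \eqref{eq:angular-identities-low-p},
\[
\alpha_{n,p}(p-1)\langle|\partial_\xi U|^{p-2}(\xi\cdot\nabla\psi)^2\rangle_\xi=|\nabla U|^{p-2}\Bigl(|\nabla\psi|^2+(p-2)\frac{(\nabla U\cdot\nabla\psi)^2}{|\nabla U|^2}\Bigr).
\]
Letting \(\eta\to0\), \(r\to0\) and \(R\to\infty\) gives \eqref{eq:B-lower-cts}.

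One subtlety remains: the set where \(\partial_\xi U\) is small, near the hyperplane \(\xi^\perp\). There the good-set condition is restrictive, but the discarded part is nonnegative, and as \(\eta\to0\) the good set increases to full measure in the limit. Hence the lower bound is unaffected.
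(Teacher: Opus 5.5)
Your skeleton is the paper's: a good set where \(\delta_j|\nabla\psi_j|\) is small relative to \(|\nabla U|\), Chebyshev for its complement, weak \(L^2\) convergence of truncated gradients, the bound \(|w_{\xi,\delta_j}|^{p-2}\le(1+(p-1)\eta)|\partial_\xi U|^{p-2}\), and the angular identity. Your Step 1 is fine. The uniform-integrability worry is unfounded, since \(\int_E|f|\le\|f\|_{L^p}|E|^{1/p'}\) for \(p>1\), which is what the paper uses. However, two steps do not go through as written.

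\emph{Step 3.} The pointwise bound \((1-C\eta)\frac p2(p-1)|\partial_\xi U|^{p-2}|\partial_\xi\psi_j|^2\) needs \(\delta_j|\partial_\xi\psi_j|\le\eta|\partial_\xi U|\). The Step 1 set \(\{\delta_j|\nabla\psi_j|\le\eta|\nabla U|\}\) does not give this near the hyperplane \(\xi\cdot x=0\), where \(|\partial_\xi U|\ll|\nabla U|\). There, when \(\delta_j|\partial_\xi\psi_j|\gg|\partial_\xi U|\), the integrand is only of order \(|\partial_\xi U|^{p-1}|\partial_\xi\psi_j|/\delta_j\). So lower semicontinuity cannot be run with the Step 1 truncations.

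For \(\xi\)-dependent truncations you have neither an \(L^2\) bound nor weak convergence. Your \(M\)-bound argument does not apply on \(\{\delta_j|\partial_\xi\psi_j|>\eta|\partial_\xi U|\}\), because there \(\delta_j|\nabla\psi_j|\) need not dominate \(|\nabla U|\). In addition, the weight \(|\partial_\xi U|^{p-2}\) is unbounded. Your closing remark does not fix this; in fact the good set shrinks, not grows, as \(\eta\downarrow0\).

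The missing device is the paper's cone restriction \(\Gamma_\sigma(x)=\{\xi:|\partial_\xi U(x)|\ge\sigma|\nabla U(x)|\}\), combined with the \(\xi\)-independent threshold \(\delta_j|\nabla\psi_j|\le\eta\sigma|\nabla U|\). On that set:
the directional smallness holds for every \(\xi\in\Gamma_\sigma(x)\);
the coefficient \(\mathbf 1_{\Gamma_\sigma(x)}|\partial_\xi U|^{p-2}\le\sigma^{p-2}|\nabla U|^{p-2}\) is bounded on your annulus \(D\);
everything else is discarded by \eqref{eq:Bge0}.
One then sends \(\eta\downarrow0\) and afterwards \(\sigma\downarrow0\) by monotone convergence.

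\emph{Step 2.} You only obtain \(\int|\nabla U|^{p-2}|\nabla\psi|^2<\infty\). You defer the approximation by \(C^1_{c,0}\) to \cite{FZ22}, but the tools you name would not give it. Inequality \eqref{eq:weighted-Poincare-used-low} is stated on \(\mathcal H_U\), so invoking it for \(\psi\) is circular. Its weight is also too weak at infinity: the cutoff error is \(R^{-2}\int_{B_{2R}\setminus B_R}|\nabla U|^{p-2}\psi^2\,dx\), and on that annulus \(R^{-2}|\nabla U|^{p-2}\asymp|x|^{p/(p-1)}U^{p^*-2}\). Knowing \(\psi\in L^{p^*}\) alone does not make this error vanish either.

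The paper proves, along rays, the Hardy inequality \(\int_{|x|>R}|x|^{\frac{(n-1)(2-p)}{p-1}-2}\psi^2\,dx\le C\int_{|x|>R}|\nabla U|^{p-2}|\nabla\psi|^2\,dx\). It uses that \(\psi(r\theta)\) has a limit as \(r\to\infty\), by the finite weighted norm and Cauchy--Schwarz, and that this limit must be \(0\) since \(\psi\in L^{p^*}\). The origin, which you expected to be the obstacle, is the easy end. Replace \(\psi\) on \(B_\rho\) by its mean over \(B_{2\rho}\setminus B_\rho\) and apply the scaled Poincar\'e inequality on that annulus. This costs \(\lesssim\int_{B_{2\rho}}|\nabla U|^{p-2}|\nabla\psi|^2\,dx\to0\).
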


\begin{proof}
  Fix \(R>1\), \(0<\sigma<1\), and \(0<\eta<1/2\). Set \(A_R=B_R\setminus\overline{B_{1/R}}\) and define
  \[
    \begin{aligned}
      \Gamma_\sigma(x) &= \left\{ \xi\in\mathbb S^{n-1}: |\partial_\xi U(x)| \ge \sigma|\nabla U(x)| \right\}, \\
      G_{j,R}^{\eta,\sigma} &= \left\{ x\in A_R: \delta_j|\nabla\psi_j(x)| \le \eta\sigma|\nabla U(x)| \right\}.
    \end{aligned}
  \]

  Since \(|\nabla U|\) is bounded away from zero on \(A_R\) and \((\nabla\psi_j)\) is bounded in
  \(L^p(\mathbb R^n)\), Chebyshev's inequality gives
  \begin{equation}
    \label{eq:measure-limit}
    |A_R\setminus G_{j,R}^{\eta,\sigma}| \to0.
  \end{equation}
  On \(G_{j,R}^{\eta,\sigma}\), we have \(|\nabla U|+\delta_j|\nabla\psi_j| \le(1+\eta\sigma)|\nabla
  U|\). Since \(p<2\),
  \[
    \begin{aligned}
      \int_{G_{j,R}^{\eta,\sigma}} |\nabla U|^{p-2}|\nabla\psi_j|^2\,dx &\le (1+\eta\sigma)^{2-p} \int_{\mathbb R^n} \bigl( |\nabla U|+\delta_j|\nabla\psi_j| \bigr)^{p-2} |\nabla\psi_j|^2\,dx \le C.
    \end{aligned}
  \]
  The weight \(|\nabla U|^{p-2}\) is bounded above and below by positive constants on \(A_R\). Hence
  \(\mathbf 1_{G_{j,R}^{\eta,\sigma}}\nabla\psi_j\) is bounded in \(L^2(A_R;\mathbb R^n)\). In fact,
  \begin{equation}
    \label{eq:weak-convergence-L2}
    \mathbf 1_{G_{j,R}^{\eta,\sigma}}\nabla\psi_j \rightharpoonup \nabla\psi \qquad \text{weakly in }L^2(A_R;\mathbb R^n).
  \end{equation}
  For \(\zeta\in C_c^\infty(A_R;\mathbb R^n)\), H\"older's inequality and \eqref{eq:measure-limit} give
  \[
    \begin{aligned}
      \left| \int_{A_R\setminus G_{j,R}^{\eta,\sigma}} \nabla\psi_j\cdot\zeta\,dx \right| &\le \|\nabla\psi_j\|_{L^p(\mathbb R^n)} \|\zeta\|_{L^\infty(A_R)} |A_R\setminus G_{j,R}^{\eta,\sigma}|^{1/p'} \to0.
    \end{aligned}
  \]
  Since \(\nabla\psi_j\rightharpoonup\nabla\psi\) in \(L^p(\mathbb R^n)\), every weak \(L^2\)-limit is
  \(\nabla\psi\). Thus \eqref{eq:weak-convergence-L2} holds, and weak lower semicontinuity yields
  \begin{equation}
    \label{eq:weighted-limit-bound-annulus}
    \begin{aligned}
      \int_{A_R} |\nabla U|^{p-2}|\nabla\psi|^2\,dx &\le \liminf_{j\to\infty} \int_{G_{j,R}^{\eta,\sigma}} |\nabla U|^{p-2}|\nabla\psi_j|^2\,dx \le C.
    \end{aligned}
  \end{equation}
  Thus \(\psi\in W_{\rm loc}^{1,2}(\mathbb R^n\setminus\{0\})\). The constant in
  \eqref{eq:weighted-limit-bound-annulus} is independent of \(R\). Monotone convergence as
  \(R\to\infty\) gives
  \begin{equation}
    \label{eq:psi-weight-norm}
    \int_{\mathbb R^n} |\nabla U|^{p-2}|\nabla\psi|^2\,dx <\infty.
  \end{equation}

  We approximate \(\psi\) by functions in \(C_{c,0}^1(\mathbb R^n)\) in the norm \eqref{eq:def-HU-low}.
  Recall that \(\psi\in\dot W^{1,p}(\mathbb R^n)\), and
  \[
    |\nabla U(x)|^{p-2} \asymp
    \begin{cases}
      |x|^{\frac{p-2}{p-1}}, & |x|\le1, \\[1mm]
      |x|^{\frac{(n-1)(2-p)}{p-1}}, & |x|\ge1.
    \end{cases}
  \]

  For \(0<\rho<1/2\), set \(c_\rho = \frac{1}{|B_{2\rho}\setminus B_\rho|} \int_{B_{2\rho}\setminus
  B_\rho}\psi\,dx\). Choose \(\eta_\rho\in C^\infty(\mathbb R^n)\) such that \(\eta_\rho=0\) on
  \(B_\rho\), \(\eta_\rho=1\) on \(\mathbb R^n\setminus B_{2\rho}\), and \(|\nabla\eta_\rho|\le
  C/\rho\). Set \(\psi_\rho=c_\rho+\eta_\rho(\psi-c_\rho)\). Since \(|\nabla
  U|^{p-2}\asymp\rho^{(p-2)/(p-1)}\) on \(B_{2\rho}\setminus B_\rho\), the scaled Poincar\'e inequality
  gives
  \[
    \frac1{\rho^2} \int_{B_{2\rho}\setminus B_\rho} |\nabla U|^{p-2}|\psi-c_\rho|^2\,dx \le C \int_{B_{2\rho}\setminus B_\rho} |\nabla U|^{p-2}|\nabla\psi|^2\,dx.
  \]
  Hence
  \[
    \begin{aligned}
      &\int_{\mathbb R^n} |\nabla U|^{p-2} |\nabla(\psi_\rho-\psi)|^2\,dx \\
      &\qquad\le C \int_{B_{2\rho}} |\nabla U|^{p-2}|\nabla\psi|^2\,dx + \frac{C}{\rho^2} \int_{B_{2\rho}\setminus B_\rho} |\nabla U|^{p-2}|\psi-c_\rho|^2\,dx \\
      &\qquad\le C \int_{B_{2\rho}} |\nabla U|^{p-2}|\nabla\psi|^2\,dx \to0
    \end{aligned}
  \]
  as \(\rho\downarrow0\).

  By \eqref{eq:psi-weight-norm} and the Cauchy--Schwarz inequality, \(\lim_{r\to\infty}\psi(r\theta)\)
  exists for almost every \(\theta\in\mathbb S^{n-1}\). The limit is zero, since \(\psi\in\dot
  W^{1,p}(\mathbb R^n)\subset L^{p^*}(\mathbb R^n)\). Applying Cauchy--Schwarz to the radial integral
  from \(r\) to infinity gives \(r^{n+\frac{(n-1)(2-p)}{p-1}-2}|\psi(r\theta)|^2\to0\) as
  \(r\to\infty\). The boundary term at infinity therefore vanishes. For \(R\ge1\), integration by parts
  in \(r\) and Cauchy--Schwarz give
  \[
    \begin{aligned}
      \int_{\mathbb R^n\setminus B_R}& |x|^{\frac{(n-1)(2-p)}{p-1}-2}|\psi|^2\,dx \\
      &= \int_{\mathbb S^{n-1}} \int_R^\infty r^{n+\frac{(n-1)(2-p)}{p-1}-3} |\psi(r\theta)|^2\,dr\,d\sigma(\theta) \\
      &\le \frac{4}{ \left( n+\frac{(n-1)(2-p)}{p-1}-2 \right)^2} \int_{\mathbb S^{n-1}} \int_R^\infty r^{n+\frac{(n-1)(2-p)}{p-1}-1} |\partial_r\psi(r\theta)|^2\,dr\,d\sigma(\theta) \\
      &\le C \int_{\mathbb R^n\setminus B_R} |\nabla U|^{p-2}|\nabla\psi|^2\,dx.
    \end{aligned}
  \]

  Choose \(\chi_R\in C_c^\infty(\mathbb R^n)\) such that \(\chi_R=1\) on \(B_R\), \(\chi_R=0\) on
  \(\mathbb R^n\setminus B_{2R}\), and \(|\nabla\chi_R|\le C/R\). Set \(\psi_{\rho,R}=\chi_R\psi_\rho\).
  Since \(0<\rho<1/2\) and \(R\ge1\), \(\psi_\rho=\psi\) on \(\mathbb R^n\setminus B_R\). Thus
  \[
    \begin{aligned}
      \int_{\mathbb R^n}& |\nabla U|^{p-2} |\nabla(\psi_{\rho,R}-\psi_\rho)|^2\,dx \\
      &\le C \int_{\mathbb R^n\setminus B_R} |\nabla U|^{p-2}|\nabla\psi|^2\,dx + \frac{C}{R^2} \int_{B_{2R}\setminus B_R} |\nabla U|^{p-2}|\psi|^2\,dx \\
      &\le C \int_{\mathbb R^n\setminus B_R} |\nabla U|^{p-2}|\nabla\psi|^2\,dx \to0
    \end{aligned}
  \]
  as \(R\to\infty\). Letting first \(R\to\infty\) and then \(\rho\downarrow0\), we obtain
  \(\int_{\mathbb R^n}|\nabla U|^{p-2} |\nabla(\psi_{\rho,R}-\psi)|^2\,dx\to0\).

  The function \(\psi_{\rho,R}\) is compactly supported and constant near the origin. Its gradient is
  supported in a compact annulus on which \(|\nabla U|^{p-2}\) is bounded above and below by positive
  constants. Mollification therefore approximates \(\psi_{\rho,R}\) in the weighted gradient norm by
  functions in \(C_{c,0}^{\infty}(\mathbb R^n)\subset C_{c,0}^{1}(\mathbb R^n)\). Thus \(\psi\in\mathcal
  H_U\) by \eqref{eq:def-HU-low}.

  For \(x\in G_{j,R}^{\eta,\sigma}\) and \(\xi\in\Gamma_\sigma(x)\),
  \begin{equation}
    \label{eq:small-directional-perturbation}
    |\delta_j\partial_\xi\psi_j(x)| \le \delta_j|\nabla\psi_j(x)| \le \eta\sigma|\nabla U(x)| \le \eta|\partial_\xi U(x)|.
  \end{equation}
  If \(|\partial_\xi U| < |\partial_\xi U+\delta_j\partial_\xi\psi_j|\), then \eqref{eq:def-w} and
  \eqref{eq:small-directional-perturbation} imply
  \[
    \begin{aligned}
      0 \le \frac{|w_{\xi,\delta_j}|^{p-2}} {|\partial_\xi U|^{p-2}} -1 &= \frac{ (p-1) \bigl( |\partial_\xi U+\delta_j\partial_\xi\psi_j| -
        |\partial_\xi U| \bigr) }{ (2-p)|\partial_\xi U+\delta_j\partial_\xi\psi_j| + (p-1)|\partial_\xi U| } \\
      &\le (p-1)\eta.
    \end{aligned}
  \]
  If \(|\partial_\xi U+\delta_j\partial_\xi\psi_j| \le|\partial_\xi U|\), then
  \(w_{\xi,\delta_j}=\partial_\xi U\). Hence
  \begin{equation}
    \label{eq:w-ratio-both-cases-low-p}
    |w_{\xi,\delta_j}|^{p-2} \le \bigl(1+(p-1)\eta\bigr) |\partial_\xi U|^{p-2}.
  \end{equation}

  Since
  \[
    \left| \frac{ |\partial_\xi U+\delta_j\partial_\xi\psi_j| - |\partial_\xi U| }{ \delta_j } \right| \le |\partial_\xi\psi_j|,
  \]
  \(p<2\) and \eqref{eq:w-ratio-both-cases-low-p} imply
  \begin{equation}
    \label{eq:B-density-lower-good-region}
    \begin{aligned}
      &\frac p2 |\partial_\xi U|^{p-2} |\partial_\xi\psi_j|^2 + \frac{p(p-2)}2 |w_{\xi,\delta_j}|^{p-2} \left( \frac{ |\partial_\xi U+\delta_j\partial_\xi\psi_j| - |\partial_\xi U| }{ \delta_j } \right)^2 \\
      &\qquad\ge \frac p2 \left( |\partial_\xi U|^{p-2} - (2-p)|w_{\xi,\delta_j}|^{p-2} \right) |\partial_\xi\psi_j|^2 \\
      &\qquad\ge \frac p2 (p-1) \bigl(1-(2-p)\eta\bigr) |\partial_\xi U|^{p-2} |\partial_\xi\psi_j|^2.
    \end{aligned}
  \end{equation}

  The combined integrand in \eqref{eq:def-B} is nonnegative by \eqref{eq:Bge0}. Integrating
  \eqref{eq:B-density-lower-good-region} over \(x\in G_{j,R}^{\eta,\sigma}\) and
  \(\xi\in\Gamma_\sigma(x)\), we obtain
  \begin{equation}
    \label{eq:B-restricted-lower-low-p}
    \begin{aligned}
      &\alpha_{n,p} \left\langle \mathfrak B_{\xi,\delta_j}(\psi_j) \right\rangle_{\mathbb S^{n-1}} \\
      &\qquad\ge \frac p2 (p-1) \bigl(1-(2-p)\eta\bigr) \int_{A_R} \alpha_{n,p} \left\langle \mathbf 1_{\Gamma_\sigma(x)} |\partial_\xi
        U|^{p-2} \left| \xi\cdot \mathbf 1_{G_{j,R}^{\eta,\sigma}} \nabla\psi_j \right|^2 \right\rangle_{\mathbb S^{n-1}} \,dx.
    \end{aligned}
  \end{equation}

  For fixed \(R\) and \(\sigma\), the quadratic form in \eqref{eq:B-restricted-lower-low-p} is
  nonnegative and has bounded measurable coefficients on \(A_R\). By weak lower semicontinuity and
  \eqref{eq:weak-convergence-L2},
  \[
    \begin{aligned}
      &\liminf_{j\to\infty} \alpha_{n,p} \left\langle \mathfrak B_{\xi,\delta_j}(\psi_j) \right\rangle_{\mathbb S^{n-1}} \\
      &\qquad\ge \frac p2 (p-1) \bigl(1-(2-p)\eta\bigr) \int_{A_R} \alpha_{n,p} \left\langle \mathbf 1_{\Gamma_\sigma(x)} |\partial_\xi U|^{p-2} |\partial_\xi\psi|^2 \right\rangle_{\mathbb S^{n-1}} \,dx.
    \end{aligned}
  \]

  Letting \(\eta\downarrow0\) and then \(\sigma\downarrow0\), we obtain by monotone convergence,
  \eqref{eq:angular-identities-low-p}, and \(\alpha_{n,p}=m_{n,p}^{-1}\)
  \begin{equation}
    \label{eq:B-liminf-annulus}
    \begin{aligned}
      &\liminf_{j\to\infty} \alpha_{n,p} \left\langle \mathfrak B_{\xi,\delta_j}(\psi_j) \right\rangle_{\mathbb S^{n-1}} \\
      &\qquad\ge \frac p2(p-1) \int_{A_R} \alpha_{n,p} \left\langle |\partial_\xi U|^{p-2} |\partial_\xi\psi|^2 \right\rangle_{\mathbb S^{n-1}} \,dx \\
      &\qquad= \frac p2 \int_{A_R} |\nabla U|^{p-2} \left( |\nabla\psi|^2 + (p-2) \frac{ (\nabla U\cdot\nabla\psi)^2 }{ |\nabla U|^2 } \right)\,dx.
    \end{aligned}
  \end{equation}
  Now let \(R\to\infty\). Monotone convergence gives \eqref{eq:B-lower-cts}.
\end{proof}

\subsection{\texorpdfstring{The range \(p^*\le2\)}{The range p*<=2}}
\begin{proof}[Proof of Proposition~\ref{prop:coercivity-low-p}]
  By \eqref{eq:principal-equivalence-gap-proof} and \eqref{eq:Lxi-HU-bound}, there is a constant
  \(C_*=C_*(n,p)>0\) such that
  \begin{equation}
    \label{eq:quadratic-terms-controlled-low}
    \begin{aligned}
      \int_{\mathbb R^n} |\nabla U|^{p-2} \left( |\nabla\phi|^2 + (p-2) \frac{(\nabla U\cdot\nabla\phi)^2}{|\nabla U|^2} \right)\,dx +
        \alpha_{n,p} \frac{\tau+1}{pA_0} \operatorname{Var}_{\xi} \bigl(L_\xi(\phi)\bigr) \le C_*\|\phi\|_{\mathcal H_U}^2.
    \end{aligned}
  \end{equation}

  Choose
  \begin{equation}
    \label{eq:coercivity-parameters-low}
    \kappa_0=\kappa_0(n,p)\in(0,1), \qquad C_*\kappa_0\le\frac{c_{\rm sg}}4, \qquad \eta_0=\frac{c_{\rm sg}}{4c_0},
  \end{equation}
  and fix \(0<\kappa\le\kappa_0\). Let \(c_\kappa>0\) be the constant in
  Lemma~\ref{lem:affine-nonlinear-lower-p<2}.

  If \eqref{eq:coercivity-low-p} fails, there are \(\varepsilon_j\downarrow0\) and \(\varphi_j\in\dot
  W^{1,p}(\mathbb R^n)\) with \(\|\nabla\varphi_j\|_{L^p(\mathbb R^n)}=1\),
  \[
    \int_{\mathbb R^n} U^{p^*-2}Z\varphi_j\,dx = 0 \qquad \text{for every }Z\in T_U\mathcal M_{\rm aff},
  \]
  and
  \begin{equation}
    \label{eq:failure-low}
    \begin{aligned}
      &\mathcal N_{{\rm aff}}^{\varepsilon_j,\kappa}(\varphi_j) + \frac{c_\kappa}{2} \alpha_{n,p}\varepsilon_j^{-2} \left\langle \mathfrak
        R_{\xi,\varepsilon_j}(\varphi_j) \right\rangle_{\mathbb S^{n-1}} < \frac p2 \bigl( (p^*-1)\Lambda+\eta_0 \bigr) \mathcal
        P_{\varepsilon_j}(\varphi_j).
    \end{aligned}
  \end{equation}

  Define
  \begin{equation}
    \label{eq:coercivity-normalization-low}
    \rho_j^2 = \int_{\mathbb R^n} \bigl( |\nabla U| + \varepsilon_j|\nabla\varphi_j| \bigr)^{p-2} |\varepsilon_j\nabla\varphi_j|^2\,dx, \qquad \psi_j = \frac{\varepsilon_j}{\rho_j}\varphi_j.
  \end{equation}
  Since \(p<2\),
  \begin{equation}
    \label{eq:rho-to-zero-low}
    \begin{aligned}
      \rho_j^2 &\le \int_{\mathbb R^n} |\varepsilon_j\nabla\varphi_j|^p\,dx = \varepsilon_j^p \|\nabla\varphi_j\|_{L^p(\mathbb R^n)}^p = \varepsilon_j^p \to0.
    \end{aligned}
  \end{equation}
  Moreover,
  \begin{equation}
    \label{eq:two-scale-normalization-low}
    \int_{\mathbb R^n} \bigl( |\nabla U| + \rho_j|\nabla\psi_j| \bigr)^{p-2} |\nabla\psi_j|^2\,dx = 1.
  \end{equation}
  Since \(\psi_j=(\varepsilon_j/\rho_j)\varphi_j\),
  \begin{equation}
    \label{eq:orth-normalization}
    \int_{\mathbb R^n} U^{p^*-2}Z\psi_j\,dx = 0 \qquad \text{for every }Z\in T_U\mathcal M_{\rm aff}.
  \end{equation}

  By \eqref{eq:failure-low} and \eqref{eq:coercivity-normalization-low},
  \begin{equation}
    \label{eq:failure-low-normalized}
    \begin{aligned}
      &\mathcal N_{{\rm aff}}^{\rho_j,\kappa}(\psi_j) + \frac{c_\kappa}{2} \alpha_{n,p}\rho_j^{-2} \left\langle \mathfrak
        R_{\xi,\rho_j}(\psi_j) \right\rangle_{\mathbb S^{n-1}} < \frac p2 \bigl( (p^*-1)\Lambda+\eta_0 \bigr) \mathcal P_{\rho_j}(\psi_j).
    \end{aligned}
  \end{equation}

  By \eqref{eq:rho-to-zero-low}, \eqref{eq:two-scale-normalization-low}, and \cite[Lemma~3.4]{FZ22},
  after passing to a subsequence, there is \(\psi\in\dot W^{1,p}(\mathbb R^n) \cap L^2(\mathbb
  R^n;U^{p^*-2}dx)\) such that
  \begin{equation}
    \label{eq:weak-convergence-low}
    \psi_j \rightharpoonup \psi \qquad \text{weakly in }\dot W^{1,p}(\mathbb R^n),
  \end{equation}
  and
  \begin{equation}
    \label{eq:potential-convergence-low}
    \mathcal P_{\rho_j}(\psi_j) \to \int_{\mathbb R^n} U^{p^*-2}\psi^2\,dx.
  \end{equation}

  For \(Z\in T_U\mathcal M_{\rm aff}\), the bound \(|Z|\le C_ZU\) implies \(U^{p^*-2}Z\in
  L^{(p^*)'}(\mathbb R^n)\). By Sobolev embedding, \eqref{eq:weak-convergence-low}, and
  \eqref{eq:orth-normalization},
  \begin{equation}
    \label{eq:limit-orthogonality-low}
    \psi \perp T_U\mathcal M_{\rm aff}.
  \end{equation}

  The map \(\xi\mapsto L_\xi\) is continuous from \(\mathbb S^{n-1}\) into \((\dot W^{1,p})^*\). Its
  image is compact in the dual norm. Thus \eqref{eq:weak-convergence-low} gives \(\sup_{\xi\in\mathbb
  S^{n-1}}|L_\xi(\psi_j)-L_\xi(\psi)|\to0\), and hence
  \begin{equation}
    \label{eq:variance-convergence-low}
    \operatorname{Var}_{\xi} \bigl(L_\xi(\psi_j)\bigr) \to \operatorname{Var}_{\xi} \bigl(L_\xi(\psi)\bigr).
  \end{equation}

  By Lemma~\ref{lem:averaged-affine-lsc-low-p}, \(\psi\in\mathcal H_U\). Using
  \eqref{eq:quadratic-terms-controlled-low} and \eqref{eq:coercivity-parameters-low}, we deduce from
  \eqref{eq:B-lower-cts} and \eqref{eq:variance-convergence-low} that
  \begin{equation}
    \label{eq:limit-estimate-low}
    \begin{aligned}
      \liminf_{j\to\infty}& \frac2p \mathcal N_{{\rm aff}}^{\rho_j,\kappa}(\psi_j) \\
      &\ge (1-\kappa) \int_{\mathbb R^n} |\nabla U|^{p-2} \left( |\nabla\psi|^2 + (p-2) \frac{(\nabla U\cdot\nabla\psi)^2}{|\nabla U|^2} \right)\,dx \\
      &\qquad - (1+\kappa) \alpha_{n,p}\frac{\tau+1}{pA_0} \operatorname{Var}_{\xi}\bigl(L_\xi(\psi)\bigr) \\
      &\ge \int_{\mathbb R^n} |\nabla U|^{p-2} \left( |\nabla\psi|^2 + (p-2) \frac{(\nabla U\cdot\nabla\psi)^2}{|\nabla U|^2} \right)\,dx \\
      &\qquad - \alpha_{n,p}\frac{\tau+1}{pA_0} \operatorname{Var}_{\xi}\bigl(L_\xi(\psi)\bigr) - \frac{c_{\rm sg}}4\|\psi\|_{\mathcal H_U}^2 \\
      &= Q_{\rm aff,p}(\psi) + (p^*-1)\Lambda \int_{\mathbb R^n}U^{p^*-2}\psi^2\,dx - \frac{c_{\rm sg}}4\|\psi\|_{\mathcal H_U}^2 \\
      &\ge \left( (p^*-1)\Lambda+\frac{3c_{\rm sg}}{4c_0} \right) \int_{\mathbb R^n}U^{p^*-2}\psi^2\,dx.
    \end{aligned}
  \end{equation}
  Here we used Proposition~\ref{prop:affine-spectral-gap} and \eqref{eq:weighted-Poincare-used-low}.

  By \eqref{eq:failure-low-normalized} and \eqref{eq:potential-convergence-low}, we obtain
  \begin{equation}
    \label{eq:upper-limit-N}
    \limsup_{j\to\infty} \frac2p \mathcal N_{{\rm aff}}^{\rho_j,\kappa}(\psi_j) \le \bigl( (p^*-1)\Lambda+\eta_0 \bigr) \int_{\mathbb R^n} U^{p^*-2}\psi^2\,dx.
  \end{equation}
  Then \eqref{eq:coercivity-parameters-low}, \eqref{eq:limit-estimate-low}, and \eqref{eq:upper-limit-N}
  imply \(\psi=0\).

  By \eqref{eq:potential-convergence-low} and \eqref{eq:variance-convergence-low} give \(\mathcal
  P_{\rho_j}(\psi_j)\to0\) and \(\operatorname{Var}_{\xi}(L_\xi(\psi_j))\to0\). By \eqref{eq:def-N} and
  \eqref{eq:failure-low-normalized},
  \begin{equation}
    \label{eq:B-R-vanish-low}
    \begin{aligned}
      &\alpha_{n,p}(1-\kappa) \left\langle \mathfrak B_{\xi,\rho_j}(\psi_j) \right\rangle_{\mathbb S^{n-1}} + \frac{c_\kappa}{2}
        \alpha_{n,p}\rho_j^{-2} \left\langle \mathfrak R_{\xi,\rho_j}(\psi_j) \right\rangle_{\mathbb S^{n-1}} \\
      &\qquad< \frac p2 \bigl( (p^*-1)\Lambda+\eta_0 \bigr) \mathcal P_{\rho_j}(\psi_j) + \alpha_{n,p}(1+\kappa) \frac{\tau+1}{2A_0} \operatorname{Var}_{\xi} \bigl(L_\xi(\psi_j)\bigr) \to0.
    \end{aligned}
  \end{equation}
  It follows from \eqref{eq:B-R-vanish-low} that
  \begin{equation}
    \label{eq:B-R-combined-vanish-low}
    \left\langle \mathfrak B_{\xi,\rho_j}(\psi_j) \right\rangle_{\mathbb S^{n-1}} + \rho_j^{-2} \left\langle \mathfrak R_{\xi,\rho_j}(\psi_j) \right\rangle_{\mathbb S^{n-1}} \to0.
  \end{equation}
  By \eqref{eq:averaged-R-lower}, \eqref{eq:two-scale-normalization-low}, and
  \eqref{eq:B-R-combined-vanish-low},
  \[
    \begin{aligned}
      1 &= \int_{\mathbb R^n} \bigl( |\nabla U| + \rho_j|\nabla\psi_j| \bigr)^{p-2} |\nabla\psi_j|^2\,dx \\
      &\le C \left[ \left\langle \mathfrak B_{\xi,\rho_j}(\psi_j) \right\rangle_{\mathbb S^{n-1}} + \rho_j^{-2} \left\langle \mathfrak R_{\xi,\rho_j}(\psi_j) \right\rangle_{\mathbb S^{n-1}} \right] \rightarrow0.
    \end{aligned}
  \]
  This contradiction proves the proposition.
\end{proof}

\subsection{\texorpdfstring{The range \(p^*>2\)}{The range p*>2}}

\begin{proof}[Proof of Proposition~\ref{prop:coercivity-high-p}]
  Choose \(\kappa_0\) and \(\eta_0\) by \eqref{eq:coercivity-parameters-low}. Fix
  \(0<\kappa\le\kappa_0\), and let \(c_\kappa\) be the constant in
  Lemma~\ref{lem:affine-nonlinear-lower-p<2}.

  If \eqref{eq:coercivity-high-p} fails, there are \(\varepsilon_j\downarrow0\) and \(\varphi_j\in\dot
  W^{1,p}(\mathbb R^n)\) with \(\|\nabla\varphi_j\|_{L^p(\mathbb R^n)}=1\), \(\varphi_j\perp T_U\mathcal
  M_{\rm aff}\), and
  \begin{equation}
    \label{eq:failure-high-p}
    \begin{aligned}
      \mathcal N_{{\rm aff}}^{\varepsilon_j,\kappa}(\varphi_j) + \frac{c_\kappa}{2}\alpha_{n,p}\varepsilon_j^{-2} \left\langle \mathfrak
        R_{\xi,\varepsilon_j}(\varphi_j) \right\rangle_{\mathbb S^{n-1}} < \frac p2 \bigl((p^*-1)\Lambda+\eta_0\bigr) \int_{\mathbb R^n}
        U^{p^*-2}\varphi_j^2\,dx.
    \end{aligned}
  \end{equation}
  Define \(\rho_j\) and \(\psi_j\) by \eqref{eq:coercivity-normalization-low}. Then
  \(\rho_j^2\le\varepsilon_j^p\to0\) by \eqref{eq:rho-to-zero-low}, and
  \begin{equation}
    \label{eq:high-p-normalization}
    \int_{\mathbb R^n} \bigl( |\nabla U|+\rho_j|\nabla\psi_j| \bigr)^{p-2} |\nabla\psi_j|^2\,dx = 1,
  \end{equation}
  and \(\psi_j\perp T_U\mathcal M_{\rm aff}\).

  By H\"older's inequality, \eqref{eq:high-p-normalization}, and Young's inequality yield
  \begin{align*}
    \int_{\mathbb R^n}|\nabla\psi_j|^p\,dx &\le \left( \int_{\mathbb R^n} \bigl( |\nabla U|+\rho_j|\nabla\psi_j| \bigr)^{p-2} |\nabla\psi_j|^2\,dx \right)^{p/2} \\
    &\qquad\cdot \left( \int_{\mathbb R^n} \bigl( |\nabla U|+\rho_j|\nabla\psi_j| \bigr)^p\,dx \right)^{(2-p)/2} \\
    &\le C \left( 1+ \rho_j^p \int_{\mathbb R^n} |\nabla\psi_j|^p\,dx \right)^{(2-p)/2} \\
    &\le C + C\rho_j^{\frac{p(2-p)}2} \left( \int_{\mathbb R^n} |\nabla\psi_j|^p\,dx \right)^{\frac{2-p}2} \\
    & \le C+ \frac12 \int_{\mathbb R^n} |\nabla\psi_j|^p\,dx + C\rho_j^{2-p}.
  \end{align*}
  It follows that \(\sup_j\|\nabla\psi_j\|_{L^p(\mathbb R^n)}<\infty\). After passing to a subsequence,
  \begin{equation}
    \label{eq:weak-convergence-high}
    \psi_j\rightharpoonup\psi \qquad \text{weakly in }\dot W^{1,p}(\mathbb R^n)
  \end{equation}
  for some \(\psi\in\dot W^{1,p}(\mathbb R^n)\). Lemma~\ref{lem:compact-embedding-high-p} gives
  \(\psi_j\to\psi\) strongly in \(L^2(\mathbb R^n;U^{p^*-2}dx)\). Hence
  \begin{equation}
    \label{eq:high-p-potential-limit}
    \int_{\mathbb R^n} U^{p^*-2}\psi_j^2\,dx \to \int_{\mathbb R^n} U^{p^*-2}\psi^2\,dx, \qquad \psi\perp T_U\mathcal M_{\rm aff}.
  \end{equation}

  By \eqref{eq:coercivity-normalization-low} and \eqref{eq:failure-high-p},
  \begin{equation}
    \label{eq:failure-high-p-normalized}
    \begin{aligned}
      \mathcal N_{{\rm aff}}^{\rho_j,\kappa}(\psi_j) + \frac{c_\kappa}{2}\alpha_{n,p}\rho_j^{-2} \left\langle \mathfrak
        R_{\xi,\rho_j}(\psi_j) \right\rangle_{\mathbb S^{n-1}} < \frac p2 \bigl((p^*-1)\Lambda+\eta_0\bigr) \int_{\mathbb R^n}
        U^{p^*-2}\psi_j^2\,dx.
    \end{aligned}
  \end{equation}

  By \eqref{eq:weak-convergence-high} and the compactness of \(\{L_\xi:\xi\in\mathbb S^{n-1}\}\) in
  \((\dot W^{1,p})^*\),
  \begin{equation}
    \label{eq:variance-convergence-high}
    \operatorname{Var}_{\xi} \bigl(L_\xi(\psi_j)\bigr) \to \operatorname{Var}_{\xi} \bigl(L_\xi(\psi)\bigr).
  \end{equation}
  By Lemma~\ref{lem:averaged-affine-lsc-low-p}, \(\psi\in\mathcal H_U\). Using
  \eqref{eq:variance-convergence-high} and \eqref{eq:high-p-potential-limit} in
  \eqref{eq:limit-estimate-low}, we obtain
  \begin{equation}
    \label{eq:limit-estimate-higher}
    \liminf_{j\to\infty} \frac2p \mathcal N_{{\rm aff}}^{\rho_j,\kappa}(\psi_j) \ge \left( (p^*-1)\Lambda + \frac{3c_{\rm sg}}{4c_0} \right) \int_{\mathbb R^n}U^{p^*-2}\psi^2\,dx.
  \end{equation}

  By \eqref{eq:failure-high-p-normalized} and \eqref{eq:high-p-potential-limit}, we have
  \begin{equation}
    \label{eq:upper-estimate-high}
    \limsup_{j\to\infty} \frac2p \mathcal N_{{\rm aff}}^{\rho_j,\kappa}(\psi_j) \le \bigl((p^*-1)\Lambda+\eta_0\bigr) \int_{\mathbb R^n} U^{p^*-2}\psi^2\,dx.
  \end{equation}
  Equations~\eqref{eq:coercivity-parameters-low}, \eqref{eq:limit-estimate-higher}, and
  \eqref{eq:upper-estimate-high} imply \(\psi=0\).

  Equations~\eqref{eq:high-p-potential-limit} and \eqref{eq:variance-convergence-high} give
  \[
    \int_{\mathbb R^n} U^{p^*-2}\psi_j^2\,dx \to0, \qquad \operatorname{Var}_{\xi} \bigl(L_\xi(\psi_j)\bigr) \to0.
  \]
  By \eqref{eq:def-N} and \eqref{eq:failure-high-p-normalized},
  \begin{equation}
    \label{eq:BR-vanish-high}
    \left\langle \mathfrak B_{\xi,\rho_j}(\psi_j) \right\rangle_{\mathbb S^{n-1}} + \rho_j^{-2} \left\langle \mathfrak R_{\xi,\rho_j}(\psi_j) \right\rangle_{\mathbb S^{n-1}} \to0.
  \end{equation}

  Equations~\eqref{eq:averaged-R-lower} and \eqref{eq:BR-vanish-high} contradict
  \eqref{eq:high-p-normalization}.
\end{proof}

\section{Qualitative compactness and proof of Theorem~\ref{thm:intro-main-affine-p<2}}
\label{sec:proof-main-theorem}

\subsection{Qualitative compactness}

\begin{proposition}
  \label{prop:qualitative-affine-compactness-low-p}
  Let \(1<p<n\), and let \(u_k\in\dot W^{1,p}(\mathbb R^n)\) satisfy \(\|u_k\|_{L^{p^*}(\mathbb
  R^n)}=1\) and \(\delta_{\rm aff}(u_k)\to0\). Then there exist \(c_k\ne0\), \(A_k\in SL(n)\),
  \(\lambda_k>0\), and \(x_k\in\mathbb R^n\) such that
  \[
    c_k^{-1}T_{\lambda_kA_k,x_k}u_k \to U \qquad \text{strongly in }\dot W^{1,p}(\mathbb R^n).
  \]
\end{proposition}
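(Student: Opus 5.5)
The idea is to reduce to the classical qualitative compactness for the Sobolev inequality, after an $SL(n)$ normalization that makes the affine energy comparable to the gradient norm. Since $\|u_k\|_{L^{p^*}}=1$ and $\delta_{\rm aff}(u_k)\to0$, we have $\mathcal E_{\rm aff,p}(u_k)\to S_{n,p}$. The first step is to pick $A_k\in SL(n)$ so that the transformed function $v_k=T_{A_k,0}u_k$ is in ``isotropic position''. Concretely, I would minimize $B\mapsto\|\nabla(u_k\circ B)\|_{L^p}$ over $B\in SL(n)$, or equivalently normalize the $L^p$ projection body / $L^p$ John ellipsoid associated with $u_k$.

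The key point is a reverse inequality: for such a minimizer,
\[
  \|\nabla v_k\|_{L^p}\le C_{n,p}\,\mathcal E_{\rm aff,p}(v_k).
\]
The minimum exists because $\xi\mapsto A_\xi(u)^{1/p}$ is a norm on $\mathbb R^n$; it is positive definite since $u\in L^{p^*}$ is not constant along any direction. The affine energy is the volume of the unit ball of this norm, raised to the power $-1/n$. The gradient norm is comparable to the $L^p$-average of this norm over the sphere. Over $SL(n)$, the minimal average of a norm is bounded by a dimensional multiple of the volume term, by a John-position argument. So $\|\nabla v_k\|_{L^p}$ is bounded, while $\|v_k\|_{L^{p^*}}=1$ and $\mathcal E_{\rm aff,p}(v_k)$ is unchanged.

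Next, apply concentration-compactness (Lions) to $v_k$ in $\dot W^{1,p}$. After choosing $\lambda_k,x_k$, so that $w_k=T_{\lambda_k,x_k}v_k$ satisfies $\sup_z\int_{B_1(z)}|w_k|^{p^*}=1/2$, pass to a weak limit $w$. Dichotomy and vanishing must be excluded; the standard argument uses the strict subadditivity $a^{p/p^*}+b^{p/p^*}>(a+b)^{p/p^*}$. The affine energy is not additive, however, so I would apply the Brezis--Lieb splitting to the directional energies. For every $\xi$,
\[
  A_\xi(w_k)=A_\xi(w)+A_\xi(w_k-w)+o(1),
\]
which requires a.e. convergence of $\nabla w_k$. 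That in turn comes from the almost-minimizing property via the standard Ekeland-plus-PDE argument, or from Lions' lemma applied to the full gradient measures. Then, by the superadditivity of $a\mapsto\Phi(a)$ (reverse Minkowski for the $-n/p$ mean, since $\Phi$ is concave and $1$-homogeneous), together with the affine Sobolev inequality applied to $w$ and $w_k-w$, we get
\[
  S_{n,p}^p\ge S_{n,p}^p\bigl(\|w\|_{p^*}^p+\|w_k-w\|_{p^*}^p\bigr)+o(1).
\]
Combined with $\|w\|_{p^*}^{p^*}+\|w_k-w\|_{p^*}^{p^*}\to1$, this forces $\|w\|_{p^*}=1$. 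So $w$ is an affine extremal and $w_k\to w$ strongly in $L^{p^*}$.

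Since $w\in\mathcal M_{\rm aff}$, adjust by a further $SL(n)$ element, dilation, translation and a constant $c_k$ (a fixed normalization, possibly a sign) so that the limit is $U$. Strong gradient convergence then follows. We have $\mathcal E_{\rm aff,p}(w_k)\to\mathcal E_{\rm aff,p}(U)=\|\nabla U\|_p$, and $A_\xi(w_k)\to A_\xi(U)$ uniformly in $\xi$, using the Brezis--Lieb splitting and the fact that $A_\xi(w_k-w)\to0$ for all $\xi$: the remainder has vanishing affine energy and bounded norm, and the norm structure forces each $A_\xi(w_k-w)\to0$. Averaging over $\xi$ gives $\|\nabla w_k\|_p\to\|\nabla U\|_p$, and uniform convexity of $L^p$ yields strong convergence.

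The main obstacle is the loss of the classical structure. One part is the reverse comparison $\|\nabla v\|_p\lesssim\mathcal E_{\rm aff,p}(v)$ in a suitable position. The other is controlling all directions of the remainder using only the $-n/p$ harmonic-type mean, since small affine energy of a remainder does not by itself force small gradient. I would handle both through the norm/convex-body interpretation $\xi\mapsto A_\xi^{1/p}$ and the John-ellipsoid comparison between the volume and the maximal width.
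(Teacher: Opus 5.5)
Your $SL(n)$ normalization (John position of the polar $L^p$ projection body) is sound and plays the role of the affine normalization \cite[Lemma~4.1]{FLZ26B}. Your concentration--compactness step plays the role of the profile decomposition \cite[Lemma~3.2]{FLY26}. The genuine gap is in your last step. From superadditivity you get $\mathcal E_{\rm aff,p}(w_k-w)\to0$. You then claim that vanishing affine energy plus a bounded gradient forces $A_\xi(w_k-w)\to0$ for every $\xi$. This is false. Take $0\ne g\in C_c^\infty(\mathbb R^n)$, $\epsilon_k\downarrow0$, and $r_k(x)=\epsilon_k^{1/p}g(\epsilon_kx_1,x_2,\dots,x_n)$. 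Write $\xi'$ and $\nabla'$ for the last $n-1$ components. A change of variables gives $A_\xi(r_k)=\int_{\mathbb R^n}|\epsilon_k\xi_1\partial_1g+\xi'\cdot\nabla'g|^p\,dy$. Hence $A_{e_1}(r_k)\to0$, while $A_{e_2}(r_k)=A_{e_2}(g)>0$ for every $k$. The polar body of $r_k$ is stretched by $\epsilon_k^{-1}$ in one direction only, so $\mathcal E_{\rm aff,p}(r_k)=\epsilon_k^{1/n}\mathcal E_{\rm aff,p}(g)\to0$. Moreover, $\|\nabla r_k\|_{L^p}\le\|\nabla g\|_{L^p}$, $r_k\rightharpoonup0$ in $\dot W^{1,p}$, and $\|r_k\|_{L^{p^*}}^{p^*}=\epsilon_k^{p^*/p-1}\|g\|_{L^{p^*}}^{p^*}\to0$. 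So $r_k$ has every property you established for the remainder. Replacing $\Phi(A(w_k))$ by $\Phi(A(w))+\Phi(A(w_k-w))$ discards exactly the information that is needed.

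The missing idea is to keep the base term. After normalizing $w=U$, one has $A_\xi(U)\equiv A_0>0$. Suppose $A_\xi(w_k)\ge A_0-\eta_k+b_k(\xi)$ with $\eta_k\to0$ and $0\le b_k\le C$. Then $\Phi(A(w_k))\to A_0$ forces $\langle b_k\rangle_{\mathbb S^{n-1}}\to0$, because $t\mapsto t^{-n/p}$ has derivative bounded away from zero on compact subsets of $(0,\infty)$. The paper takes $b_k=c_pR_k$ from \cite[Lemma~2.1(i)]{FZ22}. It controls $L_\xi(\widetilde\rho_k)$ uniformly in $\xi$ by compactness of $\{L_\xi\}$ in $(\dot W^{1,p})^*$. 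It then converts $\langle R_k\rangle\to0$ into $\|\nabla\widetilde\rho_k\|_{L^p}\to0$ by H\"older's inequality. Alternatively, the norms $A_\xi(w_k)^{1/p}$ are equi-Lipschitz in $\xi$, so a subsequence converges uniformly to some $a(\xi)^{1/p}$ with $a\ge A_0$ by weak lower semicontinuity. Then $\Phi(a)=A_0$ forces $a\equiv A_0$, and the Radon--Riesz property of $L^p$ gives $\nabla w_k\to\nabla U$. Neither route needs a.e.\ convergence of $\nabla w_k$.

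The same holds in your compactness step. The measure inequality $\mu_\xi\ge|\partial_\xi w|^p\,dx+\sum_j\mu_\xi(\{x_j\})\delta_{x_j}$ follows from weak lower semicontinuity and mutual singularity. So the Brezis--Lieb identity for $A_\xi$, and the Ekeland/PDE detour to justify it, are unnecessary. Lions' lemma applied to the gradient measures would not give a.e.\ convergence of $\nabla w_k$ in any case. Finally, splitting into $w$ and $w_k-w$ only yields $\|w\|_{L^{p^*}}\in\{0,1\}$. To exclude $w=0$ you must rule out atoms: apply the affine inequality to cutoffs of $w_k$ near each atom and use superadditivity of $\Phi$ over all pieces, together with your normalization of the concentration function.
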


\begin{proof}
  For \(2\le p<n\), the result follows from \cite[Theorem~3.1]{FLY26}. Assume \(1<p<2\). We use the
  affine normalization in \cite[Lemma~4.1]{FLZ26B} and apply the Sobolev profile decomposition
  \cite[Lemma~3.2]{FLY26}. The directional estimate and the \(L^{p^*}(\mathbb R^n)\)-norm identity in
  \cite[equations~(3.2) and~(3.9)]{FLY26}, together with the sharp affine inequality, show that exactly
  one profile is nonzero and that it is an extremal. This part of the proof of \cite[Theorem~3.1]{FLY26}
  is valid for every \(1<p<n\).

  Thus, after passing to a subsequence, we can choose \(c_k\ne0\), \(A_k\in SL(n)\), \(\lambda_k>0\),
  and \(x_k\in\mathbb R^n\) such that
  \[
    c_k^{-1}T_{\lambda_kA_k,x_k}u_k =U+\widetilde\rho_k,
  \]
  where
  \[
    \widetilde\rho_k\rightharpoonup0 \quad\text{in }\dot W^{1,p}(\mathbb R^n), \qquad \|\widetilde\rho_k\|_{L^{p^*}(\mathbb R^n)}\to0,
      \qquad \mathcal E_{\rm aff,p}(U+\widetilde\rho_k)^p \to\mathcal E_{\rm aff,p}(U)^p.
  \]
  It remains to show that \(\|\nabla\widetilde\rho_k\|_{L^p(\mathbb R^n)}\to0\). For \(\xi\in\mathbb
  S^{n-1}\), set
  \[
    R_k(\xi) = \int_{\mathbb R^n} \min \left\{ |\partial_\xi\widetilde\rho_k|^p,\, |\partial_\xi U|^{p-2} |\partial_\xi\widetilde\rho_k|^2 \right\}\,dx.
  \]
  By \cite[Lemma~2.1(i)]{FZ22},
  \[
    |a+b|^p \ge |a|^p + p|a|^{p-2}ab + c_p \min \left\{ |b|^p,\, |a|^{p-2}|b|^2 \right\}, \qquad 1<p<2,
  \]
  we obtain
  \[
    A_\xi(U+\widetilde\rho_k) \ge A_0 + L_\xi(\widetilde\rho_k) + c_pR_k(\xi).
  \]
  As in the proof of \cite[Proposition~4.1]{FLZ26B}, the compactness of \(\{L_\xi:\xi\in\mathbb
  S^{n-1}\}\) in \((\dot W^{1,p}(\mathbb R^n))^*\) and the weak convergence
  \(\widetilde\rho_k\rightharpoonup0\) imply \(\eta_k = \sup_{\xi\in\mathbb S^{n-1}}
  |L_\xi(\widetilde\rho_k)| \to0\). Hence
  \begin{equation}
    \label{eq:directional-two-scale-global-lower-low-p}
    A_\xi(U+\widetilde\rho_k) \ge A_0-\eta_k+c_pR_k(\xi).
  \end{equation}

  We claim that
  \begin{equation}
    \label{eq:averaged-two-scale-remainder-global}
    \left\langle R_k \right\rangle_{\mathbb S^{n-1}} \to0.
  \end{equation}
  Indeed, since \((\widetilde\rho_k)\) is bounded in \(\dot W^{1,p}(\mathbb R^n)\), \(0\le R_k(\xi) \le
  A_\xi(\widetilde\rho_k) \le C\) uniformly in \(k\) and \(\xi\). By
  \eqref{eq:directional-two-scale-global-lower-low-p},
  \[
    \begin{aligned}
      \mathcal E_{\rm aff,p}(U+\widetilde\rho_k)^p &= \alpha_{n,p} \Phi\bigl(A_\xi(U+\widetilde\rho_k)\bigr) \ge \alpha_{n,p} \Phi\bigl(A_0-\eta_k+c_pR_k(\xi)\bigr).
    \end{aligned}
  \]
  The mean value theorem therefore gives
  \[
    \left\langle (A_0-\eta_k+c_pR_k)^{-n/p} \right\rangle_{\mathbb S^{n-1}} \le (A_0-\eta_k)^{-n/p} -c\left\langle R_k\right\rangle_{\mathbb S^{n-1}}.
  \]
  If \eqref{eq:averaged-two-scale-remainder-global} failed, then, after passing to a subsequence, there
  would exist \(\delta>0\) such that \(\langle R_k\rangle_{\mathbb S^{n-1}}\ge\delta\). Consequently,
  \[
    \begin{aligned}
      \liminf_{k\to\infty} \mathcal E_{\rm aff,p}(U+\widetilde\rho_k)^p &\ge \alpha_{n,p} \left( A_0^{-n/p}-c\delta \right)^{-p/n} > \alpha_{n,p}A_0 = \mathcal E_{\rm aff,p}(U)^p,
    \end{aligned}
  \]
  contrary to \(\mathcal E_{\rm aff,p}(U+\widetilde\rho_k)^p \to\mathcal E_{\rm aff,p}(U)^p\). This
  proves \eqref{eq:averaged-two-scale-remainder-global}.

  Since \((\widetilde\rho_k)\) is bounded in \(\dot W^{1,p}(\mathbb R^n)\), by
  \eqref{eq:averaged-R-lower}, \eqref{eq:averaged-two-scale-remainder-global} and H\"older's inequality
  \[
    \begin{aligned}
      \int_{\mathbb R^n} |\nabla\widetilde\rho_k|^p\,dx &\le \left( \int_{\mathbb R^n} \bigl( |\nabla U|+|\nabla\widetilde\rho_k|
        \bigr)^{p-2} |\nabla\widetilde\rho_k|^2\,dx \right)^{p/2} \left( \int_{\mathbb R^n} \bigl( |\nabla U|+|\nabla\widetilde\rho_k|
        \bigr)^p\,dx \right)^{(2-p)/2} \\
      &\le C \left\langle R_k \right\rangle_{\mathbb S^{n-1}}^{p/2} \rightarrow0.
    \end{aligned}
  \]
  This completes the proof.
\end{proof}

\subsection{Choice of an affine extremal}

Orthogonality is understood as in \eqref{eq:orthogonality-convention}.
\begin{lemma}
  \label{lem:affine-modulation-low-p}
  Let \(1<p<2\). There exist \(\eta_0>0\), \(R_0>0\), and a modulus \(\omega:[0,\eta_0]\to[0,\infty)\),
  with \(\omega(t)\to0\) as \(t\downarrow0\), such that the following holds. If \(w\in\dot
  W^{1,p}(\mathbb R^n)\) and \(\|\nabla(w-U)\|_{L^p(\mathbb R^n)}\le\eta_0\), then there exists
  \(V\in\mathcal M_{\rm aff}\) satisfying
  \[
    \|\nabla(V-U)\|_{L^p(\mathbb R^n)}\le R_0, \qquad \|\nabla(w-V)\|_{L^p(\mathbb R^n)} \le \omega\bigl(\|\nabla(w-U)\|_{L^p(\mathbb R^n)}\bigr),
  \]
  and
  \begin{equation}
    \label{eq:modulation-orthogonality-low-p}
    \int_{\mathbb R^n}|V|^{p^*-2}Z(w-V)\,dx=0 \qquad \text{for every }Z\in T_V\mathcal M_{\rm aff}.
  \end{equation}
\end{lemma}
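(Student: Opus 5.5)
The plan is to obtain \(V\) by minimizing a weighted \(L^2\)-type distance over a compact piece of the extremal manifold. Then \eqref{eq:modulation-orthogonality-low-p} is the first-order optimality condition, and continuity/compactness gives the modulus \(\omega\).

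\textbf{Parametrization and choice of functional.} Parametrize the nonzero part of \(\mathcal M_{\rm aff}\) near \(U\) by \(\theta=(c,\lambda,A,x_0)\) in a compact neighborhood \(K\) of \((1,1,I,0)\) in \(\mathbb R\times(0,\infty)\times SL(n)\times\mathbb R^n\). Write
\[
  V_\theta=c\,\lambda^{\frac{n-p}{p}}U(\lambda A(\cdot-x_0)).
\]
The map \(\theta\mapsto V_\theta\) is continuous into \(\dot W^{1,p}\), so \(R_0:=\sup_K\|\nabla(V_\theta-U)\|_{L^p}\) is finite. For \(w\) near \(U\) consider
\[
  F_w(\theta)=\int_{\mathbb R^n}|V_\theta|^{p^*-2}(w-V_\theta)^2\,dx .
\]
The weight is chosen so that the Euler--Lagrange equations match \eqref{eq:modulation-orthogonality-low-p}. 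This integral is finite even when \(p^*<2\): since \(|V_\theta|\le C U_\theta\), where \(U_\theta\) is a Talenti bubble, we can use H\"older with exponents \(p^*/(p^*-2)\) and \(p^*/2\) when \(p^*>2\). When \(p^*\le2\) I would instead use the pairing form \(G_w(\theta)=\bigl(\int|V_\theta|^{p^*-2}Z(w-V_\theta)\bigr)_{Z}\) and solve \(G_w(\theta)=0\) by the implicit function theorem. I would adopt this second route in all cases, since it avoids integrability issues.

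\textbf{Implicit function theorem.} The map \((w,\theta)\mapsto G_w(\theta)\in\mathbb R^{\dim T_U\mathcal M_{\rm aff}}\) is \(C^1\) in \(\theta\) and continuous (indeed affine, hence Lipschitz) in \(w\in\dot W^{1,p}\). The reason is that \(|V_\theta|^{p^*-2}Z\in L^{(p^*)'}\) uniformly on \(K\), since \(|Z|\le C U_\theta\), and \(\dot W^{1,p}\hookrightarrow L^{p^*}\). We have \(G_U(\theta_0)=0\) at \(\theta_0=(1,1,I,0)\). Modulo the stabilizer \(SO(n)\) of \(U\), \(D_\theta G_U(\theta_0)\) is minus the Gram matrix
\[
  \Bigl(\int U^{p^*-2}Z_iZ_j\,dx\Bigr)_{i,j}
\]
of a basis of \(T_U\mathcal M_{\rm aff}\). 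This matrix is positive definite because the \(Z_i\) are linearly independent. Accordingly I restrict \(A\) to symmetric positive-definite matrices \(\exp(B)\) with \(B=B^T\), \(\operatorname{tr}B=0\), which is possible by polar decomposition and radiality of \(U\). This makes the parametrization a local diffeomorphism onto a neighborhood of \(U\) in \(\mathcal M_{\rm aff}\).

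A version of the implicit function theorem requiring only continuity in \(w\) and \(C^1\) in \(\theta\) (via Brouwer/contraction with \(w\) as a parameter) then gives \(\eta_0>0\) and \(\theta(w)\) with \(G_w(\theta(w))=0\) and \(|\theta(w)-\theta_0|\le C\|\nabla(w-U)\|_{L^p}\). The orthogonality is required only against \(T_V\mathcal M_{\rm aff}\), which is the pushforward of the basis \(Z_i\) at \(\theta\); this matches because the basis is defined as \(\partial_{\theta_i}V_\theta\), spanning \(T_{V_\theta}\mathcal M_{\rm aff}\). Finally,
\[
  \|\nabla(w-V)\|_{L^p}\le\|\nabla(w-U)\|_{L^p}+\|\nabla(U-V_{\theta(w)})\|_{L^p},
\]
and the second term tends to \(0\) with \(\|\nabla(w-U)\|_{L^p}\) by continuity of \(\theta\mapsto V_\theta\). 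This yields \(\omega\).

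\textbf{Main obstacle.} The main difficulty is the regularity of \(\theta\mapsto|V_\theta|^{p^*-2}Z_\theta\) in \(L^{(p^*)'}\) when \(p^*<2\), because \(|V|^{p^*-2}\) is singular where \(V=0\). Here \(V_\theta\) is a nonzero multiple of a positive bubble, so \(|V_\theta|^{p^*-2}=|c|^{p^*-2}U_\theta^{p^*-2}\), which is smooth and positive with explicit decay. Differentiability then follows from dominated convergence using \(|\partial_\theta(U_\theta^{p^*-2}Z_\theta)|\le C U_\theta^{p^*-1}\) uniformly on \(K\), and this bound lies in \(L^{(p^*)'}\). I would verify this bound first.
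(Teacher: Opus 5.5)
Your argument is correct, but it takes a different route from the paper.

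\textbf{The paper's route.} The paper never solves the orthogonality equations directly. It minimizes, over a small closed ball of rotation-free affine parameters, the functional \(V\mapsto\frac1{p^*}\int_{\mathbb R^n}|V|^{p^*}\,dx-\frac1{p^*-1}\int_{\mathbb R^n}|V|^{p^*-2}Vw\,dx\). Its derivative along a curve in \(\mathcal M_{\rm aff}\) with velocity \(Z\) is exactly \(\int_{\mathbb R^n}|V|^{p^*-2}(V-w)Z\,dx\). Existence of a minimizer then needs only continuity and compactness. Interiority and convergence to \(U\) follow as in \cite[Lemma~4.1]{FZ22}: for \(w=U\), the equality case of H\"older's inequality makes \(U\) the unique minimizer. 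Orthogonality to all of \(T_V\mathcal M_{\rm aff}\) is automatic, because every tangent vector is a curve velocity.

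\textbf{What your route costs.} Your implicit-function argument needs more input.
First, you need \(C^1\) dependence of \(\theta\mapsto|V_\theta|^{p^*-2}\partial_{\theta_i}V_\theta\) in \(L^{(p^*)'}\). Your bound \(|\partial_\theta(U_\theta^{p^*-2}Z_\theta)|\le CU_\theta^{p^*-1}\) does this, using \(|\partial_\theta U_\theta|+|\partial_\theta Z_\theta|\le CU_\theta\) and \(U\in C^2\), which holds since \(p'>2\).
Second, you need invertibility of the Gram matrix.
Third, you need that the \(\partial_{\theta_i}V_\theta\) still span \(T_{V_\theta}\mathcal M_{\rm aff}\) for \(\theta\ne\theta_0\); you only assert this. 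It holds because:
\begin{itemize}
\item they are \(\dim T_U\mathcal M_{\rm aff}\) vectors of that space;
\item linear independence is an open condition;
\item the invertible maps \(u\mapsto bT_{\lambda A,x_0}^{-1}u\) carry \(T_U\mathcal M_{\rm aff}\) onto \(T_V\mathcal M_{\rm aff}\) for \(V=bT_{\lambda A,x_0}^{-1}U\).
\end{itemize}

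\textbf{What your route buys.} You get more than the lemma states: local uniqueness of \(V\), and a linear modulus \(\omega(t)=Ct\). The latter holds since \(G_w\) is affine in \(w\) with \(|G_w(\theta_0)|\le C\|\nabla(w-U)\|_{L^p(\mathbb R^n)}\).

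\textbf{A side remark on \(F_w\).} Your first candidate \(F_w\) does not have the orthogonality as its exact Euler--Lagrange equation. Differentiating the weight adds the term \((p^*-2)\int_{\mathbb R^n}|V|^{p^*-4}VZ(w-V)^2\,dx\). The paper's functional avoids this by pairing \(\frac1{p^*}|V|^{p^*}\) with \(\frac1{p^*-1}|V|^{p^*-2}Vw\). Since you discard \(F_w\), your proof is unaffected.
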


\begin{proof}
  Near the nonzero extremal \(U\), use the local parameterization of the affine extremal cone from the
  proof of \cite[Lemma~4.2]{FLZ26B}; rotational parameters are omitted because \(U\) is radial. For
  \(w\) close to \(U\), minimize over a sufficiently small closed parameter ball the functional
  \[
    V\mapsto \frac1{p^*}\int_{\mathbb R^n}|V|^{p^*}\,dx - \frac1{p^*-1}\int_{\mathbb R^n}|V|^{p^*-2}Vw\,dx.
  \]
  The argument of \cite[Lemma~4.1]{FZ22} shows that, after decreasing \(\eta_0\), the minimizer is an
  interior point, converges to \(U\) as \(w\to U\), and satisfies the stated modulus estimate.
  Differentiating the functional along any smooth curve in \(\mathcal M_{\rm aff}\) through the
  minimizer gives \eqref{eq:modulation-orthogonality-low-p}. The local parameterization and the
  minimization argument above remain valid for \(1<p<2\); no assumption \(p\ge2\) is needed.
\end{proof}

\subsection{Proof of Theorem~\ref{thm:intro-main-affine-p<2}}

\begin{proof}[Proof of Theorem~\ref{thm:intro-main-affine-p<2}]
  Let \(0\ne u\in\dot W^{1,p}(\mathbb R^n)\) and \(d=d_{\rm aff}(u,\mathcal M_{\rm aff})\). We first
  prove
  \begin{equation}
    \label{eq:local-normalized-affine-low-p}
    d\le d_0 \quad\implies\quad \frac{\mathcal E_{\rm aff,p}(u)} {S_{n,p}\|u\|_{L^{p^*}(\mathbb R^n)}}-1 \ge c_0d^2,
  \end{equation}
  for some \(d_0,c_0>0\) depending only on \(n,p\). The case \(d=0\) follows from
  \eqref{eq:affine-Sobolev}. For \(0<d\le d_0\), \eqref{eq:affine-distance} gives \(a\in\mathbb R\),
  \(A\in SL(n)\), \(\lambda>0\), and \(x_0\in\mathbb R^n\) such that, with \(T_0=T_{\lambda A,x_0}\),
  \begin{equation}
    \label{eq:bdd-p}
    \|\nabla(T_0u-aU)\|_{L^p(\mathbb R^n)} \le 2d\|\nabla T_0u\|_{L^p(\mathbb R^n)}.
  \end{equation}
  By \eqref{eq:bdd-p} with \(d_0<1/4\), \(a\ne0\) and set \(w=a^{-1}T_0u\), we have
  \[
    \|\nabla(w-U)\|_{L^p(\mathbb R^n)} \le \frac{2d}{1-2d}\|\nabla U\|_{L^p(\mathbb R^n)}.
  \]
  For small \(d_0\), Lemma~\ref{lem:affine-modulation-low-p} gives \(V\in\mathcal M_{\rm aff}\)
  satisfying \eqref{eq:modulation-orthogonality-low-p}. Write \(V=bT_{\mu B,y}^{-1}U\), where \(b\ne0\)
  and \(\mu B\) is close to the identity. Define \(h\) by
  \begin{equation}
    \label{eq:def-modulated-remainder-low-p}
    b^{-1}T_{\mu B,y}w=U+h.
  \end{equation}
  For \(V\) near \(U\),
  \[
    \|\nabla h\|_{L^p(\mathbb R^n)} \le C\|\nabla(w-V)\|_{L^p(\mathbb R^n)} \le C\,\omega\bigl(\|\nabla(w-U)\|_{L^p(\mathbb R^n)}\bigr).
  \]
  For every \(Z\in T_U\mathcal M_{\rm aff}\), \(bT_{\mu B,y}^{-1}Z\in T_V\mathcal M_{\rm aff}\).
  Changing variables in \eqref{eq:modulation-orthogonality-low-p} gives \(h\perp T_U\mathcal M_{\rm
  aff}\).

  For \(h\ne0\), write \(h=\varepsilon\varphi\), where \(\varepsilon=\|\nabla h\|_{L^p(\mathbb R^n)}\).
  Then \(\|\nabla\varphi\|_{L^p(\mathbb R^n)}=1\) and \(\varphi\perp T_U\mathcal M_{\rm aff}\). For
  small \(d_0\), by \eqref{eq:local-lower-low-p} when \(1<p\le2n/(n+2)\) and
  \eqref{eq:local-lower-high-p} when \(2n/(n+2)<p<2\), \(\delta_{\rm aff}(U+h)\ge c\varepsilon^2
  =c\|\nabla h\|_{L^p(\mathbb R^n)}^2\).

  By \eqref{eq:affine-distance}, affine invariance, and \eqref{eq:def-modulated-remainder-low-p},
  \begin{equation}
    \label{eq:distance-controlled-by-h-low-p}
    d \le \frac{\|\nabla h\|_{L^p(\mathbb R^n)}} {\|\nabla(U+h)\|_{L^p(\mathbb R^n)}} \le C\|\nabla h\|_{L^p(\mathbb R^n)}.
  \end{equation}
  Since \(\|U+h\|_{L^{p^*}(\mathbb R^n)}\) and \(\mathcal E_{\rm aff,p}(U+h)\) are bounded above and
  away from zero for small \(d_0\),
  \begin{equation}
    \label{sec4edit:normalized-deficit-bound}
    \begin{aligned}
      \frac{\mathcal E_{\rm aff,p}(U+h)} {S_{n,p}\|U+h\|_{L^{p^*}(\mathbb R^n)}}-1 &\ge c\left[ \left( \frac{\mathcal E_{\rm aff,p}(U+h)} {S_{n,p}\|U+h\|_{L^{p^*}(\mathbb R^n)}} \right)^p-1 \right] \\
      &= c\, \frac{\delta_{\rm aff}(U+h)} {S_{n,p}^p\|U+h\|_{L^{p^*}(\mathbb R^n)}^p} \ge c\|\nabla h\|_{L^p(\mathbb R^n)}^2.
    \end{aligned}
  \end{equation}
  Then \eqref{eq:distance-controlled-by-h-low-p} and \eqref{sec4edit:normalized-deficit-bound} imply
  \eqref{eq:local-normalized-affine-low-p}.

  If \eqref{eq:intro-main-affine-p<2} fails, there are \(0\ne u_k\in\dot W^{1,p}(\mathbb R^n)\) such
  that
  \begin{equation}
    \label{eq:global-failure-low-p}
    \frac{\mathcal E_{\rm aff,p}(u_k)} {S_{n,p}\|u_k\|_{L^{p^*}(\mathbb R^n)}}-1 < \frac1k d_{\rm aff}(u_k,\mathcal M_{\rm aff})^2.
  \end{equation}
  By homogeneity, assume \(\|u_k\|_{L^{p^*}(\mathbb R^n)}=1\). Since \(d_{\rm aff}(u_k,\mathcal M_{\rm
  aff})\le1\), \eqref{eq:global-failure-low-p} gives \(\delta_{\rm aff}(u_k)\to0\).
  Proposition~\ref{prop:qualitative-affine-compactness-low-p} and \eqref{eq:affine-distance} give
  \(d_{\rm aff}(u_k,\mathcal M_{\rm aff})\to0\). For large \(k\),
  \eqref{eq:local-normalized-affine-low-p} contradicts \eqref{eq:global-failure-low-p}.

  For optimality, choose a nonzero \(\phi\in C_c^\infty(\{1<|x|<2\})\) satisfying \(\phi(-x)=-\phi(x)\),
  and set \(u_\varepsilon=U+\varepsilon\phi\). By \eqref{eq:affine-deficit-expansion-low-p},
  \begin{equation}
    \label{eq:sharpness-normalized-deficit-low-p}
    \frac{\mathcal E_{\rm aff,p}(u_\varepsilon)} {S_{n,p}\|u_\varepsilon\|_{L^{p^*}(\mathbb R^n)}}-1 \le C\varepsilon^2.
  \end{equation}

  The triangle and Sobolev inequalities in \eqref{eq:affine-distance} give
  \[
    d_{\rm aff}(u,\mathcal M_{\rm aff}) \ge \frac{\inf_{V\in\mathcal M_{\rm aff}} \|u-V\|_{L^{p^*}(\mathbb R^n)}}{3\|u\|_{L^{p^*}(\mathbb R^n)}}.
  \]
  We show that \(inf_{V\in\mathcal M_{\rm aff}} \|u_\varepsilon-V\|_{L^{p^*}(\mathbb R^n)}\ge
  c\varepsilon\). Otherwise, there exist \(\varepsilon_j\downarrow0\) and \(V_j\in\mathcal M_{\rm aff}\)
  such that \(\|u_{\varepsilon_j}-V_j\|_{L^{p^*}(\mathbb R^n)}=o(\varepsilon_j)\). Each \(V_j\) is
  symmetric about some point \(y_j\), so
  \[
    \|u_{\varepsilon_j} -u_{\varepsilon_j}(2y_j-\cdot)\|_{L^{p^*}(\mathbb R^n)} =o(\varepsilon_j).
  \]
  Since \(u_{\varepsilon_j}\to U\) and \(U\) is strictly radially decreasing, this implies \(y_j\to0\).
  On \(3<|x|<4\), both perturbation terms vanish for large \(j\). Taylor's formula and \(U'<0\)
  therefore yield
  \[
    |y_j| \le C\|U-U(2y_j-\cdot)\|_{L^{p^*}(\{3<|x|<4\})} =o(\varepsilon_j).
  \]
  Using \(U\in W^{1,p^*}\) and the oddness of \(\phi\), we obtain
  \[
    u_{\varepsilon_j} -u_{\varepsilon_j}(2y_j-\cdot) =2\varepsilon_j\phi+o(\varepsilon_j) \qquad\text{in }L^{p^*},
  \]
  a contradiction. Hence
  \begin{equation}
    \label{eq:sharpness-distance-low-p}
    d_{\rm aff}(u_\varepsilon,\mathcal M_{\rm aff}) \ge c\varepsilon.
  \end{equation}
  If \eqref{eq:intro-main-affine-p<2} held with an exponent \(0<q<2\), these estimates would give
  \(c\varepsilon^q\le C\varepsilon^2\), which is impossible as \(\varepsilon\downarrow0\).
\end{proof}

\appendix

\section{\texorpdfstring{The affine kernel in dimension \(n=2\)}{The affine kernel in dimension n=2}}
\label{app:kernel-affine-Hessian-n=2}

The case \(n\ge3\) was treated in Proposition~\ref{prop:affine-kernel-n-ge3-low-p}. We now consider
\(n=2\). We use Fourier series on \(\mathbb S^1\); see also \cite[Section~5.3]{FLZ26A} for the affine
fractional Sobolev inequality. The radial identities of \cite[Lemmas~5.1--5.3]{FLZ26B}  remain valid for
\(1<p<2\).

\begin{proposition}
  \label{prop:affine-kernel-n=2}
  Let \(n=2\) and \(1<p<2\). Then
  \begin{equation}
    \label{eq:affine-kernel-n=2}
    \ker_{\mathcal H_U}Q_{\rm aff,p} = \operatorname{span} \left\{ U,\, Z_0,\, \partial_{x_1}U,\, \partial_{x_2}U,\, rU'(r)\cos(2\vartheta),\, rU'(r)\sin(2\vartheta) \right\} = T_U\mathcal M_{\rm aff},
  \end{equation}
  where \(Z_0=(2-p)U/p+rU'\).
\end{proposition}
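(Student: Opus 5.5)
We plan to decompose \(\phi\in C_{c,0}^\infty(\mathbb R^2)\) in Fourier modes in polar coordinates, \(\phi(r,\vartheta)=\sum_{k\ge0}\bigl(a_k(r)\cos k\vartheta+b_k(r)\sin k\vartheta\bigr)\), and to show that \(Q_{\rm aff,p}\) is diagonal in this decomposition. Once the kernel is identified on this dense class, it extends to \(\mathcal H_U\) by the continuity of \(Q_{\rm aff,p}\) established after \eqref{eq:Lxi-HU-bound}.

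\emph{Diagonal structure.} Since \(U\) is radial, \(Q_{\rm Sob,p}\) is invariant under rotations. Its principal part
\[
\int|\nabla U|^{p-2}\Bigl((p-1)(\partial_r\phi)^2+r^{-2}(\partial_\vartheta\phi)^2\Bigr)\,dx
\]
therefore splits over modes. The potential term and the term \(\bigl(\int U^{p^*-1}\phi\bigr)^2\) also split, and the latter sees only \(k=0\). For the remainder \(\mathcal R_p\), we compute \(L_\xi(\phi)\) with \(\xi=(\cos\beta,\sin\beta)\). Writing \(\partial_\xi U=U'(r)\cos(\vartheta-\beta)\), we get
\[
L_\xi(\phi)=p\int_0^\infty\!\int_0^{2\pi}|U'|^{p-2}U'\,|\cos(\vartheta-\beta)|^{p-2}\cos(\vartheta-\beta)\,\partial_\xi\phi\,r\,d\vartheta\,dr.
\]
Here \(\partial_\xi\phi=\cos(\vartheta-\beta)\partial_r\phi-\sin(\vartheta-\beta)r^{-1}\partial_\vartheta\phi\). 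The angular kernel \(|\cos|^{p-2}\cos\cdot\cos\) contains only even harmonics, and \(|\cos|^{p-2}\cos\cdot\sin\) is odd, so only even modes of \(\phi\) contribute. A mode \(k\) of \(\phi\) produces a function of \(\beta\) in the \(k\)-th harmonic. Consequently, \(\operatorname{Var}_\beta L_\xi(\phi)\) is a sum over even \(k\ge2\) of terms involving \(\ell_k(\phi)=\int_0^\infty |U'|^{p-1}\bigl(c_k r a_k'+d_k k a_k\bigr)\,dr\), and similarly for \(b_k\). The constants \(c_k,d_k\) are explicit Fourier coefficients of \(|\cos|^{p}\) and \(|\cos|^{p-2}\cos\sin\). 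The \(k=0\) mode gives a \(\beta\)-independent \(L_\xi\), so it contributes nothing to the variance.

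\emph{Modes \(k=0,1\) and \(k\ge3\) odd.} For these modes \(\mathcal R_p=0\), so \(Q_{\rm aff,p}=Q_{\rm Sob,p}\). We would invoke the known nondegeneracy of the \(p\)-Sobolev linearization (Figalli--Zhang, with the radial identities of \cite[Lemmas~5.1--5.3]{FLZ26B}). This yields the kernel \(\operatorname{span}\{U,Z_0\}\) for \(k=0\), \(\partial_{x_i}U\) for \(k=1\), and nothing for \(k\ge2\). The reason is that the radial operator for mode \(k\) is \(\mathcal L_0+k^2 W(r)\) with \(W>0\), and for \(k\ge2\) it is strictly positive, by comparison with the \(k=1\) level, whose ground state is \(U'\).

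\emph{Even modes \(k\ge2\).} This is the main obstacle. On mode \(k\) the form is a one-dimensional quadratic form \(q_k(a)-\mu_k\,\ell_k(a)^2\), a positive Schrödinger-type form minus a rank-one term. Its kernel is nontrivial exactly when \(\mu_k\langle \ell_k, q_k^{-1}\ell_k\rangle=1\). We would solve \(q_k g=\ell_k\) explicitly using the radial identities; the natural candidate is \(g\) proportional to \(rU'\). For \(k=2\), \(rU'\cos2\vartheta\) is the tangent direction \(x\cdot B\nabla U\), so the equality must hold there, and it does, by the affine invariance of \(\mathcal E_{\rm aff,p}\) and the vanishing of the second variation along \(\mathcal M_{\rm aff}\). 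This gives \(\nu_2=1\) in the notation of \cite{FLZ26B}, with kernel \(\operatorname{span}\{rU'\cos2\vartheta, rU'\sin2\vartheta\}\). For \(k\ge4\) even, we expect the resulting ratio to be \(\nu_k=\frac{\text{(Fourier coefficient ratio at }k)}{\text{(at }2)}\cdot\frac{q_2\text{-level}}{q_k\text{-level}}\), and we must verify that \(\nu_k<1\). Two facts should give this: the Fourier coefficients of \(|\cos\vartheta|^p\) decay in magnitude in \(k\) for \(1<p<2\), and the term \(k^2W\) in \(q_k\) increases. Both would be checked explicitly via Beta-function formulas, \(\widehat{|\cos|^p}(k)\propto \Gamma(p+1)/\bigl(\Gamma(1+\tfrac{p+k}{2})\Gamma(1+\tfrac{p-k}{2})\bigr)\), which show monotone decrease in \(|k|\). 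The hardest part is carrying out this computation carefully.
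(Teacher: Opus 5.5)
Your overall structure matches the paper's. You use Fourier modes on \(\mathbb S^1\) and the Sobolev nondegeneracy on modes \(0\), \(1\) and all odd modes, where \(\mathcal R_p\) drops out because \(d_{\ell,p}=0\). On even modes you have a rank-one perturbation \(Q^{(\ell)}_{\rm Sob,p}-\beta_{\ell,p}R_\ell^2\). Affine invariance fixes the ratio to \(1\) and gives the kernel at \(\ell=2\), and a Gamma-function computation is meant to handle \(\ell\ge4\).

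\textbf{The gap.} The gap is in the step you call the main obstacle, and your proposed mechanism for it points the wrong way. The functional on mode \(\ell\) (your \(k\)) carries its own factor \(\ell^2\). Since \(|\cos\vartheta|^{p-2}\cos\vartheta\sin\vartheta=-\frac1p\frac{d}{d\vartheta}|\cos\vartheta|^p\), your \(d_\ell\) equals \(\pm\frac{\ell}{p}c_\ell\). Hence your \(\ell_k\) is a multiple of \(c_\ell R_\ell\), where \(R_\ell(f)=2\pi\int_0^\infty|U'|^{p-2}U'\bigl(f'+\frac{\ell^2}{p}\frac fr\bigr)r\,dr\).

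Moreover, \(h=rU'\) represents \(R_\ell\) with respect to \(Q^{(\ell)}_{\rm Sob,p}\) for \emph{every} \(\ell\ge2\), not only for \(\ell=2\). The local radial linearized operator annihilates \(Z_0\) and sends \(U\) to \((p-p^*)\Lambda U^{p^*-1}\), so it sends \(h\) to \(p\Lambda U^{p^*-1}=-p\Delta_pU\). The angular term \(\ell^2|U'|^{p-2}h/r^2\) produces exactly the \(\frac{\ell^2}{p}\frac fr\) part. Thus \(R_\ell(f)=\frac1pQ^{(\ell)}_{\rm Sob,p}(h,f)\), and
\[
\|R_\ell\|^2_{(Q^{(\ell)}_{\rm Sob,p})^{-1}}=\frac1{p^2}Q^{(\ell)}_{\rm Sob,p}(h)=\frac{2\pi}{p^2}\bigl(\ell^2-(2-p)\bigr)I,
\]
which \emph{increases} with \(\ell\). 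The larger angular term in \(q_k\) is more than offset by the growth of the functional.

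\textbf{Consequence for \(\nu_k\).} Write \(\nu_k\) for the ratio on mode \(\ell=2k\), as the paper does, so that \(\nu_1=1\). Then
\[
\nu_k=\frac{(2k)^2-(2-p)}{p+2}\left(\frac{d_{2k,p}}{d_{2,p}}\right)^2 .
\]
The form-dependent factor is \(\ge1\) and grows, which is the reverse of your \(q_2/q_k\). Also, the coefficient ratio enters squared.

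So the monotone decrease of \(|d_{2k,p}|\) in \(k\) does not by itself give \(\nu_k<1\). You need the squared coefficients, which decay like \(k^{-2p-2}\), to beat the quadratic growth of the dual norm. The paper settles this with the explicit ratio
\[
\frac{\nu_{k+1}}{\nu_k}=\frac{(2k+2)^2-(2-p)}{(2k)^2-(2-p)}\left(\frac{k-\frac p2}{k+1+\frac p2}\right)^2,\qquad 1-\frac{\nu_{k+1}}{\nu_k}=\frac{4(2k+1)(4k^2p+4kp-p-2)}{(2k+p+2)^2(4k^2+p-2)}>0 .
\]
Together with \(\nu_1=1\), this gives \(\nu_k\le\nu_2=\frac{p+14}{p+2}\bigl(\frac{p-2}{p+4}\bigr)^2<1\) for \(k\ge2\). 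Hence \(Q^{(2k)}_{\rm aff,p}\ge(1-\nu_2)Q^{(2k)}_{\rm Sob,p}\).

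Without the explicit dual-norm identity above, the two facts you list do not close the argument.
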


\begin{proof}
  Write \(x=r\theta\), \(\theta=(\cos\vartheta,\sin\vartheta)\). The orthonormal Fourier basis for
  \(d\vartheta/(2\pi)\) is \(Y_{0,1}=1\) and
  \[
    Y_{\ell,1}(\vartheta)=\sqrt2\cos(\ell\vartheta), \qquad Y_{\ell,2}(\vartheta)=\sqrt2\sin(\ell\vartheta), \qquad \ell\ge1.
  \]
  Then \(-\Delta_{\mathbb S^1}Y_{\ell,m}=\ell^2Y_{\ell,m}\). For a finite Fourier expansion, write
  \begin{equation}
    \label{eq:spherical-decomposition-n=2}
    \phi(r,\theta) = f_{0,1}(r) + \sum_{\ell=1}^{\infty} \sum_{m=1}^{2} f_{\ell,m}(r)Y_{\ell,m}(\theta).
  \end{equation}

  Since \(U\) is radial, \(Q_{\rm Sob,p}\) is diagonal in \eqref{eq:spherical-decomposition-n=2}. Its
  radial part is
  \[
    \begin{aligned}
      Q_{\rm Sob,p}^{(0)}(f) &= 2\pi(p-1) \int_0^\infty |U'|^{p-2}|f'|^2r\,dr - 2\pi(p^*-1)\Lambda \int_0^\infty U^{p^*-2}f^2r\,dr \\
      &\quad+ 2\pi(p^*-p)\Lambda \left( \int_0^\infty U^{p^*}r\,dr \right)^{-1} \left( \int_0^\infty U^{p^*-1}fr\,dr \right)^2.
    \end{aligned}
  \]
  For every \(\ell\ge1\),
  \[
    \begin{aligned}
      Q_{\rm Sob,p}^{(\ell)}(f) &= 2\pi \int_0^\infty |U'|^{p-2} \left( (p-1)|f'|^2 + \ell^2\frac{f^2}{r^2} \right)r\,dr \\
      &\quad- 2\pi(p^*-1)\Lambda \int_0^\infty U^{p^*-2}f^2r\,dr.
    \end{aligned}
  \]

  By the nondegeneracy theorem of Pistoia--Vaira~\cite{PV21},
  \begin{equation}
    \label{sec4edit:classical-kernels}
    \ker Q_{\rm Sob,p}^{(0)} = \operatorname{span}\{U,Z_0\}, \qquad \ker Q_{\rm Sob,p}^{(1)} = \operatorname{span}\{U'\}.
  \end{equation}
  Also,
  \[
    Q_{\rm Sob,p}^{(\ell)}(f)>0 \qquad \text{for every }f\ne0 \text{ and every }\ell\ge2.
  \]
  See also Figalli--Neumayer~\cite[Proposition~3.1]{FN19}, Figalli--Zhang~\cite{FZ22}, and
  \cite[Subsection~5.2]{FLZ26B}.

  For \(\xi=(\cos\alpha,\sin\alpha)\), set
  \[
    d_{\ell,p} = \frac1{2\pi} \int_0^{2\pi} |\cos\vartheta|^p\cos(\ell\vartheta)\,d\vartheta.
  \]
  The Funk--Hecke formula on \(\mathbb S^1\) is
  \begin{equation}
    \label{eq:F-identity}
    \frac1{2\pi} \int_0^{2\pi} |\cos(\vartheta-\alpha)|^p Y_{\ell,m}(\vartheta)\,d\vartheta = d_{\ell,p}Y_{\ell,m}(\alpha), \qquad m=1,2;
  \end{equation}
  see \cite[Theorem~1.2.9 and Section~1.6.1]{DX13}.

  By the \(\pi\)-periodicity of \(|\cos\vartheta|^p\), \(d_{\ell,p}=0\) for odd \(\ell\). For
  \(\ell=2k\), the beta integral gives
  \begin{equation}
    \label{eq:even-FH-coefficients-n=2}
    \frac{d_{2k,p}}{d_{0,p}} = \frac{ \Gamma\left(\frac p2+1\right)^2 }{ \Gamma\left(\frac p2+k+1\right) \Gamma\left(\frac p2-k+1\right) } = \frac{ (-1)^k\left(-\frac p2\right)_k }{ \left(1+\frac p2\right)_k }.
  \end{equation}

  For \(\ell\ge1\), set
  \[
    R_\ell(f) = 2\pi \int_0^\infty |U'|^{p-2}U' \left( f' + \frac{\ell^2}{p}\frac fr \right)r\,dr, \qquad \beta_{\ell,p} = \alpha_{2,p} \frac{\frac2p+1}{pA_0} p^2d_{\ell,p}^2.
  \]
  By \eqref{eq:F-identity} and \cite[Subsections~5.3--5.4]{FLZ26B},
  \begin{equation}
    \label{sec4edit:affine-Fourier-decomposition}
    \begin{aligned}
      Q_{\rm aff,p}(\phi) &= Q_{\rm Sob,p}^{(0)}(f_{0,1}) + \sum_{\ell=1}^{\infty} \sum_{m=1}^{2} \left[ Q_{\rm Sob,p}^{(\ell)}(f_{\ell,m}) - \beta_{\ell,p}R_\ell(f_{\ell,m})^2 \right].
    \end{aligned}
  \end{equation}
  In particular, \(Q_{\rm aff,p}^{(\ell)}=Q_{\rm Sob,p}^{(\ell)}\) for every odd \(\ell\).

  Set \(h(r)=rU'(r)\) and \(I=\int_0^\infty|U'|^p r\,dr\). For \(1<p<2\) and \(\ell\ge2\), the radial
  identities in \cite[Lemmas~5.1--5.3]{FLZ26B} give \(R_\ell(f)=\frac1p Q_{\rm Sob,p}^{(\ell)}(h,f)\)
  and
  \begin{equation}
    \label{eq:dual-norm-R-n=2}
    \begin{aligned}
      \|R_\ell\|^2_{(Q_{\rm Sob,p}^{(\ell)})^{-1}} &= \sup_{f\ne0} \frac{R_\ell(f)^2} {Q_{\rm Sob,p}^{(\ell)}(f)} = \frac{2\pi}{p^2} \left( \ell^2-(2-p) \right)I.
    \end{aligned}
  \end{equation}
  Affine invariance gives \(\beta_{2,p}=\frac{p^2}{2\pi(p+2)I}\). Hence
  \[
    \begin{aligned}
      Q_{\rm aff,p}^{(2)}(f) &= Q_{\rm Sob,p}^{(2)}(f) - \beta_{2,p}R_2(f)^2 = Q_{\rm Sob,p}^{(2)}(f) - \frac{ \left| Q_{\rm Sob,p}^{(2)}(f,h) \right|^2 }{ Q_{\rm Sob,p}^{(2)}(h) } \ge0,
    \end{aligned}
  \]
  with equality if and only if \(f\in\operatorname{span}\{h\}\).

  For \(B=B^T\) with \(\operatorname{tr}B=0\), \(x\cdot B\nabla U=rU'(r)\,\theta\cdot B\theta\). Since
  \(\theta\cdot B\theta\in\mathcal H_2(\mathbb S^1):
  =\operatorname{span}\{\cos(2\vartheta),\sin(2\vartheta)\}\) is the space of spherical harmonics of
  degree two on \(\mathbb S^1\),
  \begin{equation}
    \label{sec4edit:degree-two-kernel}
    \ker Q_{\rm aff,p}^{(2)} = \operatorname{span} \left\{ rU'(r)\cos(2\vartheta),\, rU'(r)\sin(2\vartheta) \right\}.
  \end{equation}

  Set \(\nu_k = \beta_{2k,p} \|R_{2k}\|^2_{(Q_{\rm Sob,p}^{(2k)})^{-1}}\), \(k\ge1\). Then \(\nu_1=1\),
  and \eqref{eq:even-FH-coefficients-n=2} and \eqref{eq:dual-norm-R-n=2} give
  \[
    \nu_k = \frac{ (2k)^2-(2-p) }{ p+2 } \prod_{j=1}^{k-1} \left( \frac{j-\frac p2} {j+1+\frac p2} \right)^2
  \]
  and hence
  \[
    \frac{\nu_{k+1}}{\nu_k} = \frac{ (2k+2)^2-(2-p) }{ (2k)^2-(2-p) } \left( \frac{k-\frac p2} {k+1+\frac p2} \right)^2.
  \]
  For \(1<p<2\) and \(k\ge1\),
  \[
    1-\frac{\nu_{k+1}}{\nu_k} = \frac{ 4(2k+1) \left( 4k^2p+4kp-p-2 \right) }{ (2k+p+2)^2 (4k^2+p-2) } >0.
  \]
  Thus \(\nu_{k+1}<\nu_k\), with \(\nu_2=\frac{p+14}{p+2}\left(\frac{p-2}{p+4}\right)^2<1\). Let
  \(\eta_{2,p}=1-\nu_2>0\). Then \(\nu_k\le1-\eta_{2,p}\) for \(k\ge2\), and \eqref{eq:dual-norm-R-n=2}
  gives \(\beta_{2k,p}R_{2k}(f)^2\le\nu_kQ_{\rm Sob,p}^{(2k)}(f)\). Hence
  \begin{equation}
    \label{sec4edit:higher-even-positive}
    Q_{\rm aff,p}^{(2k)}(f) \ge \eta_{2,p}Q_{\rm Sob,p}^{(2k)}(f), \qquad k\ge2.
  \end{equation}
  Therefore, by \eqref{sec4edit:classical-kernels}, \eqref{sec4edit:affine-Fourier-decomposition},
  \eqref{sec4edit:degree-two-kernel}, and \eqref{sec4edit:higher-even-positive}, we get
  \eqref{eq:affine-kernel-n=2}.
\end{proof}

\noindent\textbf{Conflict of interest:} Authors state no conflict of interest.

\noindent\textbf{Data Availability Statement:} Data sharing is not applicable to this article as no
datasets were generated or analysed during the current study.

\noindent{\bf Acknowledgement.}
G-D.~Li was supported by NSFC (No.12561019). The authors acknowledge the use of AI tools. All mathematical arguments
and proofs in the final manuscript were checked and written by the authors.

\providecommand{\href}[2]{#2}
\providecommand{\arxiv}[1]{\href{http://arxiv.org/abs/#1}{arXiv:#1}}
\providecommand{\url}[1]{\texttt{#1}}
\providecommand{\urlprefix}{URL }

\end{document}